\documentclass[11pt]{amsart}
\usepackage[T1]{fontenc}
\usepackage[utf8]{inputenc}
\usepackage{lmodern}
\usepackage{microtype}
\usepackage{xcolor}
\usepackage{amsmath,amssymb,amsthm,mathtools}
\usepackage{enumitem}
\usepackage[margin=1in]{geometry}
\usepackage{hyperref}
\hypersetup{colorlinks=true,linkcolor=blue,citecolor=blue,urlcolor=blue}
\usepackage{mathrsfs}

\theoremstyle{plain}
\newtheorem{theorem}{Theorem}[section]
\newtheorem{lemma}[theorem]{Lemma}
\newtheorem{proposition}[theorem]{Proposition}
\newtheorem{corollary}[theorem]{Corollary}
\theoremstyle{remark}
\newtheorem{remark}[theorem]{Remark}

\DeclareMathOperator{\rank}{rank}
\DeclareMathOperator{\diag}{diag}
\newcommand{\FF}{\mathbb{F}}
\numberwithin{equation}{section}

\newcommand{\PP}{\mathscr{P}}
\newcommand{\J}{\mathcal{J}}
\DeclareMathOperator{\tr}{tr}
\DeclareMathOperator{\Span}{span}
\DeclareMathOperator{\End}{End}
\DeclareMathOperator{\im}{im}
\DeclareMathOperator{\rad}{rad}
\newcommand{\ceil}[1]{\left\lceil #1\right\rceil}

\hypersetup{pdftitle={Idempotent words in finite-dimensional simple Jordan algebras},pdfauthor={Ilja Gogic and Mateo Tomasevic},pdfsubject={Idempotent words in simple Jordan algebras}}

\title[Idempotent words in simple Jordan algebras]{Idempotent words in finite-dimensional simple Jordan algebras}

\author{Ilja Gogi\'{c}}
\address{I.~Gogi\'c, University of Zagreb Faculty of Science, Department of Mathematics, Bijeni\v{c}ka 30, 10000 Zagreb, Croatia}
\email{ilja@math.hr, ilja.gogic@math.pmf.unizg.hr}

\author{Mateo Toma\v{s}evi\'{c}}
\address{M.~Toma\v{s}evi\'c, University of Zagreb Faculty of Science, Department of Mathematics, Bijeni\v{c}ka 30, 10000 Zagreb, Croatia}
\email{mateo.tomasevic@math.hr, mateo.tomasevic@math.pmf.unizg.hr}

\date{\today}

\subjclass[2020]{Primary 17C27; Secondary 17C20, 17C55.}

\keywords{finite-dimensional simple Jordan algebras, idempotents, idempotent words, word length, Jordan matrix algebras}

\begin{document}
	
	\begin{abstract}
		We study finite left-nested Jordan products of idempotents in finite-dimensional simple Jordan algebras over a field $\mathbb{F}$ of characteristic different from $2$. Over an algebraically closed field, the structure theorem reduces the problem to the scalar algebra, spin factors, the full matrix Jordan algebras $M_n(\mathbb{F})^+$, the symmetric matrix algebras $H_n(\mathbb{F})$, the symplectic Hermitian algebras $H_{2n}(\mathbb{F},\mathrm{sp})$, and the split Albert algebra. We determine exactly which elements are representable: except in $H_2(\mathbb{F})$, these are all nonscalar elements together with $0$ and $1$, whereas in $H_2(\mathbb{F})$ the nonzero square-zero elements are also not representable. We also study the minimal word length of representable elements. For $M_n(\mathbb{F})^+$ over an arbitrary field, the maximal word length grows logarithmically in $n$, and for $n=2$ we determine the exact length of every nonscalar matrix. The same logarithmic growth holds for $H_{2n}(\mathbb{F},\mathrm{sp})$. The spin-factor and symmetric-matrix results hold over quadratically closed fields, whereas the full matrix, symplectic Hermitian, and split Albert results hold over arbitrary fields of characteristic different from $2$.
	\end{abstract}
	
	\maketitle
	
	\section{Introduction}
	
	Products of idempotent matrices have been studied since the classical work of Erd\H{o}s~\cite{Erdos1967}, who proved that every singular square matrix over a field $\FF$ is a product of idempotents. Ballantine~\cite{Ballantine1978} obtained the exact quantitative criterion: writing $M_n(\FF)$ for the algebra of all $n\times n$ matrices over $\FF$ and $I_n$ for its identity matrix, a matrix $A\in M_n(\FF)$ is a product of $k$ idempotent matrices, $k\ge1$, if and only if
	\[
	\rank(I_n-A)\le k\left(n-\rank A\right).
	\]
	Thus, for singular $A$, the minimum number of factors is determined by this inequality; in particular, every singular $n\times n$ matrix is a product of at most $n$ idempotents, and $n$ factors are necessary in general. Dawlings~\cite{Dawlings1981} later showed that the same bound holds with all factors of rank $n-1$.
	
	The problem has also been studied in broader algebraic and operator-theoretic settings. Jain and Leroy surveyed idempotent factorizations for matrices over special classes of rings, as well as factorizations of nonunits in regular rings; see~\cite{JainLeroy2019}. Cossu and Tringali developed an abstract factorization framework and applied it to recover and refine classical results on idempotent factorizations; see~\cite{CossuTringali2024}. In operator theory, Dawlings determined the subsemigroup generated by idempotents among the bounded linear operators on a separable Hilbert space~\cite{Dawlings1983}, while Arias, Corach and Maestripieri characterized products of two idempotent operators on Hilbert spaces~\cite{AriasCorachMaestripieri2017}. More recently, Jain, Leroy and Singh studied the semigroup generated by idempotents in the algebra of bounded operators on a Banach space~\cite{JainLeroySingh2026}.
	
	The associative results above suggest an analogous problem in nonassociative
	algebras, where the bracketing of the factors must also be specified. We study
	this problem for Jordan algebras. Let $\J$ be a unital Jordan algebra over a
	field $\FF$ of characteristic different from $2$, with product $\circ$ and
	unit $1$. For idempotents $e_1,\ldots,e_m\in\J$, we consider the finite
	left-nested product
	\begin{equation}\label{eq:word}
		(\cdots((e_1\circ e_2)\circ e_3)\circ\cdots)\circ e_m,
		\qquad e_i^2=e_i,\quad m\ge1,
	\end{equation}
	where for $m=1$ the product is simply $e_1$. We call \eqref{eq:word} a
	\emph{(left-nested) idempotent word} and set
	\[
	\PP(\J):=\{x\in\J:x\text{ is the value of an idempotent word}\}.
	\]
	For $x\in\PP(\J)$, define
	\[
	\ell_\J(x):=\min\{m\ge1:x\text{ has an idempotent word of length }m\},
	\qquad
	w(\J):=\sup_{x\in\PP(\J)}\ell_\J(x),
	\]
	where the supremum is allowed to be infinite. We call $\ell_\J(x)$ the
	\emph{word length} of $x$ in $\J$, and $w(\J)$ the \emph{idempotent-word width} of $\J$. When the context is clear, we simply say \emph{length} and \emph{width}, and omit the subscript on $\ell$. Throughout, idempotent words are understood to be left-nested as in \eqref{eq:word}. By commutativity of $\circ$, every right-nested product is a left-nested word of the same length after reversing the order of the factors. Arbitrary bracketings are much more flexible; we return to them in Corollary~\ref{cor:arbitrary-bracketing}.
	
	The associative and Jordan factorization problems differ in an essential way, even for two factors. If $\mathcal B$ is an associative $\FF$-algebra, we write $\mathcal B^+$ for the Jordan algebra obtained by equipping the same vector space with the normalized symmetrized product
	\begin{equation}\label{eq:symmetrized-product}
		x\circ y:=\frac12(xy+yx), \qquad x,y\in\mathcal B.
	\end{equation}
	Since $x\circ x=x^2$ for all $x \in \mathcal B$, the algebras $\mathcal B$ and $\mathcal B^+$ have the same idempotents. In $M_n(\FF)$, however, a product of idempotents can be invertible
	only if every factor is $I_n$, whereas in $M_2(\FF)^+$ the idempotents
	\[
	E:=\begin{bmatrix}1&1\\0&0\end{bmatrix},
	\qquad
	F:=\begin{bmatrix}1&0\\1&0\end{bmatrix}
	\]
	satisfy
	\[
	E\circ F=
	\frac12\begin{bmatrix}3&1\\1&1\end{bmatrix},
	\qquad
	\det(E\circ F)=\frac12.
	\]
	Thus their Jordan product is invertible but different from $I_2$. For related results on invertible anti-commutators of idempotents, see Khurana and Lam~\cite{KhuranaLam2018}.
	
	Related generation problems involving idempotents have also been studied. For $n\ge2$, the algebra $M_n(\FF)$ is additively spanned by Jordan products of pairs of idempotents; see~\cite[proof of Corollary~2.4]{BresarGodoy2023}. Jordan algebras generated by two idempotents were studied by Rowen and Segev~\cite{RowenSegev2017}. In contrast, we ask which elements occur as values of a single word \eqref{eq:word} and, when they do, how many idempotent factors are required.
	
	The restriction to idempotent factors is substantial. For $n\ge2$ and $0\ne A\in M_n(\FF)$, every $B\in M_n(\FF)$ can be obtained from $A$ by successive Jordan multiplications by matrices in $M_n(\FF)$; see~\cite[Proposition~2.2]{GT2025}. Moreover, every $\FF$-linear endomorphism of $M_n(\FF)$ is a finite composition of Jordan multiplication operators
	of the form $X\mapsto C\circ X$, where $C\in M_n(\FF)$; see \cite[Theorem~1.1]{GKT2026}. By contrast, in every nonzero unital Jordan algebra $\J$, restricting the factors to idempotents already yields the scalar obstruction
	\[
	\PP(\J)\cap\FF1=\{0,1\}.
	\]
	
	When $\FF$ is algebraically closed, the standard structure theorem reduces
	the finite-dimensional simple case to six families: the scalar algebra,
	spin factors, full and symmetric matrix Jordan algebras, symplectic
	Hermitian algebras, and the split Albert algebra. We recall these models and
	the structure theorem in Section~\ref{sec:preliminaries}. With one
	low-dimensional exception, our first main theorem shows that the scalar
	obstruction above is the only one. To state the result, for $n\ge1$ we write
	\[
	H_n(\FF):=\{A\in M_n(\FF):A^t=A\}
	\]
	for the Jordan subalgebra of symmetric matrices in $M_n(\FF)^+$, where $A^t$ denotes the transpose of $A$.
	
	\begin{theorem}\label{thm:classification}
		Let $\FF$ be algebraically closed with $\operatorname{char}\FF\ne2$, and let
		$\J$ be a finite-dimensional simple Jordan algebra over $\FF$.
		\begin{enumerate}[label=\textup{(\alph*)}]
			\item If $\J\not\cong H_2(\FF)$, then
			\[
			\PP(\J)=(\J\setminus\FF1)\cup\{0,1\}.
			\]
			\item For $H_2(\FF)$,
			\[
			\PP(H_2(\FF))=\{0,I_2\}\cup\{A\in H_2(\FF)\setminus\FF I_2:A^2\ne0\}.
			\]
			Moreover, $w(H_2(\FF))=4$, and an element
			$A\in\PP(H_2(\FF))$ has length four if and only if there exists $N\in H_2(\FF)$ such that
			\[
			A=-\frac18 I_2+N, \qquad N\ne0,\quad N^2=0.
			\]
		\end{enumerate}
	\end{theorem}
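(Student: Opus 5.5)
The plan is to treat each of the six families separately, using a common reduction. The scalar obstruction is already in hand. If $x=(\cdots(e_1\circ e_2)\cdots)\circ e_m=\lambda 1$, then either $e_m=1$ and we may shorten the word, or $e_m\ne1$. In the latter case, writing $y$ for the prefix, we get $\lambda e_m=\lambda 1\circ e_m=(y\circ e_m)\circ e_m$. Peirce decomposition relative to $e_m$ then shows that $\lambda 1$ lies in the image of $L_{e_m}$, which has no component in the $0$-space of $e_m$ unless $\lambda=0$. So the content is the converse: every nonscalar $x$ is a word value. The key mechanism is to write $x=y\circ e$ with $e$ idempotent and $y$ already known to lie in $\PP(\J)$. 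Inverting $L_e$ on its Peirce $\tfrac12$- and $1$-spaces, the equation $y\circ e=x$ is solvable exactly when the Peirce $0$-component $x_0(e)=0$. In that case the solutions are $y=x_1+2x_{1/2}+z_0$ with $z_0\in\J_0(e)$ arbitrary. The freedom in $z_0$ is what lets us steer $y$ into an already-represented class.

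Concretely, I would first establish that $\PP(\J)$ contains enough elements. This means all idempotents, and then all products $e\circ f$ of two idempotents; for $M_n(\FF)^+$ with $n\ge2$ these include invertible nonscalar elements, by the $E,F$ example. Next, for a given nonscalar $x$, I choose a rank-one (primitive) idempotent $e$ with $x_0(e)=0$. For $M_n$ this amounts to choosing $e=uv^t$, $v^tu=1$, with $(I-e)x(I-e)=0$. More conveniently, I would choose $e$ whose $0$-space meets $x$ trivially, and then adjust. I then use the free $z_0$ to make $y$ nonscalar of a form handled earlier, for instance of strictly smaller "complexity" (rank of $x-\lambda 1$ for a suitable eigenvalue $\lambda$, or a Jordan-normal-form invariant). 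For spin factors and $H_3(\mathbb O_s)$ I would instead exploit the fact that nonscalar elements have a degree-2 or degree-3 minimal polynomial, and then argue directly within the Jordan frame. For $H_n(\FF)$ and $H_{2n}(\FF,\mathrm{sp})$ I would embed $2\times2$ or $4\times4$ blocks and use that a nonscalar symmetric matrix is conjugate (by orthogonal/symplectic automorphisms, which preserve $\PP$) to a block form. These reductions bring everything down to small cases: $H_2$, $H_3$, spin factors of low dimension, and $M_2^+$.

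Part (b) is an explicit computation in $H_2(\FF)\cong$ the spin factor on a $2$-dimensional quadratic space. Its idempotents are $0$, $I_2$, and the elements $\tfrac12(I_2+v)$ with $v$ traceless, $v^2=I_2$. I would parametrize words of length $m$ and compute the trace and determinant of their values. Two claims must be shown. First, a nonzero nilpotent $N$ is never reached: the last step $y\circ e=N$ forces an incompatibility between the traceless part of $y$ and the isotropic direction of $N$. Second, the only elements needing four factors are $-\tfrac18I_2+N$. For the latter, I would compute the sets reachable in $1,2,3$ steps as unions of conditions on $(\tr,\det)$ and on isotropy, showing that length three already covers everything except $\tfrac{-1}{8}I_2+N$, and then exhibit a length-four word for those. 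I expect the main obstacle to be the general inductive step: choosing $e$ and $z_0$ so that $y$ is represented and nonscalar in all cases, especially when $x-\lambda1$ is nilpotent, since this is exactly the case that fails in $H_2$. There I would first try the ansatz $y=e'\circ f$ or $y$ an idempotent plus a shift, and check it separately in $M_2^+$, $H_3$ and the spin factors.
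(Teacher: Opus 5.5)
\textbf{There is a genuine gap.} The converse direction is the entire content of part (a): every nonscalar element must be shown to be a word value. You never carry it out, as you acknowledge yourself, and the one concrete choice you commit to cannot work.

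\emph{The rank-one last factor is impossible.} In $M_n(\FF)^+$ the image of $L_e$ is $\{x:(I_n-e)x(I_n-e)=0\}$. As in the proof of Lemma~\ref{lem:rank}, every $x=y\circ e$ satisfies $\rank(I_n-e)\le\rank(x-\lambda I_n)$ for all $\lambda\ne0$.
\begin{itemize}
\item Take $x=\lambda I_n+E_{12}$ with $\lambda\notin\{0,1\}$ and $n\ge3$. Then the last nontrivial factor must be $I_n-P$ with $P$ of rank one, never a rank-one idempotent.
\item For invertible $x$, the last factor must have rank at least $n/2$.
\end{itemize}
So solvability of $y\circ e=x$ forces $\J_0(e)$ to be small, and this destroys the freedom in $z_0$ you planned to exploit: for $e=I_n-P$ you only have $z_0\in\FF P$. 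Your proposed complexity measure also runs the wrong way. For fixed $\lambda\notin\{0,1\}$, Lemma~\ref{lem:rank} shows that $\rank(y-\lambda1)$ must climb back to $n$ as you move toward the first factor.

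\emph{How the paper resolves this.}
\begin{itemize}
\item For $H_n(\FF)$, Lemma~\ref{lem:anisotropic-zero} (this is where $n\ge3$ enters) gives an anisotropic $m$ with $b(Am,m)=0$, so $E=I-P_m$ is admissible. Proposition~\ref{prop:selfadjoint-reduction} then spends the single free parameter so that the predecessor splits as $B_0\oplus[\rho]$ on $k^\perp\oplus_b\FF k$. The argument is finished by induction on $n$, the padding Lemma~\ref{lem:padding}, and Lemma~\ref{lem:nilpotent-plus-scalar} for the residual case $B_0^2=0$ when $n=3$. Padding is needed because a block $\mu I_k$ with $\mu\notin\{0,1\}$ is not representable on its own.
\item For $M_n(\FF)^+$ the paper does not peel off one idempotent at a time. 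It uses rational canonical form together with explicit words for cyclic blocks of the shape $\left[\begin{smallmatrix}0&B\\C&CT\end{smallmatrix}\right]$, whose last factors have rank about $n/2$ (Lemma~\ref{lem:block-identities}, Proposition~\ref{prop:cyclic}). Padding then handles the remaining diagonal coordinates.
\end{itemize}

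\emph{Your other reductions fail as stated.}
\begin{itemize}
\item A nonscalar symmetric matrix need not be orthogonally similar to a direct sum of small blocks. Over an algebraically closed field every matrix is similar to a symmetric one. A symmetric matrix similar to a single Jordan block of size $n$ admits no nontrivial orthogonal block decomposition.
\item By Lemma~\ref{lem:symplectic-normal-form}, elements of $H_{2n}(\FF,\mathrm{sp})$ correspond to $\diag(A,A^t)$ with $A\in M_n(\FF)$ arbitrary. So there is no reduction to $4\times4$ blocks. The paper instead transfers the $M_n(\FF)^+$ result through the embedding $A\mapsto\diag(A,A^t)$.
\item For the split Albert algebra, a nonzero nilpotent element lies in the span of no Jordan frame, so arguing ``within a frame'' cannot reach it. Your sketch has no substitute for the paper's key input, Jacobson's embedding theorem (Theorem~\ref{thm:albert-embedding}), which places every element in a unital copy of $M_3(\FF)^+$.
\end{itemize}

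\emph{Part (b).} Your plan is sound in outline. For spin factors your Peirce mechanism is exactly the paper's, with the one free parameter $\beta$ in Theorem~\ref{thm:spin}. One correction: the obstruction for $N$ concerns the last idempotent, not $y$. Write $N=v$ isotropic and $e=e_t$. The vanishing Peirce-$0$ component forces $b(v,t)=0$, so $t\in v^\perp=\FF v$, which contradicts $q(t)=1/4$.
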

	
	Although Theorem~\ref{thm:classification} is stated over an algebraically closed field, the individual families can be treated under weaker hypotheses. The scalar case is immediate and has width $1$. For spin factors, quadratic closedness suffices, and Theorem~\ref{thm:spin} gives width $4$ when the algebra has dimension $3$ and width $3$ in all higher dimensions. Over more general fields, the existence of short idempotent words already depends on the square classes of $\FF$, as Lemma~\ref{lem:spin-discriminant} shows. For symmetric matrices of order $n\ge3$, quadratic closedness likewise suffices. Over real closed fields, however, even representability may fail, as noted in Remark~\ref{rem:real-closed}. By contrast, the full matrix, symplectic Hermitian and split Albert cases require only $\operatorname{char}\FF\ne2$. Finally, Remark~\ref{rem:quadratic-scope} shows that quadratic closedness alone does not suffice to extend Theorem~\ref{thm:classification} to all finite-dimensional simple Jordan algebras.
	
	We first record the full matrix result, which gives a complete description of the representable elements together with quantitative bounds for the maximum length. We use $\lceil\cdot\rceil$ for the ceiling function.
	
	\begin{theorem}\label{thm:matrices}
		Let $\FF$ be any field with $\operatorname{char}\FF\ne2$, and let $n\ge2$.
		Then
		\begin{equation}\label{eq:matrix-classification}
			\PP(M_n(\FF)^+)
			=(M_n(\FF)\setminus\FF I_n)\cup\{0,I_n\},
		\end{equation}
		and
		\begin{equation}\label{eq:matrix-width}
			1+\ceil{\log_2 n} \le w(M_n(\FF)^+) \le 2+5\ceil{\log_2 n}.
		\end{equation}
		If $\FF$ is quadratically closed, the upper bound improves to
		\[
		w(M_n(\FF)^+)\le 4+3\ceil{\log_2 n}.
		\]
	\end{theorem}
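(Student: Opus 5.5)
Three things have to be shown: the set equality \eqref{eq:matrix-classification}, the lower bound $1+\ceil{\log_2 n}$, and the upper bounds. The inclusion $\PP\subseteq(M_n(\FF)\setminus\FF I_n)\cup\{0,I_n\}$ is the scalar obstruction from the introduction. Suppose a word ends in an idempotent $e$ and has value $\lambda I_n$, so $x\circ e=\lambda I_n$ with $x$ the value of the shorter prefix. If $e\ne 0,I_n$, then in the Peirce decomposition relative to $e$ the component of $x\circ e$ in the $0$-space is zero. This forces $\lambda=0$. Hence only $0$ and $1$ occur as scalar values.

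For the reverse inclusion and the upper bound I will argue in two stages.

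\emph{Base case $n=2$.} Every nonscalar $A\in M_2(\FF)$ is cyclic, so it is determined up to similarity by its trace $t$ and determinant $d$. Conjugation preserves idempotents and Jordan products, so it suffices to realize each pair $(t,d)$ by a short word. Take $E\circ F$ for rank-one idempotents $E=uv^t$ and $F=u'v'^t$ with $v^tu=v'^tu'=1$. Its trace and determinant are explicit in the cross pairings $\alpha=v^tu'$ and $\beta=v'^tu$; for instance $\tr(E\circ F)=1+\alpha\beta$, and a direct computation gives $\det(E\circ F)=-\tfrac14(\alpha\beta-1)^2$ up to the sign convention I still need to check. So two factors realize only a one-parameter curve of pairs $(t,d)$. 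The next step is to multiply by a third and fourth idempotent, for example $x\mapsto x\circ G$ with $G$ of rank one. This should give enough free parameters, solved by linear or rational equations over any field with $\operatorname{char}\FF\ne2$, to reach every $(t,d)$, with a few cases for nilpotent and non-diagonalizable $A$. I expect a bound of roughly $7$ over a general field and smaller when square roots are available.

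\emph{Induction on $n$.} This produces the factor $\ceil{\log_2 n}$. I would write $n=n_1+n_2$ with $n_i\le\ceil{n/2}$ and use a rational canonical form argument. After conjugation, every nonscalar $A$ of size $n\ge3$ becomes block upper triangular with nonscalar diagonal blocks of sizes $n_1$ and $n_2$. If one block would be scalar, I adjust the split or use a cyclic decomposition. The key device is the block-diagonal idempotent $P=I_{n_1}\oplus 0$. For $X=\begin{bmatrix}X_{11}&X_{12}\\X_{21}&X_{22}\end{bmatrix}$ one has
\[
X\circ P=\begin{bmatrix}X_{11}&\tfrac12X_{12}\\ \tfrac12X_{21}&0\end{bmatrix}.
\]
Block-diagonal idempotents $e_1\oplus e_2$ multiply blockwise, so words in the two blocks can be run in parallel; the shorter word is padded by multiplying with $I$. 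Hence a word for the pair of diagonal blocks costs the maximum of the two block lengths. A bounded number of additional idempotent factors, at most $5$ per level, then installs the off-diagonal part and corrects the scaling. Conjugating by a block-unipotent matrix helps here: a nonscalar block-triangular matrix is similar to one with a chosen off-diagonal block, unless the Sylvester equation obstructs it.

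The recursion $L(n)\le L(\ceil{n/2})+5$ with $L(1)\approx 2$ gives $2+5\ceil{\log_2 n}$. Over quadratically closed fields, the ability to diagonalize $2\times 2$ pieces should save two factors per level, giving $4+3\ceil{\log_2 n}$.

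\emph{Main obstacle.} The hardest step is controlling the off-diagonal blocks and the $\tfrac12$-scalings with a uniformly bounded number of extra factors. This is especially delicate when the diagonal blocks share eigenvalues, since then the Sylvester-equation trick fails. My first attempt would choose the split so that the two diagonal blocks have coprime characteristic polynomials where possible, and handle the remaining cases separately.

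\emph{Lower bound.} I would use a rank or kernel count. If $x$ is a word of length $m$ in which $k$ factors are not $0$ or $I$, I expect $\rank(x-\lambda I)$ to be controlled by roughly $2^{k}$ times the minimum ranks of the factors or of $I$ minus the factors. The reason is that each Jordan multiplication by an idempotent $e$ can at most double the dimension on which $x$ differs from its behaviour on $\ker e$, since $x\circ e$ agrees with a compression of $x$ off $\im e+e^{t}$-related subspaces. Choosing $A$ with $n$ distinct eigenvalues, none equal to $0$ or $1$, should then force $2^{m-1}\ge n$. Making this doubling inequality precise is the second place I expect difficulty.
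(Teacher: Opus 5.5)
Your scalar-obstruction argument is fine; it is the paper's Lemma on scalars. The upper bound (and with it representability) and the lower bound both have genuine gaps.

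\textbf{Upper bound and representability.} Everything rests on the claim that at most $5$ extra idempotent factors per level install the off-diagonal block on top of parallel words for the two diagonal blocks. This is asserted, not proved, and it is the whole difficulty.

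Worse, the halving scheme cannot start in general. Take $A=\diag(\lambda,\mu,\ldots,\mu)$ with $\lambda\ne\mu$ and $n\ge3$. Restrictions and quotients of a diagonalizable operator are diagonalizable, so every block-triangular form of $A$ has one diagonal block equal to $\mu I$. When $\mu\notin\{0,1\}$, that block has no idempotent word at all, so there is nothing to run in parallel. These are exactly the matrices where the logarithm is forced.

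The paper handles them by padding instead of halving:
\begin{enumerate}
\item Pair each extra diagonal coordinate $\delta$ with a coordinate of an identity block.
\item Build $D\oplus I_r$ from parallel words for $\diag(\delta,1)$.
\item Append $I_s\oplus E_j$, where $E_1,\ldots,E_m$ is a word for the non-diagonal core $X$.
\end{enumerate}
Each step doubles the size at cost $c=\max_\delta\ell(\diag(\delta,1))$, with $c=5$ in general and $c=3$ over quadratically closed fields. So the quadratically closed improvement is just a smaller padding cost, not a per-level saving.

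The core needs no recursion. The identities for $\begin{bmatrix}0&B\\C&0\end{bmatrix}$ (four factors) and $\begin{bmatrix}0&B\\C&CT\end{bmatrix}$ (six factors) are applied to a cyclic basis split into odd- and even-indexed vectors. This gives length at most $7$ for every cyclic matrix of order $\ge3$; an invertible odd-order one is first written as $Y\circ E$ with $Y$ singular and cyclic. Cyclic $2\times2$ blocks have length at most $5$. Hence $\ell(A)\le 7+c(\ceil{\log_2 n}-1)$.

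Your $n=2$ base case is also not established. In fact $\tr(E\circ F)=\alpha\beta$ and $\det(E\circ F)=\tfrac14\alpha\beta(\alpha\beta-1)$, so the two-factor locus is $4\delta=\tau(\tau-1)$. The third factor then imposes a genuinely quadratic condition, not linear ones: a three-factor word exists only if $\eta=1+8\tau+64\delta$ is a square. Over fields that are not quadratically closed, some matrices need five factors.

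\textbf{Lower bound.} Your witness cannot work: a matrix with $n$ distinct eigenvalues is cyclic, so it has length at most $7$. The needed inequality points the other way from your sketch. For $\lambda\ne0$ and an idempotent $E$,
\[
\rank(X-\lambda I_n)\le2\rank(X\circ E-\lambda I_n).
\]
To prove it, set $Q=I_n-E$ and $Y=X\circ E-\lambda I_n$. Then $QYQ=-\lambda Q$, which gives $\rank Q\le\rank Y$, and $X-\lambda I_n=(I_n+Q)Y(I_n+Q)+Q(X+3\lambda I_n)Q$. Starting from $\rank(E_1-\lambda I_n)=n$ with $\lambda\notin\{0,1\}$, you get $n\le2^{m-1}\rank(A-\lambda I_n)$. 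So the hard elements are those close to a nontrivial scalar, e.g.\ $\lambda I_n+E_{12}$, for which $\rank(A-\lambda I_n)=1$.
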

	
	It follows from Theorem~\ref{thm:matrices} that $w(M_n(\FF)^+)$ grows on the order of $\log n$ over arbitrary fields of characteristic different from $2$. For $2\times2$ matrices, Proposition~\ref{prop:exact-two} determines the exact length of every nonscalar matrix. For symmetric matrices over quadratically closed fields and $n\ge3$, Theorem~\ref{thm:symmetric} gives
	\[
	1+\ceil{\log_2 n} \le w(H_n(\FF)) \le 4n-4.
	\]
	These bounds leave a gap between logarithmic and linear growth. For symplectic Hermitian algebras (see \eqref{eq:symplectic-model}) over arbitrary fields of characteristic different from $2$ and $n\ge2$, Theorem~\ref{thm:symplectic} gives
	\[
	1+\ceil{\log_2 n} \le w(H_{2n}(\FF,\mathrm{sp})) \le w(M_n(\FF)^+).
	\]
	Combined with Theorem~\ref{thm:matrices}, this shows that $w(H_{2n}(\FF,\mathrm{sp}))$ also grows on the order of $\log n$. Finally, Proposition~\ref{prop:albert} gives idempotent-word width at most $10$ for the split Albert algebra, improving to at most $7$ over quadratically closed fields.
	
	The paper is organized as follows. Section~\ref{sec:preliminaries} fixes the notation and terminology used throughout, describes the simple models needed below, states the corresponding structure theorem, and establishes elementary properties of idempotent words. Section~\ref{sec:spin} treats spin factors, while Section~\ref{sec:matrices} is devoted to full matrix algebras, including the exact result for $2\times2$ matrices. Section~\ref{sec:symmetric} treats symmetric matrices. Finally, Section~\ref{sec:remaining} considers symplectic Hermitian algebras and the split Albert algebra, completes the proof of Theorem~\ref{thm:classification}, and derives consequences for arbitrary bracketings and semisimple algebras in Corollaries~\ref{cor:arbitrary-bracketing} and~\ref{cor:semisimple}.
	
	\section{Preliminaries}\label{sec:preliminaries}
	
	Throughout, $\FF$ denotes a field of characteristic different from $2$; any stronger assumptions on $\FF$ will be stated explicitly when needed. We call $\FF$ \emph{quadratically closed} if every polynomial of degree two over $\FF$ has a root in $\FF$. Since $\operatorname{char}\FF\ne2$, this is equivalent to requiring that every element of $\FF$ be a square. Such a field is necessarily infinite.
	
	For $p,q\ge1$, let $M_{p,q}(\FF)$ denote the space of $p\times q$ matrices over $\FF$; when $p=q=n$, we simply write $M_n(\FF)$. For $A\in M_{p,q}(\FF)$, we write $A^t$ for its transpose. For $A\in M_n(\FF)$, $\tr A$ denotes its trace. As usual, $I_n$ denotes the identity matrix in $M_n(\FF)$ and $E_{ij}$ the standard matrix unit. If $A\in M_p(\FF)$ and $B\in M_q(\FF)$, we write $A\oplus B:=\diag(A,B) \in M_{p+q}(\FF)$, and for $\lambda\in\FF$ we denote by $[\lambda]$
	the $1\times1$ matrix with entry $\lambda$. A matrix $S\in M_n(\FF)$ is called \emph{orthogonal} if $S^tS=I_n$, and similarity by an orthogonal matrix is called \emph{orthogonal similarity}. Diagonalizability is always understood over $\FF$.
	
	For an $\FF$-vector space $V$, we write $\End_\FF(V)$ for the algebra of $\FF$-linear operators on $V$ and $\Span S$ for the linear span of a subset $S\subseteq V$. For subspaces $U,W\subseteq V$, the notation $V=U\oplus W$ means that $V$ is their direct sum. If $V=U\oplus W$, $A\in\End_\FF(U)$ and $B\in\End_\FF(W)$, then $A\oplus B\in\End_\FF(V)$ denotes the operator given by
	\[
	(A\oplus B)(u+w):=Au+Bw, \qquad u \in U, \, w \in W.
	\]
	On a one-dimensional summand, $[\lambda]$ also denotes multiplication by $\lambda\in\FF$. 
	
	A matrix $A\in M_n(\FF)$ is called \emph{cyclic} if there exists $u\in\FF^n$ such that the vectors $u,Au,\ldots,A^{n-1}u$ form a basis of $\FF^n$. We shall use rational canonical form in the following standard form (see, e.g., \cite[Chapter~7, section ``The Rational Canonical Form'']{Roman2008}): every $A\in M_n(\FF)$ is similar to a direct sum of companion matrices corresponding to monic nonconstant polynomials $f_1,\ldots,f_s\in\FF[t]$, where $\FF[t]$ denotes the polynomial ring in the indeterminate $t$ over $\FF$, satisfying $f_1\mid\cdots\mid f_s$. These are the \emph{invariant factors} of $A$. Each companion block is cyclic, and $f_s$ is the minimal polynomial of $A$.

	A \emph{Jordan algebra} over $\FF$ is an $\FF$-algebra $\J$ whose multiplication $\circ$ is commutative and satisfies the \emph{Jordan identity}
	\[
	x^2\circ(x\circ y)=x\circ(x^2\circ y), \qquad x,y\in\J.
	\]
	An $\FF$-linear subspace $\mathcal I\subseteq\J$ is an \emph{ideal} of $\J$ if
	$x\circ y\in\mathcal I$ for all $x\in\mathcal I$ and $y\in\J$. A Jordan algebra $\J$ is \emph{simple} if its product is not identically zero and its only ideals are $\{0\}$ and $\J$. Every finite-dimensional simple Jordan algebra over $\FF$ is automatically unital; see \cite[Part~I, \S\,3.10]{McCrimmon2004}. Throughout, all Jordan algebras considered below are assumed to be nonzero and unital. A Jordan homomorphism is an $\FF$-linear map preserving the Jordan product.
	
	Every Jordan algebra over $\FF$ is \emph{power-associative}, meaning that the subalgebra generated by any single element is associative; see \cite[Part~II, Theorem~5.2.2(1)]{McCrimmon2004}. Consequently, the powers of any element $x\in\J$ are independent of bracketing and are given recursively by
	\[
	x^1:=x,\qquad x^{k+1}:=x^k\circ x,\qquad k\ge1.
	\]
	An element $e\in\J$ is called an \emph{idempotent} if $e^2=e$, and it is called \emph{nontrivial} if $e\notin\{0,1\}$. An element $x\in\J$ is called \emph{nilpotent} if $x^k=0$ for some $k\ge1$, and \emph{scalar} if $x\in\FF1$. Whenever $M_n(\FF)$ is regarded as a Jordan algebra, it is equipped with the product \eqref{eq:symmetrized-product}. We refer to \cite{Jacobson1968,McCrimmon2004,Schafer1966} for standard background on Jordan algebras.
	
	\subsection{The simple Jordan algebras}\label{subsec:models}
	
	We now describe the simple Jordan algebras that will occur below. The models themselves are defined over the fields indicated; the assumption that $\FF$ is algebraically closed will be used only when we invoke the classification theorem.
	
	Let $V$ be a finite-dimensional $\FF$-vector space, and let $b:V\times V\to\FF$ be a nondegenerate symmetric bilinear form. We write
	\[
	q(v):=b(v,v), \qquad v \in V,
	\]
	for the associated quadratic form. For a subspace $W\subseteq V$, we write
	\[
	W^\perp:=\{v\in V:b(v,w)=0\text{ for all }w\in W\}
	\]
	for its orthogonal complement. For $v\in V$, we also write $v^\perp:=(\FF v)^\perp$.
	
	For subspaces $U,W\subseteq V$, we write $V=U\oplus_b W$ if $V=U\oplus W$ and $b(u,w)=0$ for all $u\in U$ and $w\in W$; this is called an \emph{orthogonal direct sum}. A basis $\{e_1,\ldots,e_n\}$ of $V$ is called \emph{orthonormal} if $b(e_i,e_j)=\delta_{ij}$ for all $1 \le i,j \le n$, where $\delta_{ij}$ denotes the Kronecker delta. An \emph{isometry} between symmetric bilinear spaces is a linear isomorphism preserving the bilinear forms.
	
	A nonzero vector $v\in V$ is called \emph{isotropic} if $q(v)=0$ and \emph{anisotropic} otherwise. A subspace $W\subseteq V$ is \emph{totally isotropic} if $q$ vanishes identically on $W$; equivalently, $b$ vanishes on $W\times W$. For an alternating bilinear form $a$ on $V$, a subspace $W\subseteq V$ is likewise called \emph{totally isotropic} if $a$ vanishes on $W\times W$.
	
	A basis $\{u,v\}$ of a two-dimensional subspace $U\subseteq V$ is called a \emph{hyperbolic basis} if
	\[
	q(u)=q(v)=0,\qquad b(u,v)=1.
	\]
	If $U\subseteq V$ is a nondegenerate two-dimensional subspace, then any nonzero isotropic vector $u\in U$ can be completed to a hyperbolic basis: choose $v_0\in U$ with $b(u,v_0)=1$ and set $v:=v_0-\frac12q(v_0)u$.
	
	If $Q$ is a quadratic form on $V$, its \emph{polar form} is denoted by $b_Q$ and is given by
	\[
	b_Q(u,v):=\frac{1}{2}\left(Q(u+v)-Q(u)-Q(v)\right), \qquad u,v\in V.
	\]
	The \emph{radical} and \emph{rank} of $Q$ are defined by
	\begin{align*}
		\rad Q&:=\{u\in V:b_Q(u,v)=0\text{ for all }v\in V\},\\
		\rank Q&:=\dim V-\dim\rad Q,
	\end{align*}
	respectively.
	
	The \emph{spin factor} associated with $(V,b)$ is
	\[
	\mathcal S(V,b):=\FF1\oplus V,
	\]
	with unit $1$ and product
	\begin{equation}\label{eq:spin-product}
		(\alpha1+u)\circ(\beta1+v):=(\alpha\beta+b(u,v))1+\alpha v+\beta u, \qquad \alpha, \beta \in \FF, \, u,v \in V.
	\end{equation}
	In what follows, we consider spin factors with $\dim V\ge2$.
	
	For $n\ge1$, we write $M_n(\FF)^+$ for $M_n(\FF)$ regarded as a Jordan algebra with the product \eqref{eq:symmetrized-product}. Its unit is $I_n$ and its dimension is $n^2$. The symmetric matrices form the Jordan subalgebra
	\[
	H_n(\FF):=\{A\in M_n(\FF):A^t=A\},
	\]
	which has unit $I_n$ and dimension $n(n+1)/2$.
	
	We shall also use the coordinate-free version of $H_n(\FF)$. If $V$ is equipped with a nondegenerate symmetric bilinear form $b$, set
	\begin{equation}\label{eq:selfadjoint-model}
		H(V):=\{A\in\End_{\FF}(V):b(Au,v)=b(u,Av)\text{ for all } u,v\in V\}.
	\end{equation}
	Thus $H(V)$ is the Jordan algebra of $b$-self-adjoint endomorphisms of $V$, with the symmetrized product \eqref{eq:symmetrized-product} and unit the identity operator $I$. If $\FF$ is quadratically closed and $n=\dim V$, then $V$ admits an orthonormal basis, and with respect to such a basis $H(V)$ identifies with $H_n(\FF)$. Isometries of symmetric bilinear spaces induce Jordan isomorphisms between the corresponding algebras $H(V)$.
	
	For $n\ge1$, let
	\[
	R_n:=\begin{bmatrix}
		0&I_n\\-I_n&0
	\end{bmatrix}\in M_{2n}(\FF).
	\]
	The \emph{symplectic Hermitian algebra} is
	\begin{equation}\label{eq:symplectic-model}
		H_{2n}(\FF,\mathrm{sp})
		:=\{X\in M_{2n}(\FF):X^tR_n=R_nX\}.
	\end{equation}
	Equivalently, it consists of the endomorphisms self-adjoint with respect to the nondegenerate alternating bilinear form represented by $R_n$. It is a Jordan subalgebra of $M_{2n}(\FF)^+$, with unit $I_{2n}$ and dimension $n(2n-1)$. A matrix $S\in M_{2n}(\FF)$ is called \emph{symplectic} if $S^tR_nS=R_n$; conjugation by such an $S$ preserves $H_{2n}(\FF,\mathrm{sp})$.
	
	Finally, let $\mathbb O_s$ denote the eight-dimensional split octonion algebra over $\FF$, with standard conjugation $a\mapsto\bar a$. The \emph{split Albert algebra} is
	\begin{equation}\label{eq:albert-model}
		\mathcal A:=H_3(\mathbb O_s)=\left\{
		\begin{bmatrix}
			\alpha&a&b\\
			\bar a&\beta&c\\
			\bar b&\bar c&\gamma
		\end{bmatrix}
		: \alpha,\beta,\gamma\in\FF,\ a,b,c\in\mathbb O_s \right\}.
	\end{equation}
	Its product is $X\circ Y=(XY+YX)/2$, where $XY$ is the usual matrix product with octonion entries. The algebra $\mathcal A$ has dimension $27$ and unit $I_3$; see \cite[Part~I, \S\,2.12]{McCrimmon2004}.
	
	We use the following consequence of the Renaissance Structure Theorem for finite-dimensional simple Jordan algebras over algebraically closed fields; see again \cite[Part~I, \S\,2.13]{McCrimmon2004}.
	
	\begin{theorem}[Classification over algebraically closed fields]\label{thm:structure}
		Let $\FF$ be an algebraically closed field of characteristic different from $2$. Up to isomorphism, the finite-dimensional simple Jordan algebras over $\FF$ are precisely the following:
		\begin{enumerate}[label=\textup{(\alph*)}]
			\item the scalar algebra $\FF$;
			\item a spin factor $\mathcal S(V,b)$, where $b$ is nondegenerate and symmetric and $\dim V\ge2$;
			\item $M_n(\FF)^+$, where $n\ge3$;
			\item $H_n(\FF)$, where $n\ge3$;
			\item $H_{2n}(\FF,\mathrm{sp})$, where $n\ge3$;
			\item the split Albert algebra $\mathcal A=H_3(\mathbb O_s)$.
		\end{enumerate}
	\end{theorem}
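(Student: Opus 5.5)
The plan is to obtain Theorem~\ref{thm:structure} as a specialization of the general structure theory rather than to prove it from scratch. We start from the Renaissance Structure Theorem in the form of \cite[Part~I, \S\,2.13]{McCrimmon2004}. It says that a finite-dimensional simple Jordan algebra $\J$ over a field of characteristic $\ne2$ has finite capacity. Such a $\J$ is either a Jordan division algebra, or a reduced spin factor (capacity $2$), or isomorphic to $H_n(\mathbb D,\Gamma)$ for some $n\ge3$. In the last case $\mathbb D$ is an associative composition algebra with involution (or an octonion algebra, only when $n=3$), and $\Gamma$ is a diagonal matrix of scalars. The argument then specializes each branch to an algebraically closed $\FF$.

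\emph{Capacity one.} Let $\J$ be a finite-dimensional Jordan division algebra and $x\in\J$. By power-associativity, $\FF[x]$ is a finite-dimensional associative commutative algebra in which every nonzero element is invertible, so it is a field. It is a finite extension of $\FF$, hence equals $\FF$ because $\FF$ is algebraically closed. Therefore $\J=\FF1$, which is case~(a).

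\emph{Capacity two.} A reduced spin factor comes from a nondegenerate quadratic form. Over an algebraically closed field every such form is diagonalizable to the sum of squares, so up to isometry it depends only on $\dim V$. The requirement $\dim V\ge2$ is exactly what excludes the nonsimple algebra $\FF\oplus\FF$ (the case $\dim V=1$). This gives case~(b).

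\emph{Capacity $n\ge3$.} Over an algebraically closed field, every composition algebra is split. Hence $\mathbb D$ is one of $\FF$, $\FF\oplus\FF$ with the exchange involution, $M_2(\FF)$ with the symplectic involution, or $\mathbb O_s$. After rescaling by square roots of the entries of $\Gamma$, we may take $\Gamma=I_n$. The four possibilities then give the following algebras.
\begin{itemize}[label=--]
\item $\mathbb D=\FF$ gives $H_n(\FF)$.
\item $\mathbb D=\FF\oplus\FF$ gives $H_n(\FF\oplus\FF)$. The map $(A,B)\mapsto A$ identifies it with $M_n(\FF)^+$.
\item $\mathbb D=M_2(\FF)$ gives $H_n(M_2(\FF),\mathrm{sp})$. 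Regrouping coordinates identifies it with $H_{2n}(\FF,\mathrm{sp})$ as defined in \eqref{eq:symplectic-model}; this requires an explicit symplectic change of basis relating the block-symplectic form to $R_n$.
\item $\mathbb D=\mathbb O_s$ (only $n=3$) gives $\mathcal A$.
\end{itemize}
These are cases (c)--(f). The restriction $n\ge3$ is consistent with the capacity-$2$ branch: $H_2(\FF)$, $M_2(\FF)^+$ and $H_4(\FF,\mathrm{sp})$ are spin factors of dimensions $3$, $4$ and $6$. We also check that each listed algebra is indeed simple; this is standard and also part of the cited theorem.

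Finally, the ``precisely'' and ``up to isomorphism'' parts require the listed algebras to be pairwise nonisomorphic. The invariant pair (capacity, dimension) does this:
\begin{itemize}[label=--]
\item $\FF$: capacity $1$, dimension $1$;
\item $\mathcal S(V,b)$: capacity $2$, dimension $1+\dim V$;
\item $M_n(\FF)^+$: capacity $n$, dimension $n^2$;
\item $H_n(\FF)$: capacity $n$, dimension $n(n+1)/2$;
\item $H_{2n}(\FF,\mathrm{sp})$: capacity $n$, dimension $n(2n-1)$;
\item $\mathcal A$: capacity $3$, dimension $27$.
\end{itemize}
For a fixed capacity $n\ge3$, the three dimensions $n^2$, $n(n+1)/2$ and $n(2n-1)$ are distinct, and none equals $27$ when $n=3$.

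The main obstacle I expect is the third branch. It requires invoking the Coordinatization Theorem correctly, including Jacobson's theorem that octonion coordinates force $n=3$, and making the identification of $H_n(M_2(\FF),\mathrm{sp})$ with the concrete model \eqref{eq:symplectic-model} fully explicit. For the rest I would rely on the cited reference rather than reprove them.
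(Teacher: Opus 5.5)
Your proposal follows the same route as the paper. The paper gives no independent proof: it quotes the result as a consequence of McCrimmon's Renaissance Structure Theorem (Part~I, \S\,2.13) over algebraically closed fields. Its only additions are the identification of the quaternionic family with $H_{2n}(\FF,\mathrm{sp})$ and the remark that the $n=2$ members of (c)--(e) are spin factors, both of which you also supply, together with further checks such as reducing to $\Gamma=I_n$ and pairwise non-isomorphism.

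One minor imprecision does not affect the conclusion. Over a general field, the capacity-$n\ge3$ coordinate algebras are not only composition algebras: they also include associative division algebras with involution and $\Delta\oplus\Delta^{\mathrm{op}}$ with the exchange involution. Over an algebraically closed $\FF$, however, all of these reduce to your list $\FF$, $\FF\oplus\FF$, $M_2(\FF)$, $\mathbb O_s$.
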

	
	The family in \textup{(e)} is the usual quaternionic Hermitian family written in matrix form: over $\FF$, the split quaternion algebra is $M_2(\FF)$ with its standard symplectic involution, and the corresponding Hermitian Jordan algebra is $H_{2n}(\FF,\mathrm{sp})$.
	
	The restrictions $n\ge3$ in \textup{(c)}--\textup{(e)} merely avoid repetitions among the low-dimensional cases. For $n=1$, the three matrix families reduce to the scalar algebra $\FF$. For $n=2$, the algebras $H_2(\FF)$, $M_2(\FF)^+$ and $H_4(\FF,\mathrm{sp})$ are spin factors with vector parts of dimensions $2$, $3$ and $5$, respectively, and are therefore already covered by \textup{(b)}. The first two identifications are made explicit in the proof of Corollary~\ref{cor:two-by-two}, while the third follows from the split-quaternion description above together with \cite[Part~II, Example~11.1.4]{McCrimmon2004}.
	
	\subsection{Elementary properties of idempotent words}\label{subsec:word-properties}
	
	Let $\J$ be a Jordan algebra and let $e\in\J$ be an idempotent. The \emph{Peirce decomposition} of $\J$ with respect to $e$ is
	\[
	\J=\J_1(e)\oplus\J_{1/2}(e)\oplus\J_0(e),   \qquad
	\J_i(e):=\{x\in\J:e\circ x=ix\},
	\quad i\in\left\{0,\frac12,1\right\};
	\]
	see \cite[Part~II, 8.1.2 and 8.1.4]{McCrimmon2004}. Here $1=e+(1-e)$, with $e\in\J_1(e)$ and $1-e\in\J_0(e)$.
	
	\begin{lemma}\label{lem:scalars}
		For every unital Jordan algebra $\J$, we have $\PP(\J)\cap\FF1=\{0,1\}$.
	\end{lemma}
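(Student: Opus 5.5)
Two inclusions are needed. For $\{0,1\}\subseteq\PP(\J)$, note that $0$ and $1$ are idempotents, so each is the value of the word of length one consisting of itself. The substance lies in the reverse inclusion: if an idempotent word $x=(\cdots(e_1\circ e_2)\circ\cdots)\circ e_m$ equals $\lambda1$ with $\lambda\in\FF$, then $\lambda\in\{0,1\}$.

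We argue by induction on $m$, using the Peirce decomposition relative to the last factor $e:=e_m$. Write $y$ for the preceding word (for $m=1$ we have $x=e_1$, and an idempotent scalar $\lambda 1$ satisfies $\lambda^2=\lambda$, so $\lambda\in\{0,1\}$). Then $x=y\circ e$. Decompose $y=y_1+y_{1/2}+y_0$ with $y_i\in\J_i(e)$, so that
\[
y\circ e=y_1+\tfrac12 y_{1/2}.
\]
Suppose $y\circ e=\lambda1=\lambda e+\lambda(1-e)$. Here $\lambda e\in\J_1(e)$ and $\lambda(1-e)\in\J_0(e)$. Comparing Peirce components gives three equations: $y_1=\lambda e$, $y_{1/2}=0$, and $\lambda(1-e)=0$. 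If $\lambda\ne0$, the last equation forces $e=1$, and then $x=y\circ1=y$. In that case $y$ is a shorter word with scalar value $\lambda$, so by induction $\lambda\in\{0,1\}$. If $\lambda=0$ there is nothing to prove. This completes the induction.

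The only step that needs care is matching the components. It relies on the directness of the sum $\J_1(e)\oplus\J_{1/2}(e)\oplus\J_0(e)$ and on $1-e\in\J_0(e)$, both of which were recorded above. Note also that when $\lambda\ne 0$ the unit lies in the image, so the $\J_0(e)$-component of $x$ is $\lambda(1-e)$, while the Peirce formula above shows that $x$ has no $\J_0(e)$-component. This is exactly the contradiction that forces $e=1$. No other obstacle is expected.
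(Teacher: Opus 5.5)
Your proof is correct and follows the paper's argument. When $\lambda\ne0$, comparing the $\J_0(e)$-components of $y\circ e$ and $\lambda1$ for the last factor $e$ forces $e=1$; the paper then reduces via a shortest word rather than by your induction on $m$, which is equivalent.
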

	
	\begin{proof}
		Suppose $\lambda1\in\PP(\J)$ with $\lambda\ne0$, and choose a shortest idempotent word for it. If its length is greater than one, write $\lambda1=y\circ e$, where $e$ is the last idempotent. The $\J_0(e)$-component of $y\circ e$ is zero, whereas that of $\lambda1$ is $\lambda(1-e)$. Hence $e=1$, contradicting the minimality of the word. Thus $\lambda1$ is itself an idempotent, so $\lambda=1$. Both $0$ and $1$ are idempotents.
	\end{proof}
	
	Jordan homomorphisms preserve idempotent words. In particular, if $\J$ is a Jordan subalgebra of a Jordan algebra $\mathcal K$, then $\ell_{\mathcal K}(x)\le\ell_{\J}(x)$ for every $x\in\PP(\J)$, while Jordan isomorphisms preserve length. Thus $\ell_{M_n(\FF)^+}$ is invariant under similarity and $\ell_{H_n(\FF)}$ under orthogonal similarity.
	
	Appending the unit does not change the value of an idempotent word. We shall use the following consequence.
	
	\begin{lemma}\label{lem:products}
		Let $\J_1,\ldots,\J_s$ be unital Jordan algebras. Then
		\[
		\PP\left(\prod_{i=1}^s\J_i\right)=\prod_{i=1}^s\PP(\J_i),
		\]
		and, for $x_i\in\PP(\J_i)$,
		\[
		\ell_{\prod_i\J_i}((x_i)_{i=1}^s)=\max_{1\le i\le s}\ell_{\J_i}(x_i).
		\]
	\end{lemma}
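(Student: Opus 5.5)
The plan is to prove both inclusions and the length formula simultaneously, using that the product $\prod_i\J_i$ has componentwise Jordan product and unit $(1_{\J_1},\ldots,1_{\J_s})$. The basic observation is that an element $e=(e_i)_i$ of the product is idempotent if and only if each component $e_i$ is idempotent in $\J_i$, since $e^2=(e_i^2)_i$.

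First I show that projecting a word gives words in each factor. Let $x=(x_i)_i\in\PP(\prod_i\J_i)$ be the value of a word $(\cdots(e^{(1)}\circ e^{(2)})\circ\cdots)\circ e^{(m)}$. Taking $i$th components, the coordinate projection is a Jordan homomorphism, so $x_i$ is the value of the word $(\cdots(e^{(1)}_i\circ e^{(2)}_i)\circ\cdots)\circ e^{(m)}_i$ of the same length $m$ in $\J_i$. Hence $x_i\in\PP(\J_i)$ and $\ell_{\J_i}(x_i)\le \ell(x)$ for every $i$. This gives the inclusion $\subseteq$ and the inequality $\max_i\ell_{\J_i}(x_i)\le\ell(x)$.

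Conversely, let $x_i\in\PP(\J_i)$ and set $m:=\max_i\ell_{\J_i}(x_i)$. Each $x_i$ has a word of length $m_i:=\ell_{\J_i}(x_i)\le m$. I pad it to length exactly $m$ by appending $m-m_i$ copies of $1_{\J_i}$ at the end. Since $y\circ1=y$, this does not change the value. This step is the remark preceding the lemma, and it is where left-nesting matters: appending on the right is harmless. If $m_i=1$ and $m>1$, the padded word is $(\cdots(e\circ1)\circ\cdots)\circ1=e$, which is still fine.

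Next, with all words of common length $m$, I define $e^{(k)}:=(e^{(k)}_i)_i$. Each of these is idempotent by the basic observation. By componentwise multiplication, the word in the $e^{(k)}$ evaluates to $(x_i)_i$. This yields $\supseteq$ and $\ell((x_i)_i)\le m$, and so the two inequalities combine to give equality.

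I do not expect a genuine obstacle. The only point needing care is that the padding works uniformly even for length-one words, and that the lemma concerns finitely many factors, so the maximum is attained and finite.
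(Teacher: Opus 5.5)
Your proposal is correct and follows essentially the same route as the paper: projecting to each factor gives $\subseteq$ and $\max_i\ell_{\J_i}(x_i)\le\ell(x)$. For the converse, both you and the paper pad shortest words with trailing units to a common length and multiply coordinatewise.
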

	
	\begin{proof}
		Projection to each factor gives one inequality. For the converse, choose shortest words for the $x_i$, append units until all have the same length, and multiply coordinatewise.
	\end{proof}
	
	In particular, a block diagonal matrix whose blocks have idempotent words has length at most the maximum of their lengths. The next lemma allows us to add arbitrary diagonal coordinates.
	
	\begin{lemma}\label{lem:padding}
		For $n\ge1$, let $\mathcal K_n$ denote either $M_n(\FF)^+$ or $H_n(\FF)$, with the same choice made for every $n$. Suppose that there is an integer $c\ge0$ such that, for every $\delta\in\FF$,
		\[
		\diag(\delta,1)\in\PP(\mathcal K_2), \qquad \ell_{\mathcal K_2}(\diag(\delta,1))\le c.
		\]
		Let $r\ge1$ and $s\ge0$, and let $X\in\PP(\mathcal K_r)$ have length at most $L$. If $s\ge1$ and $D$ is a diagonal matrix of order $s$, then
		\begin{equation}\label{eq:padding}
			\ell_{\mathcal K_{r+s}}(D\oplus X) \le L+c\ceil{\log_2\frac{r+s}{r}}.
		\end{equation}
		For $s=0$, the assertion reduces to $\ell_{\mathcal K_r}(X)\le L$.
	\end{lemma}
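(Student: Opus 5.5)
We argue by induction on the number of doubling steps $k:=\ceil{\log_2\frac{r+s}{r}}$. The basic mechanism is this: if $Y\in\PP(\mathcal K_m)$ has length at most $L'$, then we can attach up to $m$ new diagonal coordinates at the cost of at most $c$ additional letters.

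To set this up, write $Y=y_1\circ\cdots$ as a word of length $L'$ (left-nested). Let $D'=\diag(\delta_1,\ldots,\delta_t)$ with $t\le m$. Reorder coordinates so that the $2\times 2$ pairs $(i,\,m+i)$ for $i\le t$ sit together; for the symmetric case this reordering is a permutation, hence an orthogonal similarity, so lengths are preserved in both settings. The goal is to realize $Y\oplus D'$ in $\mathcal K_{m+t}$ with length at most $L'+c$.

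The construction has three steps.

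First, pad the word for $Y$ to length exactly $L'$ and embed it as $Y\oplus I_t$. Each idempotent $e_j$ becomes $e_j\oplus I_t$, which is again an idempotent, and the word value is $Y\oplus I_t$.

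Second, for each $i\le t$, take a word of length at most $c$ for $\diag(\delta_i,1)$ in $\mathcal K_2$. Place it on the coordinates $\{m+i,\,i\}$ so that $\delta_i$ lands on coordinate $m+i$ and the $1$ on coordinate $i$. Pad all these words with units to a common length $c$. Taking the block direct sum with the identity on the untouched coordinates gives idempotents $f_1,\ldots,f_c$ in $\mathcal K_{m+t}$. Their word value is $\Delta:=\diag$ with $\delta_i$ at $m+i$ and $1$ elsewhere.

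Third, the key point is that continuing the word $Y\oplus I_t$ by $f_1,\ldots,f_c$ does not compute $(Y\oplus I_t)\circ\Delta$. Left-nested multiplication by $f_1$ acts on the current value, and this value is not block diagonal with respect to the $2\times2$ pairs, because $Y$ couples coordinate $i$ with other old coordinates.

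So we reverse the order and start with the $f$'s. The word $f_1,\ldots,f_c$ produces $\Delta$. We then want to multiply by idempotents that produce $Y$ on the old coordinates. But Jordan multiplication by $e\oplus I_t$ sends $\Delta$ to $\Delta\circ(e\oplus I_t)$, and in general this is not $Y\oplus D'$.

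The fix uses the specific shape of $\Delta$: on the old coordinates it is the identity. The old-coordinate block and the new-coordinate block are each preserved by block-diagonal idempotents $e\oplus I_t$. Hence the result of the whole word is block diagonal, with old block equal to the word value from $I_m$ times $e_1,\ldots,e_{L'}$ and new block $D'\cdot I=D'$.

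Now, $I_m\circ e_1=e_1$, so starting from $I_m$ the old block reproduces exactly the value of the word $e_1,\ldots,e_{L'}$, namely $Y$. The total word therefore has length $c+L'$ and value $Y\oplus D'$. The only remaining check is that the $f$-word restricted to each pair genuinely yields $\diag(\delta_i,1)$ with no off-diagonal leakage at the end. This holds because the final value is $\Delta$ by Lemma~\ref{lem:products} applied to the block decomposition into pairs.

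The induction then runs as follows. Starting from $X$ of order $r$, the $j$-th step enlarges the order from $2^{j-1}r$ to at most $2^jr$ and places further entries of $D$ in the new slots. After $k$ steps all $s$ coordinates are placed, and the length is at most $L+ck$. A final permutation puts $D$ first, giving $D\oplus X$ and hence \eqref{eq:padding}.

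The main obstacle is the ordering issue in the third step. Multiplying in the order "$X$ first, then the pair words" fails, because the intermediate value couples coordinates across pairs. One has to notice that placing the pair words first leaves the identity on the old coordinates, after which the old word acts cleanly.
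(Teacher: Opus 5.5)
Your proposal is correct and follows the paper's proof. The paper likewise handles $s\le r$ first: it writes $D\oplus I_r$ as a word of length at most $c$ by pairing each entry of $D$ with a coordinate of $I_r$ and invoking Lemma~\ref{lem:products}, then appends $I_s\oplus E_1,\ldots,I_s\oplus E_m$ so that the identity block becomes $X$ while $D$ is untouched, and finally iterates with the doubling sizes $r_{j+1}=\min\{2r_j,r+s\}$; the ordering you settle on after discarding your first attempt is exactly this one.
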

	
	\begin{proof}
		The case $s=0$ is immediate. First assume $1\le s\le r$. Pair each of the $s$ coordinates of $D$ with a different coordinate of $I_r$. Up to permutation, $D\oplus I_r$ is a direct sum of matrices $\diag(\delta_i,1)$ and copies of $[1]$. By Lemma~\ref{lem:products}, $\ell(D\oplus I_r)\le c$.
		
		Choose an idempotent word for $X$ with factors $E_1,\ldots,E_m$, where $m \le L$. Append $I_s\oplus E_1,\ldots,I_s\oplus E_m$ to a word of length at most $c$ for $D\oplus I_r$. The first block remains $D$, and the second block becomes $X$. Hence $\ell(D\oplus X)\le L+c$.
		
		For general $s$, add the coordinates of $D$ from right to left in successive blocks. Start with size $r_0:=r$ and set $r_{j+1}:=\min\{2r_j,r+s\}$. At step $j$, the number $r_{j+1}-r_j$ of new coordinates is at most the current size $r_j$, so the preceding argument applies. Induction yields $r_j = \min\{2^jr, r+s\}$, with the process terminating as soon as $2^j r \ge r+s$. Thus $\ceil{\log_2((r+s)/r)}$ steps suffice. Permutation matrices are orthogonal, so the same argument applies to symmetric idempotents.
	\end{proof}
	
	\section{Spin factors}\label{sec:spin}
	
	We first assume that $\FF$ is quadratically closed, and let $\mathcal S(V,b)$ be the spin factor defined in Section~\ref{subsec:models}, with $\dim V\ge2$. Formula~\eqref{eq:spin-product} shows that, besides $0$ and $1$, its idempotents are precisely
	\[
	e_t:=\frac12 1+t,
	\qquad \text{where } \, t\in V \text{ satisfies } \, q(t)=\frac14.
	\]
	If $W\subseteq V$ is a totally isotropic subspace, then $W\subseteq W^\perp$, and hence $2\dim W\le\dim V$. Consequently, for every $0\ne v\in V$, the hyperplane $v^\perp$ contains an anisotropic vector if $\dim V\ge3$. If $\dim V=2$ and $q(v)\ne0$, then $v^\perp$ is a nondegenerate line and hence also contains an anisotropic vector. As $\FF$ is quadratically closed, a quadratic
	form that represents a nonzero scalar represents every scalar, by scaling.
	
	\begin{lemma}\label{lem:spin-two}
		Let $s\in V\setminus\{0\}$ and $\beta\in\FF$. The element $\beta1+s$ has an idempotent word of length at most two if and only if
		\[
		\beta=2q(s)  \quad\text{ and }\quad \left(\dim V\ge3\ \text{ or }\ q(s)\ne0\right).
		\]
	\end{lemma}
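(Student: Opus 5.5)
Throughout, $\beta1+s$ is the target element. Words of length one are idempotents $e_t=\frac12 1+t$; these have $\beta=\frac12$ and $q(s)=\frac14$, so $\beta=2q(s)$ and $q(s)\ne0$. Hence they satisfy the stated condition, and it suffices to analyse products of two idempotents. We may assume both factors are nontrivial: if one factor is $1$, the value is a single idempotent, already handled. If one factor is $0$, the value is $0$, which is excluded since $s\ne0$.

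For $t,u\in V$ with $q(t)=q(u)=\frac14$, formula \eqref{eq:spin-product} gives
\[
e_t\circ e_u=\Bigl(\tfrac14+b(t,u)\Bigr)1+\tfrac12(t+u).
\]
Thus $s=\frac12(t+u)$ and $\beta=\frac14+b(t,u)$. Since $q(t+u)=\frac12+2b(t,u)$, we get $q(s)=\frac18+\frac12 b(t,u)$, so $\beta=2q(s)$ holds automatically. This proves the necessity of the scalar condition.

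For the dimension condition in necessity, assume $\dim V=2$ and $q(s)=0$. Write $t=s+d$ and $u=s-d$ with $d=\frac12(t-u)$. From $q(t)=q(u)$ we get $b(s,d)=0$, and then $q(s)+q(d)=\frac14$ gives $q(d)=\frac14$. But in a two-dimensional nondegenerate space, $s^\perp$ for isotropic $s\ne0$ is $\FF s$, which is totally isotropic. Hence $q(d)=0$, a contradiction.

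For sufficiency, given $s\ne0$ and $\beta=2q(s)$, we seek $d\in s^\perp$ with $q(d)=\frac14-q(s)$. We then set $t=s+d$ and $u=s-d$. These satisfy $q(t)=q(u)=q(s)+q(d)=\frac14$, and $e_t\circ e_u$ has vector part $s$; its scalar part is then forced to equal $2q(s)=\beta$ by the computation above. The existence of $d$ follows from the preliminary remarks. If $\dim V\ge3$, or $\dim V=2$ with $q(s)\ne0$, then $s^\perp$ contains an anisotropic vector. Since $\FF$ is quadratically closed, $q$ restricted to $s^\perp$ then represents every scalar, including $\frac14-q(s)$ (take $d=0$ if this value is zero).

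The main obstacle is the isotropic two-dimensional case in the necessity direction. The argument above handles it by observing that $s^\perp=\FF s$ forces $q(d)=0$, whereas $q(d)=\frac14$ is required.
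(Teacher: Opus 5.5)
Your proposal is correct and follows essentially the same route as the paper's proof. Both use the product formula for $e_t\circ e_u$ to force $\beta=2q(s)$, the construction $e_{s+d}\circ e_{s-d}$ with $d\in s^\perp$ and $q(d)=\frac14-q(s)$ for sufficiency, and the observation that $d=\frac12(t-u)$ would be a vector in $s^\perp=\FF s$ with $q(d)=\frac14$ to rule out the isotropic two-dimensional case.
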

	
	\begin{proof}
		For nontrivial idempotents $e_u$ and $e_v$, with $u,v\in V$ satisfying $q(u)=q(v)=1/4$,
		\[
		e_u\circ e_v
		\stackrel{\eqref{eq:spin-product}}{=}\left(\frac14+b(u,v)\right)1+\frac{u+v}{2}.
		\]
		Thus $\beta1+s=e_u\circ e_v$ implies $\beta=2q(s)$. If $\beta1+s$ itself is a nontrivial idempotent, then $\beta=\frac12$ and $q(s)=\frac14$, so again $\beta=2q(s)$. Hence allowing a unit factor does not change this necessary condition.
		
		Conversely, suppose $\beta=2q(s)$. If $s^\perp$ contains an anisotropic vector, choose $z\in s^\perp$ with $q(z)=1/4-q(s)$. Then $e_{s+z}$ and $e_{s-z}$ are idempotents, and
		\[
		e_{s+z}\circ e_{s-z}\stackrel{\eqref{eq:spin-product}}{=}2q(s)1+s.
		\]
		This applies in the two cases stated in the lemma.
		
		Finally, suppose $\dim V=2$ and $s\ne0$ is isotropic. A representation by two nontrivial idempotents would give $u,v\in V$ satisfying $q(u)=q(v)=1/4$, with $s=(u+v)/2$ and $z:=(u-v)/2$. Then $b(s,z)=0$ and $q(z)=1/4$. But $s^\perp=\FF s$, on which $q$ vanishes. A unit factor would make $\beta1+s$ a nontrivial idempotent, which is impossible since $q(s)=0$.
	\end{proof}
	
	\begin{theorem}\label{thm:spin}
		Let $\FF$ be quadratically closed with $\operatorname{char}\FF\ne2$, and let $\mathcal S:=\mathcal S(V,b)$.
		\begin{enumerate}[label=\textup{(\alph*)}]
			\item If $\dim V\ge3$, then
			\[
			\PP(\mathcal S)=(\mathcal S\setminus\FF1)\cup\{0,1\}, \qquad w(\mathcal S)=3.
			\]
			\item If $\dim V=2$, then
			\[
			\PP(\mathcal S)=\{0,1\}\cup  \{\alpha1+v:\alpha\in\FF,\ 0\ne v\in V,\ (\alpha,q(v))\ne(0,0)\}.
			\]
			In this case $w(\mathcal S)=4$, and the elements of length four are precisely
			\[
			-\frac18 1+v, \qquad 0\ne v\in V,\quad q(v)=0.
			\]
		\end{enumerate}
	\end{theorem}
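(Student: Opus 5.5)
The plan is to reduce everything to a single explicit computation: multiplying an arbitrary element by a nontrivial idempotent. For $y=\alpha1+v$ and $e_t=\frac12 1+t$ with $q(t)=\frac14$, formula \eqref{eq:spin-product} gives
\[
y\circ e_t=\Bigl(\tfrac{\alpha}{2}+b(v,t)\Bigr)1+\tfrac{v}{2}+\alpha t .
\]
Fix a target $\beta1+s$ with $s\ne0$, and fix an idempotent $e_t$ to be used as the last factor. Solving for the predecessor gives $v=2(s-\alpha t)$. Substituting this, the scalar part becomes $\beta=2b(s,t)$, which is independent of $\alpha$.

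I then require the predecessor $y$ to have length at most two. By Lemma~\ref{lem:spin-two} this means $\alpha=2q(v)=8q(s-\alpha t)$, together with the side condition there. Expanding $q(s-\alpha t)=q(s)-\alpha\beta+\alpha^2/4$ produces
\[
2\alpha^2-(1+8\beta)\alpha+8q(s)=0.
\]
This is a quadratic in $\alpha$ whose coefficients do not depend on $t$. Since $\FF$ is quadratically closed, it always has a root. So a length-$\le3$ word for $\beta1+s$ exists as soon as I can choose:
\begin{itemize}[label=\textup{(\roman*)}]
\item[\textup{(i)}] $t$ with $q(t)=\frac14$ and $b(s,t)=\beta/2$;
\item[\textup{(ii)}] a root $\alpha$ with $v=2(s-\alpha t)\ne0$;
\item[\textup{(iii)}] when $\dim V=2$, additionally $q(v)\ne0$.
\end{itemize}

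\textbf{Part (a), upper bound.} Let $\dim V\ge3$. The set in (i) is a quadric meeting an affine hyperplane, and for $s\ne0$ I expect to show it is nonempty and in fact positive-dimensional. The argument is to decompose $t$ along $s$ and a complement, using that $s^\perp$ contains anisotropic vectors and that $q$ represents every scalar. Condition (ii) fails only if $s=\alpha t$. This pins down $t$, so varying $t$ in the infinite solution set avoids it. In (a) condition (iii) is vacuous. Hence every nonscalar element has length at most $3$.

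\textbf{Part (a), lower bound and classification.} By Lemma~\ref{lem:spin-two}, any $\beta1+s$ with $\beta\ne2q(s)$ is not a word of length $\le2$. Such an element is also not itself an idempotent, since a nontrivial idempotent satisfies $\beta=\frac12=2q(s)$. So these elements have length exactly $3$, giving $w=3$. Combined with Lemma~\ref{lem:scalars}, this yields the description of $\PP(\mathcal S)$.

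\textbf{Part (b), nonrepresentability.} Let $\dim V=2$. Suppose $\beta=0$ and $q(s)=0$, and take a shortest word for $s$, which has length at least $2$. Its last factor cannot be $1$, by minimality. If it is $e_t$, then $\beta=2b(s,t)=0$ forces $t\in s^\perp=\FF s$. Then $q(t)=0\ne\frac14$, a contradiction. So these elements are not in $\PP(\mathcal S)$.

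\textbf{Part (b), the remaining elements.} For all other nonscalar elements I run the length-three construction again, now needing (iii). Condition (i) remains solvable: if $s$ is anisotropic, decompose $t$ along $s$ and $s^\perp$; if $s$ is isotropic, use a hyperbolic basis $\{s,s'\}$ and write $t=\frac{\beta}{2}s'+\mu s$. The condition $q(v)=0$ means $\alpha=2q(v)=0$. Since $v=2(s-\alpha t)$, that forces $v=2s$ and hence $q(s)=0$, with $0$ as a root of the quadratic. So (iii) can fail only when $q(s)=0$ and the root chosen is $0$. In that case I pick the other root $\alpha=(1+8\beta)/2$. This is nonzero unless $\beta=-\frac18$. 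Thus only $-\frac18 1+s$ with $q(s)=0$ escapes length three, and I must show it has length exactly $4$.

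\textbf{The length-four elements.} Here the quadratic is $2\alpha^2=0$, and this is independent of $t$. So no predecessor of length $\le2$ exists for any choice of $t$, and the length is at least $4$. For the upper bound, I choose any $t$ with $q(t)=\frac14$ and $b(s,t)=-\frac1{16}$, possible by the hyperbolic-basis construction above. I then take any $\alpha\ne0$ and set $v=2(s-\alpha t)$. The resulting predecessor $\alpha1+v$ is nonscalar, and it is not of the nonrepresentable form because $\alpha\ne0$. So it has length at most $3$, and the target has length at most $4$.

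\textbf{Main obstacle.} I expect the main difficulty to be the careful solvability of (i)–(iii) in the low-dimensional and isotropic cases. In particular, the case analysis for $\dim V=2$ must confirm that $-\frac18 1+s$ with $q(s)=0$ is the only exception.
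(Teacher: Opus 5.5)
Your argument follows the paper's proof almost step for step: the same predecessor formula $v=2(s-\alpha t)$, the same quadratic, and the same case split. However, the four-factor upper bound for $-\frac18 1+s$ with $q(s)=0$ contains a step that fails. You take any $\alpha\neq0$ and infer from ``not of the nonrepresentable form'' that the predecessor $\alpha1+v$ has length at most three. That inference overlooks the length-four elements themselves. With your $t$ one has
\[
q(v)=4\bigl(q(s)-2\alpha\, b(s,t)+\alpha^2q(t)\bigr)=\alpha\Bigl(\alpha+\frac12\Bigr),
\]
so $\alpha=-\frac12$ makes $v$ nonzero and isotropic. When $\operatorname{char}\FF=3$ (permitted, e.g.\ for the algebraic closure of $\mathbb F_3$), one has $-\frac12=-\frac18$. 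The predecessor is then again of the form $-\frac18 1+v$ with $q(v)=0$. By your own lower-bound argument it has no word of length at most three, so the construction gives only length at most five for the target. The repair is to take $\alpha\notin\{0,-\frac12\}$. Then $q(v)\neq0$, and the anisotropic case of your length-three construction applies to the predecessor. This is precisely the paper's choice $\beta\notin\{0,4\alpha\}$, which gives $q(s)=\beta(\beta-4\alpha)\neq0$.

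There are also some smaller points.
\begin{itemize}
\item You do not need to vary $t$ to secure (ii). For a root $\alpha$, substituting the quadratic into $q(v)=4q(s)-4\alpha\beta+\alpha^2$ gives $q(v)=\alpha/2$. So $v=0$ would force $\alpha=0$ and hence $v=2s\neq0$. The same identity shows at once that (iii) fails only for the root $\alpha=0$.
\item In the lower bound for $-\frac18 1+s$, say explicitly that lengths one and two are excluded by Lemma~\ref{lem:spin-two}, since $-\frac18\neq 2q(s)=0$. Hence a trailing unit reduces to that case, a trailing $0$ is impossible, and the last factor of a three-factor word is some nontrivial $e_t$. The predecessor is then automatically nonscalar, because $s=\alpha t$ is impossible for isotropic $s\neq0$.
\item The claim $w(\mathcal S)=4$ also needs the fact that a nondegenerate two-dimensional space over a quadratically closed field contains a nonzero isotropic vector.
\end{itemize}
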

	
	\begin{proof}
		The scalar restriction follows from Lemma~\ref{lem:scalars}. Fix $\alpha\in\FF$ and $0\ne v\in V$.
		
		First suppose that $\dim V\ge3$ or $q(v)\ne0$. Choose $t_0 \in V$ with $b(v,t_0)=\alpha/2$, and choose an anisotropic vector $z_0\in v^\perp$. Since $q(z_0)\ne0$, the equation $q(t_0+\gamma z_0)=1/4$ is quadratic in $\gamma$ with nonzero leading coefficient. As $\FF$ is quadratically closed, choose $\gamma\in\FF$ satisfying it, and put $t:=t_0+\gamma z_0$. Then
		\begin{equation}\label{eq:spin-t}
			q(t)=\frac14, \qquad b(v,t)=\frac\alpha2.
		\end{equation}
		Let $\beta \in \FF$ be a root of
		\begin{equation}\label{eq:spin-quadratic}
			2\beta^2-(8\alpha+1)\beta+8q(v)=0,
		\end{equation}
		and set $s:=2v-2\beta t$. Then
		\[
		q(s)\stackrel{\eqref{eq:spin-t}}{=}4q(v)-4\alpha\beta+\beta^2
		\stackrel{\eqref{eq:spin-quadratic}}{=}\frac\beta2,
		\]
		while \eqref{eq:spin-product} and \eqref{eq:spin-t} give
		\begin{equation}\label{eq:spin-predecessor}
			(\beta1+s)\circ e_t\stackrel{\eqref{eq:spin-product}}{=}\alpha1+v.
		\end{equation}
		The identity \eqref{eq:spin-predecessor} depends only on \eqref{eq:spin-t}, so it remains valid for every $\beta\in\FF$ if $s:=2v-2\beta t$. For the chosen root $\beta$ of \eqref{eq:spin-quadratic}, the equality $q(s)=\beta/2$ shows that $s=0$ would force $\beta=0$ and then $v=0$. Hence $s\ne0$. If $\dim V\ge3$, Lemma~\ref{lem:spin-two} applies directly. If $q(v)\ne0$, the constant term in \eqref{eq:spin-quadratic} is nonzero, so $\beta\ne0$ and $q(s)\ne0$, and Lemma~\ref{lem:spin-two} applies again. Thus $\ell(\alpha1+v)\le 3$.
		
		This proves the upper bound in (a). To see that it is sharp, take $v\in V$ with $q(v)\ne0$. The element $v$ does not satisfy the necessary two-factor relation in Lemma~\ref{lem:spin-two}.
		
		It remains to consider $\dim V=2$ and a nonzero isotropic vector $v$. A final nontrivial idempotent in a representation of $\alpha1+v$ forces \eqref{eq:spin-t}: indeed, if $\alpha1+v=(\beta1+s)\circ e_t$, then
		\[
		v=\frac12s+\beta t, \qquad \alpha=\frac\beta2+b(s,t).
		\]
		If $\alpha=0$, this contradicts $v^\perp=\FF v$ and $q(t)=1/4$. Since $v^2=0\ne v$, the element $v$ is not an idempotent. If $v$ had an idempotent word, then after deleting any final factors equal to $1$, a word of length at least two would remain. Its last factor could not be $0$ because $v\ne0$, and hence would be a nontrivial idempotent, contradicting the preceding argument. Thus $v\notin\PP(\mathcal S)$.
		
		Suppose $\alpha\ne0$. Extend $\{v\}$ to a hyperbolic basis $\{v,z\}$, with $q(z)=0$ and $b(v,z)=1$. Then $t:=\frac{1}{4\alpha}v+\frac\alpha2 z$ satisfies \eqref{eq:spin-t}, and \eqref{eq:spin-quadratic} reduces to $\beta(2\beta-(8\alpha+1))=0$. If $\alpha\ne-1/8$, choose $\beta:=(8\alpha+1)/2\ne0$. The preceding construction again gives at most three factors.
		
		Let $\alpha=-1/8$. Lemma~\ref{lem:spin-two} excludes a representation with at most two factors. In a three-factor word, the product of the first two factors cannot be scalar: its only possible scalar values are $0$ and $1$, whereas $\alpha1+v$ is neither zero nor an idempotent. Write this two-factor product as $\beta1+s$. Lemma~\ref{lem:spin-two}, together with the final multiplication, forces \eqref{eq:spin-quadratic}, and hence $\beta=0$ and $s=2v$. Since $s=2v$ is nonzero and isotropic, this is impossible by
		Lemma~\ref{lem:spin-two}.
		
		To obtain four factors, keep the above $t$, choose $\beta\notin\{0,4\alpha\}$, and put $s:=2v-2\beta t$. Since $q(s)=\beta(\beta-4\alpha)\ne0$, the first part of the proof gives a word of length at most three for $\beta1+s$, and \eqref{eq:spin-predecessor} remains valid for this choice of $\beta$. Appending $e_t$ gives the required fourth factor. Since $\FF$ is quadratically closed, every nondegenerate two-dimensional symmetric bilinear space over $\FF$ contains a nonzero isotropic vector, so the bound four is attained.
	\end{proof}
	
	\begin{corollary}\label{cor:two-by-two}
		Over a quadratically closed field $\FF$ of characteristic different from $2$,
		\[
		\PP(M_2(\FF)^+)=(M_2(\FF)\setminus\FF I_2)\cup\{0,I_2\}, \qquad w(M_2(\FF)^+)=3.
		\]
		For $H_2(\FF)$,
		\[
		\PP(H_2(\FF))=\{0,I_2\}\cup\{A\in H_2(\FF)\setminus\FF I_2:A^2\ne0\}, \qquad w(H_2(\FF))=4,
		\]
		and an element $A\in\PP(H_2(\FF))$ has length four if and only if there exists $N\in H_2(\FF)$ such that
		\[
		A=-\frac18 I_2+N, \qquad N\ne0,\quad N^2=0.
		\]
		Every nonscalar diagonal $2\times2$ matrix has an idempotent word in $H_2(\FF)$ of length at most three. In particular,
		\begin{equation}\label{eq:diagonal-two}
			\ell_{H_2(\FF)}(\diag(\delta,1))\le 3,
			\qquad \delta\in\FF.
		\end{equation}
	\end{corollary}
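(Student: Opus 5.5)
The plan is to realize $M_2(\FF)^+$ and $H_2(\FF)$ as spin factors and then read everything off from Theorem~\ref{thm:spin}.

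\textbf{Identification for $M_2(\FF)^+$.} Write each $A\in M_2(\FF)$ as $A=\frac12(\tr A)I_2+A_0$ with $\tr A_0=0$. For traceless $X,Y$ the Cayley--Hamilton theorem gives $X^2=-\det(X)I_2$. Polarizing,
\[
X\circ Y=\tfrac12\tr(XY)\,I_2 .
\]
Put $V:=\{X\in M_2(\FF):\tr X=0\}$, which is $3$-dimensional, and $b(X,Y):=\frac12\tr(XY)$. This form is nondegenerate because the trace form on $M_2(\FF)$ is nondegenerate and $V$ is the orthogonal complement of $I_2$. The product \eqref{eq:spin-product} then coincides with $\circ$. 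Hence $M_2(\FF)^+\cong\mathcal S(V,b)$ with $\dim V=3$, and Theorem~\ref{thm:spin}(a) gives both the description of $\PP$ and $w=3$.

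\textbf{Identification for $H_2(\FF)$.} Take $V:=\{X\in H_2(\FF):\tr X=0\}$, which has basis $E_{11}-E_{22}$ and $E_{12}+E_{21}$. The restriction of $b$ is nondegenerate, since $\diag(1,1)$ in these coordinates gives $b=\diag(1,1)$. Thus $H_2(\FF)\cong\mathcal S(V,b)$ with $\dim V=2$, and we translate Theorem~\ref{thm:spin}(b). Under the isomorphism, $A=\alpha I_2+v$ satisfies
\[
A^2=(\alpha^2+q(v))I_2+2\alpha v .
\]
Since $v\ne0$ for nonscalar $A$, we have $A^2=0$ exactly when $\alpha=0$ and $q(v)=0$. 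This matches the excluded condition $(\alpha,q(v))=(0,0)$, so the description of $\PP(H_2(\FF))$ follows.

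For the length-four elements, Theorem~\ref{thm:spin}(b) gives $-\frac18 I_2+v$ with $v\ne0$ and $q(v)=0$. For a traceless $N$, $N^2=q(N)I_2$, so $q(v)=0$ is the same as $N^2=0$. It remains to check that any symmetric $N\ne0$ with $N^2=0$ is automatically traceless: a nilpotent $2\times2$ matrix has $\tr N=0$. This gives the stated form of the length-four elements.

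\textbf{Diagonal matrices.} A nonscalar diagonal matrix $\diag(a,c)$ with $a\ne c$ equals $\alpha I_2+v$ with $v=\frac{a-c}{2}(E_{11}-E_{22})$. Then $q(v)=\frac{(a-c)^2}{4}\ne0$, so $v$ is anisotropic. The proof of Theorem~\ref{thm:spin} shows that in the case $q(v)\ne0$ the element has length at most $3$. For \eqref{eq:diagonal-two}, the case $\delta\ne1$ is nonscalar and is covered by the previous sentence. The case $\delta=1$ is $I_2$, which has length $1$.

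\textbf{Expected difficulty.} The only real care point is verifying that $\circ$ matches \eqref{eq:spin-product}, including the normalization $b=\frac12\tr$. This amounts to the polarization identity $XY+YX=\tr(XY)I_2$ for traceless $2\times2$ matrices, which comes directly from Cayley--Hamilton.
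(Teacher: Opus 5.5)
Your proposal is correct and follows essentially the same route as the paper. You identify $M_2(\FF)^+$ and $H_2(\FF)$ with spin factors on their traceless parts via $b(X,Y)=\frac12\tr(XY)$, read off Theorem~\ref{thm:spin}, and reuse the three-factor construction from its proof for anisotropic traceless parts in the diagonal case. The only point to tighten is nondegeneracy on the $3$-dimensional traceless space: $V=I_2^{\perp}$ alone is not enough, and you also need $\tr(I_2^2)=2\neq0$, so that $M_2(\FF)=\FF I_2\oplus V$ is an orthogonal decomposition. The paper avoids this by computing $q=\alpha^2+\beta\gamma$ directly.
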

	
	\begin{proof}
		For $A\in M_2(\FF)$, set $A_0:=A-\frac12\tr(A)I_2$, so that $\tr(A_0)=0$.
		If $B_0\in M_2(\FF)$ is also traceless, the Cayley–Hamilton theorem and polarization give
		\[
		A_0^2=-\det(A_0)I_2, \qquad A_0\circ B_0=\frac12\tr(A_0B_0)I_2.
		\]
		These identities hold over every field of characteristic different from $2$. Thus $M_2(\FF)^+$ is the spin factor on its traceless subspace, with bilinear form
		\[
		b(A_0,B_0):=\frac12\tr(A_0B_0).
		\]
		The associated quadratic form is
		\[
		q\left(\begin{bmatrix}\alpha&\beta\\\gamma&-\alpha\end{bmatrix}\right):=\alpha^2+\beta\gamma, \qquad \alpha,\beta,\gamma \in \FF.
		\]
		This quadratic form is nondegenerate and has dimension three. Its restriction to the traceless symmetric matrices is $\alpha^2+\beta^2$, a nondegenerate form of dimension two. Theorem~\ref{thm:spin} therefore gives both matrix descriptions and the stated length conclusions. In the symmetric model, the nonzero nilpotents are precisely the nonzero traceless matrices $N$ with $q(N)=0$, equivalently $N^2=0$.
		
		If $A=\diag(\alpha,\beta)$ with $\alpha\ne\beta$, its traceless part satisfies $q(A_0)=(\alpha-\beta)^2/4\ne0$. The three-factor construction in the proof of Theorem~\ref{thm:spin} applies. The case $\delta=1$ in \eqref{eq:diagonal-two} is the identity matrix.
	\end{proof}
	
	Without quadratic closedness, the quadratic equation \eqref{eq:spin-quadratic} need not have a root in $\FF$. The following lemma isolates the resulting discriminant obstruction to short idempotent words and will be used in Proposition~\ref{prop:exact-two}.
	
	\begin{lemma}\label{lem:spin-discriminant}
		Let $\FF$ be any field of characteristic different from $2$, and let $x=\alpha1+v\in\mathcal S(V,b)$ be nonscalar, where $\alpha\in\FF$ and $v\in V$. If $x$ has an idempotent word of length at most three, then
		\[
		D(x):=(8\alpha+1)^2-64q(v)
		\]
		is a square in $\FF$. Moreover, if $\theta\in\FF$ is not a square and $u\in V$ satisfies $q(u)=(1-\theta)/256$, then $-\frac1{16}1+u$ has no idempotent word of length at most four.
	\end{lemma}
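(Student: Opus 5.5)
The plan is to exploit the description of the last multiplication used in the proof of Theorem~\ref{thm:spin}: if $\alpha1+v=(\beta1+s)\circ e_t$ with $q(t)=\frac14$, then
\[
v=\tfrac12 s+\beta t,\qquad \alpha=\tfrac\beta2+b(s,t).
\]
This only uses \eqref{eq:spin-product}, so it holds over any field. From these two relations I would derive $b(v,t)=\frac\alpha2$, then $s=2v-2\beta t$, and finally the key identity
\[
q(s)=4q(v)-4\alpha\beta+\beta^2,
\]
valid for every $\beta$. Before using it, I will normalize words. Trailing unit factors can be deleted. A factor $0$ makes the value $0$, which is scalar, so it cannot occur. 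If an intermediate prefix is scalar, it lies in $\{0,1\}$ by Lemma~\ref{lem:scalars}; a prefix $0$ is impossible, and a prefix $1$ can be dropped, shortening the word. Hence I may assume that all factors are nontrivial idempotents and all prefixes are nonscalar.

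\emph{First assertion.} I split by the length of the normalized word.
\begin{itemize}[label=--]
\item For length one, $x=e_t$, so $\alpha=\frac12$ and $q(v)=\frac14$. Then $D(x)=25-16=9$, a square.
\item For length two, the computation $e_u\circ e_{u'}=(\frac14+b(u,u'))1+\frac{u+u'}2$ from Lemma~\ref{lem:spin-two} is field-independent. It gives $\alpha=2q(v)$, so $D(x)=(8\alpha+1)^2-32\alpha=(8\alpha-1)^2$.
\item For length three, write $x=(\beta1+s)\circ e_t$, where $\beta1+s$ is a nonscalar word of length at most two. By the two previous cases (again over any field), $\beta=2q(s)$. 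Substituting into the key identity yields
\[
2\beta^2-(8\alpha+1)\beta+8q(v)=0,
\]
which is exactly \eqref{eq:spin-quadratic}. Thus this quadratic has the root $\beta\in\FF$, and its discriminant, which is $D(x)$, is a square.
\end{itemize}

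\emph{Second assertion.} Let $x=-\frac1{16}1+u$. Since $\theta$ is not a square, $\theta\ne1$, so $u\ne0$ and $x$ is nonscalar. We have
\[
D(x)=\tfrac14-\tfrac{1-\theta}4=\tfrac\theta4,
\]
which is a nonsquare, so by the first assertion $x$ has no word of length at most three. Suppose now $x$ has a normalized word of length four, so $x=(\beta1+s)\circ e_t$ with $y:=\beta1+s$ a nonscalar word of length three. The key identity with $\alpha=-\frac1{16}$ and $q(u)=\frac{1-\theta}{256}$ gives
\[
q(s)=\tfrac{1-\theta}{64}+\tfrac\beta4+\beta^2,
\]
and therefore
\[
D(y)=(8\beta+1)^2-64q(s)=\theta.
\]
This is not a square, contradicting the first assertion applied to $y$.

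The main obstacle is purely bookkeeping: making sure the normalization of words (removing unit factors, excluding $0$ and scalar prefixes) is airtight. The algebra itself reduces to the single key identity for $q(s)$.
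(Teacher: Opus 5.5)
Your proposal is correct and follows essentially the same route as the paper: you peel off the last nontrivial idempotent, combine the identity $q(s)=4q(v)-4\alpha\beta+\beta^2$ with the two-factor relation $\beta=2q(s)$ to obtain the quadratic whose discriminant is $D(x)$, and for the second assertion show that the predecessor satisfies $D(\beta1+s)=\theta$. The differences are cosmetic. You normalize words explicitly and treat length one separately. You also first exclude length at most three via the first assertion, whereas the paper directly notes that the predecessor has length at most three.
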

	
	\begin{proof}
		If $x$ has length at most two, the calculation in the necessity part of the proof of Lemma~\ref{lem:spin-two}, which does not require $\FF$ to be quadratically closed, gives $\alpha=2q(v)$ and hence $D(x)=(8\alpha-1)^2$. Suppose now that $x$ has a shortest word of length three. Its last factor is nontrivial: it cannot be $0$ because $x$ is nonscalar, while a final unit could be deleted, contradicting minimality. Its two-factor predecessor is nonscalar: if it were scalar, Lemma~\ref{lem:scalars} would make it $0$ or $1$, forcing $x$ to have length one. Write
		\[
		x=(\beta1+s)\circ e_t,
		\qquad \beta\in\FF,\quad s,t\in V,\quad q(t)=\frac14.
		\]
		Comparing components gives
		\[
		s=2v-2\beta t,\qquad b(v,t)=\frac{\alpha}{2}.
		\]
		The necessary two-factor relation for the predecessor is $\beta=2q(s)$. Hence
		\[
		2\beta^2-(8\alpha+1)\beta+8q(v)=0,
		\]
		whose discriminant is $D(x)$. Thus $D(x)$ is a square in $\FF$.
		
		For the second assertion, put $\alpha:=-1/16$ and $x:=\alpha1+u$. Since $\theta$ is not a square, $\theta\ne1$, so $q(u)\ne0$ and hence $u\ne0$. Moreover,
		\[
		\alpha^2-q(u)=\frac{\theta}{256}\ne0,
		\]
		so $x$ is neither zero nor an idempotent. Suppose that $x$ has an idempotent word of length at most four. After deleting any trailing unit factors, we may assume that the last factor is nontrivial:
		\[
		x=(\beta1+s)\circ e_t,
		\qquad \beta\in\FF,\quad s,t\in V,\quad q(t)=\frac14.
		\]
		The predecessor is nonscalar: otherwise Lemma~\ref{lem:scalars} would make it $0$ or $1$, and $x$ would be zero or an idempotent. Comparing components gives
		\[
		s=2u-2\beta t,\qquad b(u,t)=\frac{\alpha}{2}.
		\]
		Thus
		\[
		q(s)=4q(u)-4\alpha\beta+\beta^2,
		\]
		and
		\[
		D(\beta1+s)
		=16(16\alpha+1)\beta+1-256q(u)
		=\theta.
		\]
		The predecessor has length at most three, contradicting the first assertion.
	\end{proof}
	
	\section{Full matrix algebras}\label{sec:matrices}
	
	Throughout this section, $\FF$ is an arbitrary field of characteristic different from $2$. Word lengths are always taken in the relevant full matrix Jordan algebra. Corollary~\ref{cor:two-by-two} already settles representability and the width of
	$M_2(\FF)^+$ when $\FF$ is quadratically closed. We now drop this hypothesis and determine the exact length of every nonscalar $2\times2$ matrix.
	
	\begin{proposition}\label{prop:exact-two}
		Let $A\in M_2(\FF)$ be a nonscalar matrix, and set
		\[
		\tau:=\tr A,\qquad \delta:=\det A,\qquad \eta:=1+8\tau+64\delta.
		\]
		Then $A\in\PP(M_2(\FF)^+)$ and
		\[
		\ell(A)=
		\begin{cases}
			1,&A^2=A,\\
			2,&A^2\ne A,\quad 4\delta=\tau(\tau-1),\\
			3,&4\delta\ne\tau(\tau-1),\quad\eta\text{ is a square in }\FF,\\
			4,&\eta\text{ is not a square in }\FF,\quad \tau\ne-1/8,\\
			5,&\eta\text{ is not a square in }\FF,\quad \tau=-1/8.
		\end{cases}
		\]
		Here zero is included among the squares. Consequently, $w(M_2(\FF)^+)=3$ if $\FF$ is quadratically closed, and $w(M_2(\FF)^+)=5$ otherwise.
	\end{proposition}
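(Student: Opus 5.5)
The plan is to reduce everything to the spin-factor results, using the identification from the proof of Corollary~\ref{cor:two-by-two}, which is valid over every field of characteristic $\ne2$. There $M_2(\FF)^+=\mathcal S(V,b)$ with $V$ the traceless matrices. Write $A=\alpha I_2+v$ with $\alpha=\tau/2$ and $v=A_0\ne0$. Then $q(v)=-\det A_0=\tau^2/4-\delta$.

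\emph{Translating the criteria.} The two-factor condition $\alpha=2q(v)$ of Lemma~\ref{lem:spin-two} becomes $\tau/2=\tau^2/2-2\delta$, that is, $4\delta=\tau(\tau-1)$. The discriminant of Lemma~\ref{lem:spin-discriminant} becomes
\[
D(A)=(4\tau+1)^2-16\tau^2+64\delta=1+8\tau+64\delta=\eta .
\]
Case $1$ is the idempotent case. The necessity halves of cases $2$ and $3$ come from the computation in Lemma~\ref{lem:spin-two}, which needs no closedness; it also shows that $\eta$ is a square when the length is at most $2$. The lower bound $\ell(A)\ge4$ when $\eta$ is not a square comes from the first assertion of Lemma~\ref{lem:spin-discriminant}.

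\emph{Sufficiency over an arbitrary field.} This is the main obstacle: the proofs of Lemma~\ref{lem:spin-two} and Theorem~\ref{thm:spin} solve quadratics using quadratic closedness. Here the key fact I would exploit is that $q=\alpha^2+\beta\gamma$ on $V$ contains a hyperbolic plane. Hence $q$ is universal, and so is every nondegenerate restriction to a subspace containing an isotropic vector.

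For length $2$, I would rather proceed concretely. For $4\delta=\tau(\tau-1)$, exhibit idempotents $E=\begin{bmatrix}1&a\\0&0\end{bmatrix}$ and $F=\begin{bmatrix}1&0\\c&0\end{bmatrix}$, or conjugates of them, with $E\circ F$ equal to the companion matrix of $t^2-\tau t+\delta$. Then use similarity invariance: every nonscalar $2\times2$ matrix is cyclic, hence similar to that companion matrix. If a family of pairs like this does not cover everything, I would instead pick $z\in s^\perp$ with $q(z)=\tfrac14-q(s)$. When $s^\perp$ fails to represent that value, I would perturb the choice using the freedom in $t$.

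For length $3$, choose $t$ on the affine plane $b(v,t)=\alpha/2$ with $q(t)=\tfrac14$. This is solvable since $q$ restricted to that plane is a quadratic of rank $\ge2$ containing a hyperbolic direction; I would check this via explicit coordinates. Take $\beta$ a root of \eqref{eq:spin-quadratic}, which exists since $\eta$ is a square. Then $\beta1+s$ satisfies the two-factor relation and has length $\le2$ by the previous step, and \eqref{eq:spin-predecessor} gives length $\le3$.

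\emph{Lengths $4$ and $5$.} Assume $\eta$ is not a square. For an arbitrary $\beta$, with $s=2v-2\beta t$ and $t$ as above, the proof of Lemma~\ref{lem:spin-discriminant} gives
\[
D(\beta1+s)=16(8\tau+1)\beta+\eta .
\]
If $\tau\ne-1/8$, choose $\beta$ making this equal to $1$, a square. Adjust if needed so that the predecessor is nonscalar. Then the predecessor has length $\le3$, so $\ell(A)\le4$, and hence $\ell(A)=4$.

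If $\tau=-1/8$, every predecessor has discriminant $\eta$, a non-square, so it has length $\ge4$, and $\ell(A)\ge5$. Conversely, choose $\beta$ with $\tr(\beta1+s)=2\beta\ne-1/8$, and with the predecessor nonscalar. By the previous subcase the predecessor has length $\le4$, hence $\ell(A)\le5$.

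\emph{Widths.} If $\FF$ is quadratically closed, $\eta$ is always a square, so $w=3$; this matches Corollary~\ref{cor:two-by-two}. Otherwise there is a non-square $\theta$. Since $q$ is universal, there is $u$ with $q(u)=(1-\theta)/256$, and $-\tfrac1{16}I_2+u$ has $\tau=-1/8$ and $\eta=\theta$. By the case analysis it has length $5$. All lengths are at most $5$, so $w=5$.
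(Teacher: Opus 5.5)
Your architecture is the paper's. You fix a final idempotent $e_t$ with $q(t)=\frac14$ and $b(v,t)=\frac\alpha2$, take the predecessor $\beta1+s$ with $s=2v-2\beta t$, and steer its discriminant through $\beta$. Indeed, once $A$ is put in the form $\begin{bmatrix}\alpha&\gamma\\1&\alpha\end{bmatrix}$, the paper's $B_\beta$ is exactly your $\beta I_2+s$ for $t=E-\frac12I_2$ with $E=\begin{bmatrix}1&\alpha\\0&0\end{bmatrix}$.

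\textbf{The existence of $t$.} This is where you depart from the paper, and your justification is wrong. The direction space of the affine plane $\{t:b(v,t)=\alpha/2\}$ is $v^\perp$. If $q(v)\ne0$, a computation in the normal form shows that $q|_{v^\perp}$ is $x^2-q(v)z^2$ in suitable coordinates. This form is anisotropic whenever $q(v)$ is a nonsquare, e.g.\ $q(v)=2$ over $\mathbb{Q}$. If $q(v)=0$, the quadratic part on $v^\perp$ has rank one. So in general there is neither a hyperbolic direction nor rank $\ge2$. An anisotropic binary form need not represent a prescribed value, so your stated reason does not prove solvability.

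The claim is nevertheless true. For $q(v)\ne0$, write $t=\frac{\alpha}{2q(v)}v+w$ with $w\in v^\perp$; you need
\[
q(w)=\frac14-\frac{\alpha^2}{4q(v)}=\Bigl(\frac12\Bigr)^2-q(v)\Bigl(\frac{\alpha}{2q(v)}\Bigr)^2,
\]
which $x^2-q(v)z^2$ visibly represents. For $q(v)=0$ and $\alpha\ne0$, the equation is linear along $v$. For $q(v)=\alpha=0$, move $v$ to $E_{21}$ and take $t=\frac12\diag(1,-1)$. The paper's explicit $E$ covers all cases at once. Because the lower-left entry of $B_\beta$ is $2$, it also makes your ``adjust if needed'' nonscalarity checks unnecessary.

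\textbf{Smaller points.}
\begin{enumerate}[label=\textup{(\roman*)}]
\item The constant term in $D(\beta1+s)$ is $1-256q(v)$, not $\eta$. It equals $4\eta$ when $\tau=-1/8$, which is what the paper uses.
\item The element $-\frac1{16}I_2+u$ with $q(u)=(1-\theta)/256$ has $\eta=\theta/4$, not $\theta$.
\item Slips (i) and (ii) are harmless. Only square classes matter, together with the slope $16(8\tau+1)$, which is nonzero when $\tau\ne-1/8$.
\item Your length-two family needs no fallback. With $a=1$ and $c=\tau-1$, the product $E\circ F$ is nonscalar with trace $\tau$ and determinant $\tau(\tau-1)/4$. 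Hence it is similar to every nonscalar $A$ with $4\delta=\tau(\tau-1)$.
\item For the lower bound $\ell(A)\ge5$, you must also discard trailing unit factors and exclude a scalar predecessor. The second assertion of Lemma~\ref{lem:spin-discriminant}, applied with the nonsquare $4\eta$, does exactly this, so cite it directly.
\end{enumerate}
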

	
	\begin{proof}
		Put $\alpha:=\tau/2$ and $\gamma:=\alpha^2-\delta$. Since $A-\alpha I_2$ is a nonscalar $2\times2$ matrix, it is cyclic. Hence, after similarity,
		\begin{equation}\label{eq:two-normal-form}
			A=\begin{bmatrix}\alpha&\gamma\\1&\alpha\end{bmatrix}.
		\end{equation}
		Both the spin-factor identification established in the proof of Corollary~\ref{cor:two-by-two} and the necessity calculation in the proof of Lemma~\ref{lem:spin-two} are valid over every field of characteristic different from $2$. Under this identification, $A=\alpha I_2+(A-\alpha I_2)$ and $q(A-\alpha I_2)=\gamma$. If $A$ has length at most two, that calculation gives $\alpha=2\gamma$, equivalently $4\delta=\tau(\tau-1)$. This condition is also sufficient, since
		\[
		\begin{bmatrix}1&2\gamma\\0&0\end{bmatrix}
		\circ\begin{bmatrix}0&0\\2&1\end{bmatrix}
		=\begin{bmatrix}2\gamma&\gamma\\1&2\gamma\end{bmatrix} = A,
		\]
		and both factors are idempotent.
		
		For every $\beta\in\FF$, set
		\[
		B_\beta:=\begin{bmatrix}0&2\gamma-2\alpha\beta\\2&2\beta\end{bmatrix},
		\qquad E:=\begin{bmatrix}1&\alpha\\0&0\end{bmatrix}.
		\]
		Then $B_\beta$ is nonscalar, $E^2=E$ and $B_\beta\circ E=A$. By the criterion just obtained, $B_\beta$ has length at most two if and only if
		\[
		2\beta^2-(8\alpha+1)\beta+8\gamma=0.
		\]
		Its discriminant is $(8\alpha+1)^2-64\gamma=\eta$. Hence, if $\eta$ is a square, then $A$ has length at most three. Conversely, if $A$ has length at most three, Lemma~\ref{lem:spin-discriminant}, via the same identification, shows that $\eta$ is a square.
		
		Suppose that $\eta$ is not a square. For $B_\beta$, we have
		\begin{equation}\label{eq:two-length-discriminant}
			1+8\tr B_\beta+64\det B_\beta =1-256\gamma+16(16\alpha+1)\beta.
		\end{equation}
		If $\alpha\ne-1/16$, put $\beta:=(256\gamma-1)/(16(16\alpha+1))$, so that \eqref{eq:two-length-discriminant} is zero. The preceding case gives at most three factors for $B_\beta$, and thus four for $A$.
		
		If $\alpha=-1/16$, take $\beta:=0$. Then $B_0$ has trace zero and \eqref{eq:two-length-discriminant} gives
		\[
		1+8\tr B_0+64\det B_0=4\eta,
		\]
		which is not a square. Since $\frac12\tr(B_0)=0\ne-1/16$, the preceding paragraph gives $\ell(B_0)\le 4$, while Lemma~\ref{lem:spin-discriminant}, via the same identification, gives $\ell(B_0)>3$. Hence $\ell(B_0)=4$, and therefore $A$ has length at most five. Finally, $\gamma=(1-4\eta)/256$, so the second assertion of Lemma~\ref{lem:spin-discriminant}, applied with the nonsquare $4\eta$, shows that $A$ has no idempotent word of length at most four. Since $\tau=2\alpha$, the exceptional case $\alpha=-1/16$ is precisely $\tau=-1/8$. The one-factor condition is idempotence, and all five cases follow. If $\FF$ is quadratically closed, Corollary~\ref{cor:two-by-two} gives width three. Otherwise, choose a nonsquare $\eta\in\FF$ and take $\alpha:=-1/16$ and $\gamma:=(1-4\eta)/256$ in \eqref{eq:two-normal-form}; the resulting matrix has length five.
	\end{proof}
	
	\begin{lemma}\label{lem:block-identities}
		Let $B\in M_{p,q}(\FF)$ and $C\in M_{q,p}(\FF)$, where $p,q\ge1$.
		\begin{enumerate}[label=\textup{(\alph*)}]
			\item The matrix $\begin{bmatrix}0&B\\C&0\end{bmatrix}$ has an idempotent word of length at most four.
			\item For every $T\in M_{p,q}(\FF)$, the matrix $\begin{bmatrix}0&B\\C&CT\end{bmatrix}$ has an idempotent word of length at most six.
		\end{enumerate}
	\end{lemma}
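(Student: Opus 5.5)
The plan is to write down explicit idempotent words, using only block-triangular idempotents and the two diagonal idempotents $I_p\oplus 0$ and $0\oplus I_q$. For $X\in M_{p,q}(\FF)$ and $Y\in M_{q,p}(\FF)$ I will use
\[
E_X:=\begin{bmatrix}I_p&X\\0&0\end{bmatrix},\qquad F_Y:=\begin{bmatrix}0&0\\Y&I_q\end{bmatrix}.
\]
Both are idempotent by direct multiplication. The basic computation, for a block matrix $Z=[Z_{ij}]$, is
\[
Z\circ E_X=\frac12\begin{bmatrix}2Z_{11}+XZ_{21}& Z_{12}+XZ_{22}+Z_{11}X\\ Z_{21}& Z_{21}X\end{bmatrix}.
\]
Two special cases follow from it:
\[
Z\circ(0\oplus I_q)=\begin{bmatrix}0&\tfrac12Z_{12}\\ \tfrac12Z_{21}&Z_{22}\end{bmatrix},\qquad
Z\circ(I_p\oplus0)=\begin{bmatrix}Z_{11}&\tfrac12Z_{12}\\ \tfrac12Z_{21}&0\end{bmatrix}.
\]
So multiplying by the two diagonal idempotents kills one diagonal block while halving the off-diagonal blocks.

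For (a), I start from the two-factor word
\[
E_X\circ F_Y=\frac12\begin{bmatrix}XY&X\\Y&YX\end{bmatrix}.
\]
Multiplying by $0\oplus I_q$ kills the upper-left block and gives $\begin{bmatrix}0&X/4\\Y/4&YX/2\end{bmatrix}$. Multiplying then by $I_p\oplus 0$ kills the lower-right block and gives $\begin{bmatrix}0&X/8\\Y/8&0\end{bmatrix}$. Taking $X:=8B$ and $Y:=8C$ yields a word of length four. No rank or field assumptions are needed beyond $\operatorname{char}\FF\ne2$.

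For (b), I start from the length-four word of (a) for $Z:=\begin{bmatrix}0&4B\\4C&0\end{bmatrix}$. Since $Z_{11}=0$ and $Z_{22}=0$, the basic computation gives
\[
Z\circ E_X=\begin{bmatrix}2XC&2B\\2C&2CX\end{bmatrix}.
\]
A further multiplication by $0\oplus I_q$ removes the upper-left block and gives $\begin{bmatrix}0&B\\C&2CX\end{bmatrix}$. With $X:=T/2$ this is the required matrix, obtained with $4+2=6$ factors.

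The only real obstacle is finding the right ordering of factors, because the diagonal idempotents halve the off-diagonal blocks. Once the sequence above is fixed, the proof is just verifying the block identities displayed here, which I would check carefully but expect to be routine.
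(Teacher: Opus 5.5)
Your proposal is correct and follows essentially the paper's argument: for (a) you get the same two-factor product $\begin{bmatrix}32BC&4B\\4C&32CB\end{bmatrix}$ and then apply the two diagonal idempotents, and for (b) you apply the same factor $\begin{bmatrix}I_p&T/2\\0&0\end{bmatrix}$ to the word for $\begin{bmatrix}0&4B\\4C&0\end{bmatrix}$. The one difference is in (b): you finish with $0\oplus I_q$ instead of $I_p\oplus 0$, which produces $\begin{bmatrix}0&B\\C&CT\end{bmatrix}$ directly, so you do not need the paper's preliminary similarity by $\begin{bmatrix}I_p&T\\0&I_q\end{bmatrix}$.
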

	
	\begin{proof}
		For (a), consider the idempotents
		\[
		E_1:=\begin{bmatrix}I_p&0\\8C&0\end{bmatrix},\quad
		E_2:=\begin{bmatrix}0&8B\\0&I_q\end{bmatrix},\quad
		E_3:=\begin{bmatrix}I_p&0\\0&0\end{bmatrix},\quad
		E_4:=\begin{bmatrix}0&0\\0&I_q\end{bmatrix}.
		\]
		Direct multiplication gives
		\[
		E_1\circ E_2=
		\begin{bmatrix}32BC&4B\\4C&32CB\end{bmatrix},
		\qquad
		(E_1\circ E_2)\circ E_3=
		\begin{bmatrix}32BC&2B\\2C&0\end{bmatrix},
		\]
		and hence
		\[
		((E_1\circ E_2)\circ E_3)\circ E_4=
		\begin{bmatrix}0&B\\C&0\end{bmatrix}.
		\]
		For (b), the similarity
		\[
		\begin{bmatrix}I_p&T\\0&I_q\end{bmatrix}
		\begin{bmatrix}0&B\\C&CT\end{bmatrix}
		\begin{bmatrix}I_p&-T\\0&I_q\end{bmatrix}
		=\begin{bmatrix}TC&B\\C&0\end{bmatrix}
		\]
		reduces the problem to the matrix on the right. By (a), the matrix $\begin{bmatrix}0&4B\\4C&0\end{bmatrix}$ has an idempotent word of length at most four. Moreover,
		\[
		\left(\begin{bmatrix}0&4B\\4C&0\end{bmatrix}
		\circ\begin{bmatrix}I_p&T/2\\0&0\end{bmatrix}\right)
		\circ\begin{bmatrix}I_p&0\\0&0\end{bmatrix}
		=\begin{bmatrix}TC&B\\C&0\end{bmatrix}.
		\]
		The other two matrices are idempotent, so six factors suffice.
	\end{proof}
	
	\begin{proposition}\label{prop:cyclic}
		Every cyclic matrix of order $n\ge3$ has an idempotent word of length at most seven. Six factors suffice when $n$ is even or the matrix is singular.
	\end{proposition}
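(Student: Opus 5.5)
The plan is to reduce everything to Lemma~\ref{lem:block-identities}(b). That lemma gives six factors for any matrix of the form $\begin{bmatrix}0&B\\C&CT\end{bmatrix}$. So it suffices to find, for a cyclic $A$, a direct sum decomposition $\FF^n=U\oplus W$ with $\dim U=p$ and $\dim W=q$ having two properties. First, $A(U)\subseteq W$, so the top-left block vanishes. Second, writing $D$ for the $W$-component of $A|_W$, the column space of $D$ lies in the column space of the block $C=A|_U\colon U\to W$. Since $\ell$ is invariant under similarity, this settles $A$.

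\emph{Even $n=2p$.} Let $u$ be a cyclic vector. Put
\[
U:=\Span\{u,A^2u,\ldots,A^{2p-2}u\},\qquad W:=AU=\Span\{Au,A^3u,\ldots,A^{2p-1}u\}.
\]
These $2p$ vectors are $u,Au,\ldots,A^{n-1}u$, so they form a basis and $\FF^n=U\oplus W$. By construction $A(U)=W$. Hence the top-left block is $0$ and $C$ is a square matrix. It is invertible, since the images of the basis vectors of $U$ are the basis vectors of $W$. Then $D=C(C^{-1}D)$, so $T:=C^{-1}D$ works, and Lemma~\ref{lem:block-identities}(b) gives six factors.

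\emph{Odd $n=2p+1$, singular.} I would use the same even/odd splitting of the Krylov basis, but choose the cyclic vector, or reorder the basis, so that $\ker A$ is exploited. The aim is for the single offending column, the one coming from $A^{2p}u$ or $A^{2p+1}u=-\sum_i c_iA^iu$ with $c_0=0$, to land in the correct subspace. Concretely, I would take $W$ of dimension $p+1$ to contain $AU$ together with a kernel-adapted vector. I would then arrange that this vector is sent into $U\oplus AU$, so that the column of $D$ it produces lies in $\operatorname{col}(C)$. Here $C$ is injective of size $(p+1)\times p$, so $\operatorname{col}(C)=AU$ has codimension one in $W$. The condition to check is exactly that $D$ maps into $AU$, that is, that the $W$-component of $A(W)$ lies in $AU$. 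Singularity should provide the one degree of freedom needed, by using the kernel direction as the extra basis vector of $W$.

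\emph{Odd $n$, general.} Spend one extra factor. Write $A=B\circ E$ with $E$ a suitable idempotent, for instance an idempotent of corank one adapted to the cyclic basis, in the spirit of the factorization $B_\beta\circ E=A$ in Proposition~\ref{prop:exact-two}. Choose $B$ singular and cyclic, or of the block form above, with a free parameter as in $B_\beta$ used to force singularity. This gives $6+1=7$ factors.

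I expect the main obstacle to be the odd case. There the codimension-one mismatch between $\operatorname{col}(C)$ and $W$ has to be absorbed, either through singularity (to get six) or through the extra factor $E$ (to get seven). Verifying that the chosen $B$ really satisfies the column-space condition is the key computation.
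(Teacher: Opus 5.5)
Your even case is correct and coincides with the paper's argument. The two odd cases, however, are only announced, and the concrete hints you give for them do not work as stated.

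\textbf{Odd singular case.} Keeping the even-case splitting $U=\Span\{u,A^2u,\ldots,A^{2p-2}u\}$ and adjoining an extra vector $w$ to $AU$ fails for every choice of $w$. Indeed, $\FF^n=U\oplus W$ requires $w$ to have a nonzero coefficient $\lambda$ on $A^{2p}u$. Then the column $A(A^{2p-1}u)=A^{2p}u$ of $D$ has $w$-component $\lambda^{-1}\ne0$, so it does not lie in $AU=\im C$.

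With the other parity, $U=\Span\{Au,A^3u,\ldots,A^{2p-1}u\}$, a kernel vector still fails in general. It lies in $U\oplus AU=\Span\{Au,\ldots,A^{2p}u\}$ whenever $0$ is a multiple root of the minimal polynomial. For example, for the nilpotent $3\times3$ Jordan block, $\ker A=\FF A^2u=AU$.

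The choice that works is to put the cyclic vector itself into $W$:
\[
U=\Span\{Au,A^3u,\ldots,A^{2p-1}u\},\qquad W=\Span\{u,A^2u,\ldots,A^{2p}u\}.
\]
Then $A(U)\subseteq W$, and $A$ sends each $A^{2i}u$ with $0\le i<p$ into $U$. The only remaining column is $A^{2p+1}u=\sum_i c_iA^iu$. Its $u$-coefficient $c_0$ vanishes exactly because $A$ is singular, so $D$ maps into $AU=\im C$.

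\textbf{Odd invertible case.} You say $B$ should be singular and cyclic, but you exhibit no $E$ and $B$ with $B\circ E=A$ having these properties. The free parameter in $B_\beta$ is a $2\times2$ device. In order $n$ you need two things: a reason why $B\circ E=A$ holds along a whole one-parameter family, and a reason why cyclicity survives when the parameter is specialized to force singularity.

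The missing idea, which is what the paper does, runs as follows.
\begin{enumerate}
\item Take the rank-one idempotent $P=u\varphi$ with $\varphi(u)=1$ and $\varphi(Au)=0$. Then $PAP=0$, and $(A+PA+AP+\tau P)\circ(I-P)=A$ for every $\tau\in\FF$.
\item Additionally impose $\varphi(A^{-1}u)=1$. This is possible because $u,Au,A^{-1}u$ are independent, which is where $n\ge3$ enters.
\item With $\tau=-4$, the vector $A^{-1}u-\frac12u$ lies in the kernel of $Y:=A+PA+AP-4P$, so $Y$ is singular.
\item One has $Y^ju-2A^ju\in\Span\{u,\ldots,A^{j-1}u\}$ for $1\le j<n$. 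Since $\operatorname{char}\FF\ne2$, the vector $u$ remains cyclic for $Y$.
\end{enumerate}
The singular case then gives six factors for $Y$, and $E=I-P$ supplies the seventh.
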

	
	\begin{proof}
		Let $A\in M_n(\FF)$ be cyclic. Choose a cyclic basis
		$v_1,\ldots,v_n$ of $\FF^n$ for $A$, with
		$Av_i=v_{i+1}$ for $i<n$, and write
		\[
		Av_n=\sum_{j=1}^n\alpha_jv_j,
		\qquad \alpha_1,\ldots,\alpha_n\in\FF.
		\]
		\emph{Case 1: $n=2k$.} Set
		\[
		U:=\Span\{v_1,v_3,\ldots,v_{2k-1}\},
		\qquad W:=\Span\{v_2,v_4,\ldots,v_{2k}\}.
		\]
		Relative to $U\oplus W$, the matrix of $A$ has the form
		\begin{equation}\label{eq:block-form}
			\begin{bmatrix}0&B\\C&D\end{bmatrix},
		\end{equation}
		where $C:U\to W$ is invertible. Since $D=C(C^{-1}D)$, Lemma~\ref{lem:block-identities}(b) gives six factors.
		
		\emph{Case 2: $n=2k+1$ and $A$ is singular.} In the cyclic basis above, $\det A=(-1)^{n-1}\alpha_1$, so singularity gives $\alpha_1=0$. Take
		\[
		U:=\Span\{v_2,v_4,\ldots,v_{2k}\},
		\qquad W:=\Span\{v_1,v_3,\ldots,v_{2k+1}\}.
		\]
		Again, $A$ has the block form \eqref{eq:block-form}, and
		\[
		\im C=\Span\{v_3,v_5,\ldots,v_{2k+1}\}.
		\]
		The map $D$ vanishes on $v_1,v_3,\ldots,v_{2k-1}$, while $Dv_{2k+1}\in\im C$ because $\alpha_1=0$. Hence $D=CT$ for some $T:W\to U$, and Lemma~\ref{lem:block-identities}(b) applies.
		
		\emph{Case 3: $n\ge3$ is odd and $A$ is invertible.}
		Fix a cyclic vector $u\in\FF^n$ for $A$. The vectors
		$u,Au,A^{-1}u$ are linearly independent, since a dependence,
		multiplied by $A$, would contradict the independence of
		$u,Au,A^2u$. Choose a linear functional
		$\varphi:\FF^n\to\FF$ satisfying
		\[
		\varphi(u)=1,
		\qquad \varphi(Au)=0,
		\qquad \varphi(A^{-1}u)=1.
		\]
		Define $P:\FF^n\to\FF^n$ by $P(x):=\varphi(x)u$, and set
		\[
		E:=I_n-P,\qquad Y:=A+PA+AP-4P.
		\]
		Since $P^2=P$ and $PAP=0$, we have $Y\circ E=A$. Moreover,
		\[
		Y\left(A^{-1}u-\frac12u\right)=0,
		\qquad \varphi\left(A^{-1}u-\frac12u\right)=\frac12,
		\]
		so $Y$ is singular. To see that $Y$ is cyclic, note that $Yu=2Au-4u$ and
		\[
		YA^ju-A^{j+1}u\in\Span\{u,Au\},\qquad j\ge1.
		\]
		Together these relations imply
		\[
		YA^iu\in\Span\{u,Au,\ldots,A^{i+1}u\},  \qquad 0\le i<n-1.
		\]
		An induction then gives
		\[
		Y^ju-2A^ju\in\Span\{u,Au,\ldots,A^{j-1}u\}, \qquad 1\le j<n.
		\]
		Thus the matrix whose columns are the coordinates of $u,Yu,\ldots,Y^{n-1}u$ with respect to the ordered basis $(u,Au,\ldots,A^{n-1}u)$ of $\FF^n$ is triangular with diagonal entries $1,2,\ldots,2$. Since $2\ne0$, it is invertible, so $u,Yu,\ldots,Y^{n-1}u$ form a basis of $\FF^n$. Case~2 gives a word of length at most six for $Y$. Appending $E$ gives a word of length at most seven for $A$.
	\end{proof}
	
	The lower bound follows from an elementary rank inequality.
	
	\begin{lemma}\label{lem:rank}
		Let $X,E\in M_n(\FF)$ with $E^2=E$, and let $\lambda\in \FF \setminus \{0\}$. Then
		\begin{equation}\label{eq:rank-step}
			\rank(X-\lambda I_n)
			\le 2\rank(X\circ E-\lambda I_n).
		\end{equation}
		Consequently, if $A \in M_n(\FF)$ has an idempotent word of length $m$ and $\lambda\notin\{0,1\}$, then
		\begin{equation}\label{eq:rank-product}
			n\le 2^{m-1}\rank(A-\lambda I_n).
		\end{equation}
	\end{lemma}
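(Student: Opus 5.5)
Write $Z:=X-\lambda I_n$ and $Y:=X\circ E-\lambda I_n$. The identity behind \eqref{eq:rank-step} is a Peirce-type relation between $X$ and $X\circ E$. With respect to the decomposition $\FF^n=\im E\oplus\ker E$, write
\[
X=\begin{bmatrix}X_{11}&X_{12}\\X_{21}&X_{22}\end{bmatrix},
\qquad
E=\begin{bmatrix}I&0\\0&0\end{bmatrix},
\qquad
X\circ E=\begin{bmatrix}X_{11}&\tfrac12X_{12}\\\tfrac12X_{21}&0\end{bmatrix}.
\]
Then
\[
Y=\begin{bmatrix}X_{11}-\lambda I&\tfrac12X_{12}\\\tfrac12X_{21}&-\lambda I\end{bmatrix}.
\]

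I would compare $Z$ with $Y$ through a decomposition in which every piece is visibly controlled by $\rank Y$. The $(2,2)$-block of $Y$ is $-\lambda I$, which is invertible because $\lambda\ne0$. Hence a Schur complement argument gives
\[
\rank Y=\dim\ker E+\rank\left(X_{11}-\lambda I-\tfrac1{4\lambda}X_{12}X_{21}\right).
\]
This shows $\rank Y\ge \dim\ker E$, and it also bounds the rank of the top-left Schur complement.

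The next step is to bound $\rank Z$. I would argue as follows.
\begin{itemize}
\item The rows $[X_{21}\;\;X_{22}-\lambda I]$ have rank at most $\dim\ker E\le\rank Y$.
\item The top block row $[X_{11}-\lambda I\;\;X_{12}]$ equals the top block row of $Y$ plus $[0\;\;\tfrac12X_{12}]$.
\end{itemize}
This is the awkward part. A cleaner route is to use $2Y=XE+EX-2\lambda I$ and observe that
\[
Z=2Y-\bigl(XE+EX-X-\lambda I\bigr).
\]
With that, I would instead aim directly for the equivalent formulation $\rank Z\le 2\rank Y$ via kernels: show $\dim\ker Y\le$ something like $\tfrac12(n+\dim\ker Z)$.

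The kernel bound goes as follows. Take $v=(a,b)\in\ker Y$. The second block row gives $b=\tfrac1{2\lambda}X_{21}a$, so $v$ is determined by $a$ and $\dim\ker Y\le\dim\im E$. Combined with $\rank Y\ge\dim\ker E$, this yields $\rank Y\ge\max(\dim\ker E,\ n-\dim\im E)$. Since $\dim\ker E=n-\dim\im E$, these are the same quantity, so this alone gives only $\rank Y\ge\dim\ker E$.

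To finish, I would bound $\rank Z\le \rank Y+\dim\ker E$. Consider the map $v\mapsto Zv-Yv=(X-X\circ E)v$. On $\im E$ this map equals $\tfrac12\begin{bmatrix}0\\X_{21}\end{bmatrix}$, which has image inside $\ker E$. On $\ker E$ it is arbitrary, but that subspace has dimension $\dim\ker E$. Hence $\rank(Z-Y)\le\dim\ker E+\dim\ker E$, which is not sharp enough.

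The better refinement is to write $Z=Y+(Z-Y)$, where $Z-Y=X-X\circ E=\tfrac12\bigl((I-E)X+X(I-E)\bigr)$. Both summands have rank at most $\dim\ker E$. Moreover, $(I-E)X$ is contained in the rows indexed by $\ker E$, and so are the corresponding rows of $Z$. So I would split $Z$ into its $\ker E$-rows and its $\im E$-rows:
\begin{itemize}
\item the $\ker E$-rows of $Z$ have rank at most $\dim\ker E$;
\item the $\im E$-rows of $Z$ equal the $\im E$-rows of $Y+\tfrac12X(I-E)$, which have rank at most $\rank Y+\dim\ker E$.
\end{itemize}
That gives $\rank Z\le\rank Y+2\dim\ker E\le 3\rank Y$, which is still too weak. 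This refinement is the main obstacle.

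What I would try first is to use the $\ker E$-rows of $Y$ itself, namely $[\tfrac12X_{21}\;\;-\lambda I]$. The $\ker E$-rows of $Z$, namely $[X_{21}\;\;X_{22}-\lambda I]$, together with these rows of $Y$, have rank at most $\dim\ker E+$ (the contribution of $X_{22}$). So I would instead apply column operations that use the invertible block $-\lambda I$ to eliminate $X_{12}$. Concretely, after right-multiplying $Z$ by the invertible matrix $\begin{bmatrix}I&0\\-\tfrac1{2\lambda}X_{21}&I\end{bmatrix}$ (the block pivot used in the Schur complement for $Y$), the top-left block becomes the Schur complement $S$ of $Y$ plus a correction $-\tfrac1{4\lambda}X_{12}X_{21}$. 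Then
\[
\rank Z\le \rank(\text{top-left block})+\dim\ker E\cdot 1+\text{(rank of the }X_{12}\text{ column)},
\]
and the goal is to fit this within $\rank S+2\dim\ker E\le 2\rank Y$. That last inequality holds because $\rank Y=\rank S+\dim\ker E$.

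The consequence \eqref{eq:rank-product} then follows by induction. Write $A=(\cdots(E_1\circ E_2)\cdots)\circ E_m$. For an idempotent $E_1$ and $\lambda\notin\{0,1\}$, the matrix $E_1-\lambda I_n$ is invertible, so its rank is $n$. Applying \eqref{eq:rank-step} $m-1$ times gives
\[
n\le 2^{m-1}\rank(A-\lambda I_n).
\]
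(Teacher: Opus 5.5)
\textbf{Verdict: there is a genuine gap.} Your derivation of \eqref{eq:rank-product} from \eqref{eq:rank-step} is fine and matches the paper. However, \eqref{eq:rank-step} itself is not proved. Every estimate you actually complete gives only $\rank Z\le\rank Y+2\dim\ker E\le 3\rank Y$. The final column-operation step does not close either. After your right multiplication, the top-left block is $S$ plus a multiple of $X_{12}X_{21}$, and this correction can have rank up to $\dim\ker E$. Counting the top-left block, the $X_{12}$ block column and the $\ker E$-rows separately therefore gives $\rank S+3\dim\ker E$, not the $\rank S+2\dim\ker E$ you need. Your sentence ``the goal is to fit this within'' is exactly the step that remains unproved. (Incidentally, the Schur complement is $X_{11}-\lambda I+\frac1{4\lambda}X_{12}X_{21}$, with a plus sign.)

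\textbf{The missing observation.} It was already within reach in your first bullet list. The top block row of $Z$ is not merely ``the top block row of $Y$ plus $[0\;\;\tfrac12X_{12}]$''; it is an invertible column rescaling of that row:
\[
[X_{11}-\lambda I\;\;X_{12}]=[X_{11}-\lambda I\;\;\tfrac12X_{12}]\begin{bmatrix}I&0\\0&2I\end{bmatrix}.
\]
Hence its rank is at most $\rank Y$. The $\ker E$-rows of $Z$ have rank at most $\dim\ker E$. Also $\dim\ker E\le\rank Y$, because $Y$ contains the invertible block $-\lambda I$. Therefore
\[
\rank Z\le\rank Y+\dim\ker E\le 2\rank Y.
\]

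\textbf{Relation to the paper.} This is precisely the block form of the paper's identity $X-\lambda I_n=(I_n+Q)Y(I_n+Q)+Q(X+3\lambda I_n)Q$ with $Q=I_n-E$. Here $I_n+Q=\diag(I,2I)$, and the second summand lives in the $(2,2)$ block, so its rank is at most $\rank Q=\dim\ker E$.

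\textbf{Fix within your column-operation route.} Any correction $cX_{12}X_{21}$ in the top-left block lies in the column space of the $X_{12}$ block column. A further column operation therefore removes it, giving
\[
\rank[S+cX_{12}X_{21}\;\;X_{12}]=\rank[S\;\;X_{12}]\le\rank S+\dim\ker E.
\]
Adding $\dim\ker E$ for the bottom rows then gives $\rank S+2\dim\ker E=\rank Y+\dim\ker E\le 2\rank Y$, as required.
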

	
	\begin{proof}
		Set $Q:=I_n-E$ and $Y:=X\circ E-\lambda I_n$. Since $QYQ=-\lambda Q$ and $\lambda\ne0$, we have $\rank Q=\rank(QYQ)\le\rank Y$. Also,
		\[
		X-\lambda I_n=(I_n+Q)Y(I_n+Q)+Q(X+3\lambda I_n)Q.
		\]
		Consequently,
		\[
		\rank(X-\lambda I_n) \le\rank Y+\rank Q\le 2\rank Y,
		\]
		which proves \eqref{eq:rank-step}.
		
		For \eqref{eq:rank-product}, let $E_1,\ldots,E_m$ be the factors of an idempotent word for $A$. Since $E_1-\lambda I_n$ is invertible,
		successive applications of \eqref{eq:rank-step} give
		\[
		n=\rank(E_1-\lambda I_n)
		\le 2^{m-1}\rank(A-\lambda I_n).
		\]
	\end{proof}
	
	\begin{proof}[Proof of Theorem~\ref{thm:matrices}]
		Let $A\in M_n(\FF)$ be nonscalar. By similarity invariance, we may assume that $A$ is in rational canonical form. Then at least one cyclic summand has order at least two, as otherwise all invariant factors would be linear, and their divisibility would force them to be identical, making $A$ scalar.
		
		Collect the cyclic summands of order at least two into a block $X\in M_r(\FF)$, where $r\ge2$. By Propositions~\ref{prop:exact-two} and~\ref{prop:cyclic}, together with Lemma~\ref{lem:products}, $X$ has a word of length at most $7$. The remaining summands, if any, have order one and hence form a diagonal block.
		
		Proposition~\ref{prop:exact-two}, together with the identity case, gives $\ell(\diag(\delta,1))\le 5$ for every $\delta\in\FF$. Hence
		\[
		\ell(A)\stackrel{\text{Lemma }\ref{lem:padding}}{\le}7+5\ceil{\log_2\frac nr}
		\le 7+5\left(\ceil{\log_2n}-1\right)
		=2+5\ceil{\log_2n}.
		\]
		Together with Lemma~\ref{lem:scalars}, this proves \eqref{eq:matrix-classification} and the upper bound in \eqref{eq:matrix-width}.
		
		If $\FF$ is quadratically closed, Corollary~\ref{cor:two-by-two} gives $w(M_2(\FF)^+)=3$ and permits $c=3$ in Lemma~\ref{lem:padding}. Thus
		\[
		\ell(A)\stackrel{\text{Lemma }\ref{lem:padding}}{\le}
		7+3\ceil{\log_2\frac nr}\le4+3\ceil{\log_2n}.
		\]
		Finally, choose $\lambda\in\FF\setminus\{0,1\}$, which is possible since $\operatorname{char}\FF\ne2$, and put $B:=\lambda I_n+E_{12}$. Since $\rank(B-\lambda I_n)=1$, Lemma~\ref{lem:rank} forces every idempotent word for $B$ to have length at least $1+\ceil{\log_2n}$. This proves the lower bound in \eqref{eq:matrix-width}.
	\end{proof}
	
	\begin{remark}\label{rem:pointwise-lower}
		The proof gives, more generally, for every Jordan subalgebra
		$\mathcal K\subseteq M_n(\FF)^+$ with unit $I_n$,
		\[
		\ell_{\mathcal K}(A)
		\ge1+\ceil{\log_2\frac{n}{\rank(A-\lambda I_n)}},
		\qquad
		A\in\PP(\mathcal K),\quad \lambda\in\FF\setminus\{0,1\}.
		\]
		Here $\rank(A-\lambda I_n)\ne0$ by Lemma~\ref{lem:scalars}.
	\end{remark}
	
	\section{Symmetric matrices}\label{sec:symmetric}
	
	Throughout this section, $\FF$ is quadratically closed of characteristic different from $2$. We work in the self-adjoint operator model $H(V)$ from \eqref{eq:selfadjoint-model}. For $v\in V$, let $v^*:V\to\FF$ be the linear functional defined by $v^*(u):=b(v,u)$. Thus $vv^*\in\End_\FF(V)$ denotes the operator $u\mapsto b(v,u)v$. If $q(v)\ne0$, the operator
	\begin{equation}\label{eq:rank-one-projection}
		P_v:V\to V,
		\qquad
		P_v(x):=\frac{b(v,x)}{q(v)}\,v,
	\end{equation}
	is the self-adjoint idempotent with image $\FF v$ and kernel
	$(\FF v)^\perp$.
	
	\begin{lemma}\label{lem:anisotropic-zero}
		Let $n:=\dim V\ge3$ and let $A\in H(V)$ be nonscalar. Then there exists $m\in V$ with $q(m)\ne0$ and $b(Am,m)=0$.
	\end{lemma}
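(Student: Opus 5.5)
We look for $m$ where both quadratic forms $q(m)=b(m,m)$ and $Q_A(m):=b(Am,m)$ behave as required: $Q_A(m)=0$ and $q(m)\ne0$. The plan is to restrict both forms to a suitable two-dimensional subspace and use quadratic closedness of $\FF$ to solve the resulting binary quadratic equations.

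\emph{Main case.} Suppose there is an anisotropic $u$ with $b(Au,u)\ne0$. Since $A$ is nonscalar, we want a second vector $w$ such that on $\Span\{u,w\}$ the forms $q$ and $Q_A$ are not proportional. Then $Q_A(xu+yw)=0$ is a binary quadratic equation. Over a quadratically closed field it has (at most) two projective solutions, which are lines, and these lines are not both $q$-isotropic unless $Q_A$ and $q$ share both zero lines. If they share both zero lines, then $Q_A$ is a scalar multiple of $q$ on the plane, because both forms are determined up to scalar by their zero set when it consists of two distinct lines.

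\emph{Choosing the plane.} Since $A$ is nonscalar and self-adjoint, the form $Q_A-\lambda q$ is nonzero for every $\lambda$. Hence $Q_A$ and $q$ are not proportional on all of $V$. By polarization they are then not proportional on some plane $P$. We also want $q|_P$ to be nondegenerate with two distinct isotropic lines, or with no issue. If $q|_P$ and $Q_A|_P$ are not proportional, consider the pencil $Q_A-\lambda q$ on $P$. Because $\FF$ is quadratically closed, we can solve $\det(Q_A-\lambda q)|_P=0$ for $\lambda$. With $\lambda$ chosen this way, $Q_A-\lambda q$ is a nonzero degenerate form on $P$, so it vanishes on a line $\FF m$. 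For that line, $Q_A(m)=\lambda q(m)$.

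This is not yet what we want, so instead we directly parametrize. Take $u\in P$ anisotropic, which is possible once $q|_P\ne0$. Write $m=u+tw$ and solve $Q_A(u+tw)=0$ as a quadratic in $t$. Generically this has two roots, and $q(u+tw)$ is a quadratic in $t$. We need one root of the first quadratic that is not a root of the second. Failure would mean the two quadratics (in homogeneous form) have the same roots, which forces proportionality, unless $Q_A|_P$ is degenerate with a single double root that is $q$-isotropic.

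\emph{Main obstacle.} The hard part is the degenerate configurations: $Q_A|_P$ identically zero, or its zero line coinciding with an isotropic line of $q$. Here we use $n\ge3$ and the freedom to move $P$. The set of planes $P$ on which these bad coincidences happen is cut out by polynomial conditions, and since $\FF$ is infinite and $A\ne\lambda I$, some plane avoids them. Concretely, I would first treat the case where $A$ has an eigenvector $v$ with $q(v)\ne0$, and then work inside $v^\perp$, which has dimension at least $2$ and is nondegenerate. If $A|_{v^\perp}$ is scalar $\mu$ with eigenvalue $\nu\ne\mu$ on $v$, take $m=\alpha v+z$ with $z\in v^\perp$ anisotropic, and solve $\nu\alpha^2q(v)+\mu q(z)=0$ for $\alpha$. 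Then
\[
q(m)=q(z)\,(1-\mu/\nu)\ne0
\]
when $\nu\ne0$; if $\nu=0$, simply take $m=v$. The remaining case, where no anisotropic eigenvector exists (for instance when $A$ has nilpotent parts), I would handle separately using hyperbolic bases.
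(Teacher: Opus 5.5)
There is a genuine gap: the step that carries the whole lemma is asserted rather than proved. Your observation is correct that a plane $P$ with $q|_P\ne0$, on which $Q_A|_P$ is nondegenerate and not proportional to $q|_P$, contains the desired $m$. You then dismiss the degenerate configurations by saying that the bad planes are cut out by polynomial conditions and that, since $\FF$ is infinite and $A\ne\lambda I$, some plane avoids them. Polynomial conditions can hold identically. Showing that they do not, i.e.\ exhibiting a single good plane, is exactly what has to be proved.

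Nonscalarity alone cannot do this. If $\dim V=2$ and $Ax=b(u,x)u$ with $u\ne0$ isotropic, then $A$ is nonscalar and $Q_A(x)=b(u,x)^2$, whose only zero line $u^\perp=\FF u$ is isotropic. So the conclusion fails in dimension $2$. The hypothesis $n\ge3$ must therefore enter at a definite point, and your sketch never uses it.

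The concrete fallback does not fill the gap either. Your computation for an anisotropic eigenvector $v$ with $A|_{v^\perp}$ scalar is fine. When $A|_{v^\perp}$ is nonscalar you give no argument, and recursing into $v^\perp$ breaks down for $n=3$: $A=N\oplus[\nu]$, with $N$ as above on $v^\perp$, puts you in the false two-dimensional situation. The case without anisotropic eigenvectors is left entirely open. Since $\FF$ is only quadratically closed here, $A$ need not have any eigenvector in $V$ at all.

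The missing idea is a global argument about the zero locus of $Q_A$, which is what the paper supplies. Assume, for contradiction, that $Q_A(v)=0$ implies $q(v)=0$, and diagonalize $Q_A$ using quadratic closedness.
\begin{itemize}
\item If $\rank Q_A\le2$, then $Q_A$ is $\xi_1^2$ or $\xi_1^2+\xi_2^2=(\xi_1+\iota\xi_2)(\xi_1-\iota\xi_2)$. In either case it vanishes on a hyperplane $U$, which would then be totally isotropic for $b$. This is impossible because $2\dim U=2n-2>n$ when $n\ge3$. This is exactly where $n\ge3$ is needed.
\item If $\rank Q_A=r\ge3$, write $Q_A=\xi_1^2+\cdots+\xi_r^2$ in a basis $e_1,\ldots,e_n$. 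For distinct $i,j\le r$, the zeros $e_i\pm\iota e_j$ force $b(e_i,e_j)=0$ and $q(e_i)=q(e_j)$. Hence $q=\kappa Q_A$ on $\Span\{e_1,\ldots,e_r\}$. The radical of $Q_A$ is then totally isotropic and $b$-orthogonal to this span, so it is zero by nondegeneracy. Thus $q=\kappa Q_A$ with $\kappa\ne0$, whence $A=\kappa^{-1}I$, a contradiction.
\end{itemize}
You should replace the genericity claim by this argument, or use it to produce an explicit good plane.
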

	
	\begin{proof}
		Define the quadratic form
		\[
		Q(v):=b(Av,v),\qquad v\in V.
		\]
		Since $A$ is self-adjoint, its polar form satisfies $b_Q(u,v)=b(Au,v)$. By the nondegeneracy of $b$, $Q=0$ would imply $A=0$, while $Q=\kappa q$ for some $\kappa\in\FF$ would imply
		$A=\kappa I$. Since $A$ is nonscalar, $Q$ is therefore nonzero and is not a scalar multiple of $q$.
		
		Assume, for contradiction, that $q(v)=0$ whenever $Q(v)=0$.
		
		Suppose first that $\rank Q\le 2$. Since $Q\ne0$, its rank is one or two. As $\FF$ is quadratically closed, after diagonalization and rescaling the basis vectors, $Q$ has the form $\xi_1^2$ or $\xi_1^2+\xi_2^2$, where $\xi_1,\ldots,\xi_n$ are the coordinates in the resulting basis. In the latter case $-1$ is a square in $\FF$, so in either case the zero set of $Q$ contains a hyperplane $U \subseteq V$. By our assumption, $q$ vanishes on $U$, so polarization shows that $b(u,u')=0$ for all $u,u'\in U$, and hence $U\subseteq U^\perp$. This is impossible, since $b$ is nondegenerate, $\dim U=n-1\ge2$, and $\dim U^\perp=1$.
		
		Now let $r:=\rank Q\ge3$. Since $\FF$ is quadratically closed, there is a basis $\{e_1,\ldots,e_n\}$ of $V$ such that $Q\bigl(\sum_{i=1}^n\xi_i e_i\bigr)=\xi_1^2+\cdots+\xi_r^2$ for all $\xi_1,\ldots,\xi_n\in\FF$. Set 
		\[
		U:=\Span\{e_1,\ldots,e_r\}, \qquad R:=\rad Q.
		\]
		Then
		$V=U\oplus R$. Choose $\iota\in\FF$ with $\iota^2=-1$. For distinct $i,j\le r$, the vectors $e_i\pm\iota e_j$ are zeros of $Q$, and hence also of $q$. Therefore
		\[
		0=q(e_i\pm\iota e_j)
		=q(e_i)-q(e_j)\pm2\iota b(e_i,e_j).
		\]
		It follows that $b(e_i,e_j)=0$ and $q(e_i)=q(e_j)$. Thus, with $\kappa:=q(e_1)$, we have $q|_U=\kappa Q|_U$.
		
		For $z\in R$, we have $Q(z)=b_Q(z,z)=0$, so our assumption gives $q(z)=0$. Hence $q$ vanishes on $R$, and polarization shows that $b(z,z')=0$ for all $z,z'\in R$. If $u\in U$ satisfies $Q(u)=0$ and $z\in R$, then $Q(u+z)=0$ because $R=\rad Q$. Hence $q(u)=q(z)=q(u+z)=0$, and therefore $b(u,z)=0$. The vectors $e_i\pm\iota e_j$, with $i\ne j$, span $U$, since for every $i$ we may choose $j\ne i$ and their sum is $2e_i$. Thus every vector in $U$ is orthogonal to every vector in $R$. Since $V=U\oplus R$, every vector in $R$ is orthogonal to $V$. The nondegeneracy of $b$ now gives $R=0$.
		
		Consequently $V=U$ and $q=\kappa Q$. Since $b$ is nondegenerate,
		$q\ne0$, so $\kappa\ne0$. Hence $Q=\kappa^{-1}q$, contradicting the
		choice of $Q$. Therefore there exists $m\in V$ such that $Q(m)=0$ and
		$q(m)\ne0$, that is, $b(Am,m)=0$ and $q(m)\ne0$.
	\end{proof}
	
	\begin{proposition}\label{prop:selfadjoint-reduction}
		Let $A\in H(V)$ be nonscalar and $\dim V\ge3$. There exist
		$B,E\in H(V)$ with $E^2=E$, a scalar $\rho\in\FF\setminus\{0\}$, a
		vector $k\in V$ with $q(k)\ne0$, and $B_0\in H(k^\perp)$ such that
		\[
		A=B\circ E,
		\qquad
		B=B_0\oplus[\rho],
		\]
		where the second identity is taken relative to the decomposition
		$V=k^\perp\oplus_b\FF k$.
	\end{proposition}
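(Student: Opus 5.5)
The plan is to peel off one rank-one idempotent, as in Case~3 of Proposition~\ref{prop:cyclic}, but inside $H(V)$. By Lemma~\ref{lem:anisotropic-zero}, choose $m\in V$ with $q(m)\ne0$ and $b(Am,m)=0$. Put $P:=P_m$ as in \eqref{eq:rank-one-projection}, $E:=I-P$, and
\[
Y_c:=A+PA+AP+cP,\qquad c\in\FF .
\]
Since $A$ and $P$ are self-adjoint, each $Y_c$ lies in $H(V)$, and $E$ is a self-adjoint idempotent.

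The first step is to check that $Y_c\circ E=A$ for every $c$. The only input needed is $PAP=0$, which holds because
\[
PAPx=\frac{b(m,x)}{q(m)}\cdot\frac{b(m,Am)}{q(m)}\,m=0 .
\]
With this, $Y_cP=2AP+cP$ and $PY_c=2PA+cP$. Hence
\[
Y_c\circ E=Y_c-\tfrac12\bigl(Y_cP+PY_c\bigr)=A .
\]
This is the same computation as for $Y\circ E=A$ in Proposition~\ref{prop:cyclic}, now with symmetric data.

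The second step is to produce $k$ with $q(k)\ne0$ and $Y_ck=\rho k$ for some $\rho\ne0$. This suffices: $Y_c$ is self-adjoint, so it preserves $k^\perp$, and since $q(k)\ne0$ we have $V=k^\perp\oplus_b\FF k$. Then $B:=Y_c=B_0\oplus[\rho]$ with $B_0:=Y_c|_{k^\perp}\in H(k^\perp)$.

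The easy case is when $A$ has an anisotropic eigenvector $k$ with nonzero eigenvalue $\rho$. Then I choose $m$ inside $k^\perp$ rather than in $V$. This requires an analogue of Lemma~\ref{lem:anisotropic-zero} on $k^\perp$ for $A|_{k^\perp}$, and it must be checked separately when $\dim k^\perp=2$ or when $A|_{k^\perp}$ is scalar. In the scalar case $A|_{k^\perp}=\mu I$, one instead takes $m$ anisotropic in $k^\perp$ with $\mu q(m)=0$, so $\mu=0$, or one swaps the roles of the eigenspaces. With $m,Am\in k^\perp$, we get
\[
Pk=0,\qquad PAk=\frac{b(Am,k)}{q(m)}\,m=\rho\,\frac{b(m,k)}{q(m)}\,m=0,
\]
hence $Y_ck=Ak=\rho k$.

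The main obstacle is that $\FF$ is only quadratically closed, so $A$ need not have any eigenvalue in $\FF$, let alone an anisotropic eigenvector with nonzero eigenvalue. In that case I would exploit the free parameter $c$ and the freedom in choosing $m$. On the two-dimensional space $\Span\{m,Am\}$, the operator $Y_c$ acts by
\[
Y_cm=2Am+cm,\qquad Y_c(Am)=A^2m+\frac{q(Am)}{q(m)}\,m .
\]
My first attempt is to choose $m$ so that $A^2m\in\Span\{m,Am\}$, that is, $m$ lies in an $A$-invariant subspace of dimension at most two. The restriction of $Y_c$ to that plane is then a $2\times2$ self-adjoint operator, whose eigenvalues are roots of a quadratic and therefore lie in $\FF$. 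Varying $c$ should then make one eigenvalue nonzero with an anisotropic eigenvector, avoiding the isotropic and zero cases.

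Failing that, I would enlarge $P$ to the self-adjoint idempotent onto a nondegenerate $A$-isotropic plane $W$, that is, a plane with $b(Aw,w')=0$ for all $w,w'\in W$. One must then verify that $Y:=A+PA+AP+Z$ still satisfies $Y\circ(I-P)=A$ for every $Z\in H(W)$. The hope is that the larger family of perturbations $Z$ is enough to force an anisotropic eigenvector with nonzero eigenvalue. Securing this eigenvector is the step I expect to require the real work.
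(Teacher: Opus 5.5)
Your first step is correct and matches the paper's: with $m$ from Lemma~\ref{lem:anisotropic-zero}, $P=P_m$ and $E=I-P$, the identity $PAP=0$ gives $Y_c\circ E=A$ for every $c\in\FF$. It then suffices to find $c$ and an anisotropic eigenvector of $Y_c$ with nonzero eigenvalue. That second step is the entire content of the proposition, and your proposal does not establish it.

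Your ``easy case'' is trivial: if $A$ has an anisotropic eigenvector with nonzero eigenvalue, take $B=A$ and $E=I$. Your treatment of the remaining case fails in general. Placing $m$ in an $A$-invariant subspace of dimension at most two presupposes such a subspace exists, and over a quadratically closed field that is not algebraically closed it need not. Take the field $\mathbb E$ of Remark~\ref{rem:quadratic-scope} and $V=\mathbb E[t]/(t^3+t+1)$. Multiplication by $t$ is self-adjoint for the symmetric form $(x,y)\mapsto\lambda(xy)$, where $\lambda$ is any nonzero linear functional. This form is nondegenerate because $V$ is a field. The result is a nonscalar element of $H(V)$ with $\dim V=3$ and no invariant line or plane. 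The fallback with a larger projection is, as you say, only a hope.

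The missing idea is that no eigenvalue of $A$ is needed: the free coefficient $c$ lets you prescribe the eigenvalue of $Y_c$. Put $Z:=Y_0$, pick $\rho$ that is not an eigenvalue of $Z$, and set $k:=(Z-\rho I)^{-1}m$. Then
\[
Y_ck=Zk+c\,\frac{b(m,k)}{q(m)}\,m=\rho k+\Bigl(1+c\,\frac{b(m,k)}{q(m)}\Bigr)m,
\]
so $c:=-q(m)/b(m,k)$ gives $Y_ck=\rho k$, provided $b(m,k)\ne0$.

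It remains to choose $\rho\ne0$ so that $Z-\rho I$ is invertible, $b(m,k)\ne0$ and $q(k)\ne0$. With $\sigma:=\rho^{-1}$ one has $k=-\sigma(I-\sigma Z)^{-1}m$. By the adjugate formula, multiplying by powers of $\det(I-\sigma Z)$ turns $b\bigl(m,(I-\sigma Z)^{-1}m\bigr)$ and $q\bigl((I-\sigma Z)^{-1}m\bigr)$ into polynomials in $\sigma$. Both take the value $q(m)\ne0$ at $\sigma=0$. Hence all but finitely many $\sigma$ work, and $\FF$ is infinite. This is exactly the paper's argument: after normalizing $q(m)=1$, it takes $B=Z-P/b(m,k)$.
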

	
	\begin{proof}
		Choose $m$ as in Lemma~\ref{lem:anisotropic-zero} and, since $\FF$ is quadratically closed, rescale it so that $q(m)=1$. Put 
		\[
		P:=P_m, \qquad E:=I-P, \qquad  \text{and} \qquad  Z:=A+PA+AP.
		\]
		Then $P,E,Z\in H(V)$ and $E^2=E$. Since $b(Am,m)=0$, we have $PAP=0$; hence $ZE+EZ=2A$ and $PE=EP=0$. Therefore
		\begin{equation}\label{eq:selfadjoint-factor}
			(Z+\tau P)\circ E=A,
			\qquad \tau\in\FF.
		\end{equation}
		It remains to choose $\tau$ so that $Z+\tau P$ has an anisotropic eigenvector with nonzero eigenvalue. For $\sigma\in\FF$ such that $I-\sigma Z$ is invertible, put $h_\sigma:=(I-\sigma Z)^{-1}m$ and set $d(\sigma):=\det(I-\sigma Z)$. Since $d(0)=1$, the polynomial $d$ is nonzero. By the adjugate formula, the vector $d(\sigma)h_\sigma$ has polynomial coordinates in $\sigma$. Consequently, $d(\sigma)b(m,h_\sigma)$ and $d(\sigma)^2q(h_\sigma)$, initially defined when $d(\sigma)\ne0$, extend to polynomials in $\sigma$. Both extensions take the value $1$ at $\sigma=0$, so neither is the zero polynomial. Thus only finitely many values of $\sigma$ are excluded by the conditions that $I-\sigma Z$ be invertible and $b(m,h_\sigma)q(h_\sigma)\ne0$. Since $\FF$ is infinite, we may therefore choose $\sigma\in\FF\setminus\{0\}$ satisfying these conditions. Set
		\[
		\rho:=\sigma^{-1},
		\qquad
		k:=-\sigma h_\sigma.
		\]
		Then $(Z-\rho I)k=m$, while $b(m,k)\ne0$ and $q(k)\ne0$.
		
		Set $B:=Z-P/b(m,k)$. Then $B\in H(V)$. Since $q(m)=1$, we have $Pk=b(m,k)m$, and hence $Bk=Zk-m=\rho k$. Thus $k^\perp$ is $B$-invariant, since $B$ is self-adjoint. Moreover, $q(k)\ne0$ gives $V=k^\perp\oplus_b\FF k$. Therefore, with $B_0:=B|_{k^\perp}\in H(k^\perp)$, $B=B_0\oplus[\rho]$. Finally, taking $\tau=-1/b(m,k)$ in \eqref{eq:selfadjoint-factor} gives $A=B\circ E$.
	\end{proof}
	
	In dimension three, the two-dimensional block $B_0$ may be nonzero with $B_0^2=0$. The following construction handles that case directly.
	
	\begin{lemma}\label{lem:nilpotent-plus-scalar}
		Let $U\subseteq V$ be a two-dimensional subspace, and let $m\in V$ satisfy $q(m)=1$ and $V=U\oplus_b\FF m$. Let $N\in H(U)$ be nonzero with $N^2=0$. Then, for every $\mu\in\FF$,
		\[
		N\oplus[\mu]\in\PP(H(V)),
		\qquad
		\ell_{H(V)}(N\oplus[\mu])\le 3.
		\]
		If $\mu=1$, then $\ell_{H(V)}(N\oplus[1])\le 2$.
	\end{lemma}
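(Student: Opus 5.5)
Write $N=\lambda\, ww^*$ with $0\ne\lambda\in\FF$ and $w\in U$ isotropic; this is possible because $N\in H(U)$ satisfies $N^2=0$ and $U$ is a nondegenerate plane. Complete $w$ to a hyperbolic basis $\{w,z\}$ of $U$. Then $\{w,z,m\}$ is a basis of $V$ with $b(w,z)=1=q(m)$ and all other pairings zero. In this basis, $N\oplus[\mu]=\lambda ww^*+\mu\, mm^*$. The plan is to build the word from rank-one projections $P_a=aa^*$ with $q(a)=1$, and from their complements, because Jordan products of such operators are easy to compute explicitly. The basic identity is
\[
P_a\circ P_b=\frac{b(a,b)}2\,(ab^*+ba^*).
\]

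\emph{The case $\mu=1$.} Take $a:=m+\alpha w$ and $b:=m-\alpha w$. Both satisfy $q(a)=q(b)=1$ because $w\perp m$ and $q(w)=0$, and $b(a,b)=1$. The identity gives
\[
P_a\circ P_b=\tfrac12(ab^*+ba^*)=mm^*-\alpha^2ww^*.
\]
The mixed $w$–$m$ terms cancel because the coefficients of $w$ are opposite. Since $\FF$ is quadratically closed, we may choose $\alpha$ with $\alpha^2=-\lambda$. Then $P_a\circ P_b=N\oplus[1]$, which proves $\ell\le2$ in this case.

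\emph{General $\mu$.} I would write $N\oplus[\mu]=Y\circ E$ with $Y$ of the two-factor shape above and $E$ a suitable idempotent. It is natural to keep $\alpha\ne-\beta$ in general, so that $P_{m+\alpha w}\circ P_{m+\beta w}=mm^*+s(wm^*+mw^*)+p\,ww^*$ with $s=(\alpha+\beta)/2$ and $p=\alpha\beta$. Alternatively, one could use vectors involving $z$ as well. For the third factor I would try $E=I-P_c$ with $c=m+\kappa w$, or $c$ involving $z$. The mechanism is the Peirce rule $Y\circ E=\tfrac12(YE+EY)$: it multiplies the $\J_{1/2}(E)$-part of $Y$ by $\tfrac12$, kills the $\J_0(E)$-part, and keeps the $\J_1(E)$-part. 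The aim is to arrange that the cross terms $wm^*+mw^*$ cancel and that the coefficient of $mm^*$ becomes $\mu$ while that of $ww^*$ becomes $\lambda$.

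Concretely, I would expand $Y\circ(I-P_c)=Y-\tfrac12(YP_c+P_cY)$ in the basis $\{w,z,m\}$ and compare coefficients of $ww^*$, $wm^*+mw^*$, $mm^*$ and the $z$-terms. This gives a small polynomial system in $\alpha,\beta,\kappa$. I would solve it using quadratic closedness, possibly splitting off $\mu=0$ or $\mu=1$ if a denominator vanishes. If the Ansatz $c=m+\kappa w$ is too rigid, because the $z$-component is needed to produce a genuine scaling on $m$, I would enlarge it to $c=\gamma m+\kappa w+\nu z$ subject to $q(c)=\gamma^2+2\kappa\nu\ne0$.

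The main obstacle is finding a consistent choice of these parameters for every $\mu$, including the degenerate values where a leading coefficient vanishes. The fallback is to let the first two factors be general rank-one projections spanning $\Span\{w,m\}$ plus a $z$-perturbation, and to match coefficients as above.
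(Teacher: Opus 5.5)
\textbf{Verdict: there is a genuine gap.} Your $\mu=1$ step is correct and is the paper's own identity $P_{m+\iota u}\circ P_{m-\iota u}=P_m+uu^*$. The case $\mu=0$ also follows at once from $(N\oplus[1])\circ(I-P_m)=N\oplus[0]$.

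\textbf{The gap.} The missing case is $\mu\notin\{0,1\}$, which is the real content of the lemma. There you only announce a coefficient system and hope it is solvable, and the Ansatz you describe cannot work. Every $Y=P_{m+\alpha w}\circ P_{m+\beta w}=mm^*+s(wm^*+mw^*)+p\,ww^*$ satisfies $Yw=0$. Put $T:=N\oplus[\mu]=\lambda ww^*+\mu mm^*$. Then $Tw=0$ and $\im T\subseteq w^\perp=\Span\{w,m\}$. The nontrivial idempotents of $H(V)$ are $P_c$ and $I-P_c$ with $q(c)\ne0$; a trivial last factor can only give $\mu=1$.
\begin{itemize}
\item Suppose $Y\circ(I-P_c)=T$. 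The Peirce decomposition relative to $I-P_c$ forces $b(c,Tc)=0$ and $Y=T+P_cT+TP_c+\tau P_c$ for some $\tau\in\FF$.
If $b(w,c)=0$, then $c=\gamma m+\kappa w$ with $\gamma^2=q(c)\ne0$, and $0=b(c,Tc)=\mu\gamma^2$ gives $\mu=0$.
If $b(w,c)\ne0$, then $Yw=q(c)^{-1}b(c,w)(Tc+\tau c)\ne0$. Indeed $b(w,Tc+\tau c)=\tau b(w,c)$, and for $\tau=0$ the vector $Tc$ has $w$-coefficient $\lambda b(w,c)\ne0$.
\item Suppose instead $Y\circ P_c=T$ with $\mu\ne0$. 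Then $\im T=w^\perp$ must lie in $\Span\{c,Yc\}$, so $c\in w^\perp$. This gives $b(c,Tc)=b(c,Yc)=q(c)$, whereas $b(c,Tc)=\mu b(m,c)^2=\mu q(c)$, forcing $\mu=1$.
\end{itemize}
So no choice of $\alpha,\beta$ and third factor reaches $\mu\notin\{0,1\}$, whether or not $c$ has a $z$-component. The two-factor predecessor must fail to annihilate $w$. Your fallback of perturbing the first two factors by $z$ is where a solution lives, but it is left entirely uncomputed.

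\textbf{The missing idea.} The paper aims only at the orthogonal-similarity class of $N\oplus[\mu]$, not at the operator itself. Isometries of $V$ induce Jordan automorphisms of $H(V)$, so lengths are invariant under this conjugation.
\begin{enumerate}
\item Normalize $\lambda=1$: replace $w$ by $rw$ and $z$ by $z/r$, where $r^2=\lambda$.
\item Keep $Y:=N\oplus[1]=mm^*+ww^*$ as the two-factor predecessor.
\item Choose $\beta$ with $\beta^2=\mu(1-\mu)$, so $\beta\ne0$, and take the third factor $P_c$ with $c:=\frac{1-\mu^2}{2\beta}w+\beta z+\mu m$.
\item Then $q(c)=1$, $Yc=\beta w+\mu m$, $b(c,Yc)=\mu$ and $q(Yc)=\mu^2$. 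Hence $x:=Yc-\mu c$ is isotropic and orthogonal to $c$, and it is nonzero because its $z$-coefficient is $-\mu\beta$.
\item With $k:=c+x/(2\mu)$ one gets $q(k)=1$, $b(k,x)=0$ and $Y\circ P_c=\mu P_k-\frac1{4\mu}xx^*$. This operator acts as $\mu$ on $\FF k$ and as a nonzero square-zero operator on the plane $k^\perp$.
\item Rescale $x$ and complete it to a hyperbolic basis of $k^\perp$. This yields an isometry $S$ with $S(Y\circ P_c)S^{-1}=N\oplus[\mu]$, and conjugating the three factors by $S$ gives the word.
\end{enumerate}
This replaces a full coefficient match by two scalar conditions on $c$. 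The third factor is a rank-one projection that moves the isotropic line of the nilpotent part from $\FF w$ to $\FF x$. That is exactly what your family excludes, since all its members kill $w$.
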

	
	\begin{proof}
		Since $N\ne0$, $N^2=0$ and $\dim U=2$, the operator $N$ has rank one. Choose $0\ne u\in\im N$. Then $u=Nx$ for some $x\in U$, so
		\[
		q(u)=b(Nx,Nx)=b(x,N^2x)=0.
		\]
		Thus $u$ is isotropic. Since $\im N=\FF u$, there is a nonzero linear functional $\lambda:U\to\FF$ such that $Nx=\lambda(x)u$ for all $x\in U$. For $x,y\in U$, self-adjointness gives
		\[
		\lambda(x)b(u,y)=b(Nx,y)=b(x,Ny)=\lambda(y)b(u,x).
		\]
		Since $U$ is nondegenerate and $u\ne0$, choose $y\in U$ with $b(u,y)\ne0$ and set
		\[
		\alpha:=\frac{\lambda(y)}{b(u,y)}.
		\]
		The displayed identity gives $\lambda(x)=\alpha b(u,x)$ for all $x\in U$. Since $\lambda\ne0$, we have $\alpha\ne0$, and hence $Nx=\alpha b(u,x)u$ for all $x\in U$. Since $\FF$ is quadratically closed, choose a nonzero $r\in\FF$ with $r^2=\alpha$ and replace $u$ by $ru$; then $Nx=b(u,x)u$ for all $x\in U$. Extend $\{u\}$ to a hyperbolic basis $\{u,v\}$ of $U$.
		
		Choose $\iota\in\FF$ with $\iota^2=-1$. Since
		\[
		q(m+\iota u)=q(m-\iota u)=b(m+\iota u,m-\iota u)=1,
		\]
		we have
		\[
		Y:=P_m+uu^*
		\stackrel{\eqref{eq:rank-one-projection}}{=}
		P_{m+\iota u}\circ P_{m-\iota u}.
		\]
		The operator $uu^*$ restricts to $N$ on $U$ and vanishes on $\FF m$, so $Y=N\oplus[1]$. This proves the case $\mu=1$. Since $I-P_m$ is a self-adjoint idempotent and $Y\circ(I-P_m)=N\oplus[0]$, the case $\mu=0$ follows as well.
		
		Now suppose $\mu\notin\{0,1\}$. We shall choose $c\in V$ with $q(c)=1$ so that $Yc-\mu c$ is a nonzero isotropic vector orthogonal to $c$. Choose $\beta\in\FF\setminus\{0\}$ with $\beta^2=\mu(1-\mu)$, and put
		\[
		c:=\frac{1-\mu^2}{2\beta}u+\beta v+\mu m.
		\]
		Since $b(u,c)=\beta$ and $b(m,c)=\mu$, we have $Yc=\beta u+\mu m$. Using $q(u)=q(v)=0$, $b(u,v)=1$, the orthogonality of $U$ and $\FF m$, $q(m)=1$, and $\beta^2=\mu(1-\mu)$, we obtain
		\[
		q(c)=1,
		\qquad
		b(c,Yc)=\beta^2+\mu^2=\mu,
		\qquad
		q(Yc)=\mu^2.
		\]
		Hence $w:=Yc-\mu c$ satisfies $q(w)=0$ and $b(c,w)=0$. Moreover, $w\ne0$, since its coefficient of $v$ is $-\mu\beta$. Put $k:=c+w/(2\mu)$. Then $q(k)=1$ and $b(k,w)=0$. Since $q(c)=1$, $Y$ is self-adjoint, $c=k-w/(2\mu)$, and $Yc=\mu k+w/2$, \eqref{eq:rank-one-projection} gives
		\[
		Y\circ P_c
		=\frac12\left((Yc)c^*+c(Yc)^*\right)
		=\mu P_k-\frac{1}{4\mu}ww^*.
		\]
		Since $q(k)=1$, we have 
		\[
		V=k^\perp\oplus_b\FF k.
		\]
		The restriction of $b$ to $k^\perp$ is nondegenerate and $\dim k^\perp=2$. The operator $Y\circ P_c$ acts as multiplication by $\mu$ on $\FF k$ and its restriction to $k^\perp$ is the nonzero self-adjoint operator
		\[
		-\frac{1}{4\mu}ww^*,
		\]
		whose square is zero. Choose $\gamma\in\FF$ with $\gamma^2=-1/(4\mu)$ and put
		$a:=\gamma w$. Then $a$ is a nonzero isotropic vector in $k^\perp$ and $aa^*=-ww^*/(4\mu)$. Extend $\{a\}$ to a hyperbolic basis $\{a,d\}$ of $k^\perp$. Since the bases $(a,d,k)$ and $(u,v,m)$ have the same Gram matrix, the linear map $S:V\to V$ determined by
		$Sa=u$, $Sd=v$ and $Sk=m$ is an isometry. Therefore
		\[
		S(Y\circ P_c)S^{-1}=\mu P_m+uu^*=N\oplus[\mu].
		\]
		Conjugation by $S$ is a Jordan automorphism of $H(V)$, so conjugating
		the three idempotent factors $P_{m+\iota u}$, $P_{m-\iota u}$ and $P_c$ by $S$ gives an idempotent word of length three for
		$N\oplus[\mu]$.
	\end{proof}
	
	\begin{theorem}\label{thm:symmetric}
		Let $\FF$ be quadratically closed with $\operatorname{char}\FF\ne2$, and let $n\ge3$. Then
		\[
		\PP(H_n(\FF))=(H_n(\FF)\setminus\FF I_n)\cup\{0,I_n\},
		\]
		and
		\[
		1+\ceil{\log_2n}\le w(H_n(\FF))\le 4n-4.
		\]
		Furthermore, every nonscalar diagonalizable $A\in H_n(\FF)$ satisfies
		\begin{equation}\label{eq:symmetric-diagonalizable}
			\ell_{H_n(\FF)}(A)\le 3\ceil{\log_2n}.
		\end{equation}
	\end{theorem}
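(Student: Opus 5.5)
The plan is to induct on $n=\dim V$ in the self-adjoint model $H(V)$, peeling off one anisotropic line at a time with Proposition~\ref{prop:selfadjoint-reduction}. The base case is $\dim V=2$, handled by Corollary~\ref{cor:two-by-two}: there every nonscalar element with nonzero square has length at most four, and the only obstruction is a nonzero square-zero block.

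The claim I would prove is that every nonscalar $A\in H(V)$ with $\dim V=n\ge3$ satisfies $\ell(A)\le 4n-4$. Write $A=B\circ E$ with $B=B_0\oplus[\rho]$ relative to $V=k^\perp\oplus_b\FF k$, where $\rho\ne0$. It then suffices to bound $\ell(B)\le 4n-5$, and I would split into cases according to $B_0$.
\begin{enumerate}[label=\textup{(\roman*)}]
\item If $B_0$ is nonscalar and $\dim k^\perp\ge3$, induction gives $\ell_{H(k^\perp)}(B_0)\le 4(n-1)-4$.
\item If $B_0$ is nonscalar and $\dim k^\perp=2$, Corollary~\ref{cor:two-by-two} gives a bound of $4$ whenever $B_0^2\ne0$.
\item If $B_0$ is nonscalar, $\dim k^\perp=2$ and $B_0^2=0$ (so $B_0\ne0$), then Lemma~\ref{lem:nilpotent-plus-scalar} applied with $U=k^\perp$ and $m=k/\sqrt{q(k)}$ shows directly that $B$ has length at most $3$.
\end{enumerate}
In cases (i) and (ii), I would build a word for $B=B_0\oplus[\rho]$ by taking a word for $B_0$ and extending each idempotent factor by $[1]$ on $\FF k$. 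This produces $B_0\oplus[1]$, not $B_0\oplus[\rho]$. To fix the $\FF k$-coordinate, I would instead use Lemma~\ref{lem:products}-style padding, as in Lemma~\ref{lem:padding}: pair the coordinate $\rho$ with a coordinate of the identity via a $2\times2$ block $\diag(\rho,1)$, which has length at most $3$ by \eqref{eq:diagonal-two}, and then append the extended factors. This costs at most $3$ extra factors, so one step adds at most $1+3=4$ factors. This is consistent with the bound $4n-4$ when the base case $n=3$ reduces to a $2$-dimensional block of length at most $4$: $4+3+1=8=4\cdot3-4$.

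The main obstacle is the case where $B_0$ is scalar. If $B_0=\lambda I$ with $\lambda\ne\rho$, then $B$ is diagonalizable. If moreover $B_0\in\{0,I\}$, then $B$ is a multiple of a rank-one projection, and it is still handled by padding $\diag(\rho,\lambda)$-type blocks. If $B_0=\lambda I$ with $\lambda=\rho$, then $B$ is scalar, and I would avoid this by using the freedom in choosing $\sigma$ in Proposition~\ref{prop:selfadjoint-reduction}. Since $B=Z-P/b(m,k)$ and $Z$ is independent of $\sigma$, $B$ can equal $\rho I$ for at most one value of $\rho=\sigma^{-1}$, so excluding finitely many further $\sigma$ should keep $B$ nonscalar. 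I would also need to check that in the scalar-$B_0$ cases the length of the diagonal $B$ fits within $4n-5$; it does, via \eqref{eq:symmetric-diagonalizable}.

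For \eqref{eq:symmetric-diagonalizable}, I would diagonalize $A$ by orthogonal similarity, which is invariant for $\ell_{H_n}$. A nonscalar diagonal matrix has two distinct entries, giving a nonscalar $\diag(\alpha,\beta)$ block of length at most $3$. Repeated doubling then follows the proof of Lemma~\ref{lem:padding} with $c=3$ and $r=2$. More precisely, I would run the doubling directly with $2\times2$ diagonal pieces, each of length at most $3$ by Corollary~\ref{cor:two-by-two}, so that $3\ceil{\log_2 n}$ factors suffice.

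The lower bound $1+\ceil{\log_2 n}$ comes from Remark~\ref{rem:pointwise-lower} applied to $\mathcal K=H_n(\FF)$ with $A=\lambda I_n+E_{11}$ for some $\lambda\notin\{0,1\}$, which has $\rank(A-\lambda I_n)=1$. Finally, representability of all nonscalar elements follows from the upper bound, and the exclusion of scalars other than $0$ and $I_n$ follows from Lemma~\ref{lem:scalars}.
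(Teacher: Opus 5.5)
Your proposal is correct and follows essentially the paper's proof. It uses the same reduction $A=B\circ E$ from Proposition~\ref{prop:selfadjoint-reduction}, the same case split on $B_0$ (induction, Corollary~\ref{cor:two-by-two}, Lemma~\ref{lem:nilpotent-plus-scalar}, diagonalizable $B$), the same padding cost of $3$ per step, the same doubling argument for \eqref{eq:symmetric-diagonalizable}, and the same rank lower bound.

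The one real deviation is the scalar-$B$ case. You perturb $\sigma$, which is valid: $B=\rho I$ forces $Z-\rho I$ to be a multiple of the fixed rank-one $P$, and this can happen for at most one $\rho$. The paper instead simply observes that then $A=\rho E$ is itself diagonalizable. Separately, for \eqref{eq:symmetric-diagonalizable} you should justify orthogonal diagonalizability: the eigenspaces are mutually orthogonal and nondegenerate, hence admit orthonormal bases over a quadratically closed field.
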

	
	\begin{proof}
		We work in $H(V)$, where $\dim V=n$. The scalar obstruction is
		provided by Lemma~\ref{lem:scalars}. To prove
		\eqref{eq:symmetric-diagonalizable}, let $A\in H(V)$ be nonscalar and diagonalizable. Its distinct eigenspaces are mutually orthogonal. Each eigenspace is nondegenerate: a vector in one eigenspace that is orthogonal to it is orthogonal to every eigenspace, and hence to all of $V$. Since $\FF$ is quadratically closed, each eigenspace admits an orthonormal basis. Their union therefore gives an orthogonal diagonalization of $A$. Choose two unequal diagonal entries. By Corollary~\ref{cor:two-by-two}, the corresponding $2\times2$ block has a word of length at most three. Apply Lemma~\ref{lem:padding}, using \eqref{eq:diagonal-two}, to the remaining coordinates. This gives
		\[
		\ell(A)\stackrel{\eqref{eq:padding}}{\le}3+3\ceil{\log_2(n/2)}=3\ceil{\log_2n}.
		\]
		Now let $A\in H(V)$ be nonscalar. We prove the general statement by induction on $n$. Whenever Proposition~\ref{prop:selfadjoint-reduction} is applied, we may, since $\FF$ is quadratically closed, rescale $k$ so that $q(k)=1$; this does not change $k^\perp$ or $\FF k$. Since $q(k)\ne0$, the restriction of $b$ to $k^\perp$ is nondegenerate. As $\FF$ is quadratically closed, $k^\perp$ admits an orthonormal basis, which together with $k$ gives an orthonormal basis of $V$ adapted to $V=k^\perp\oplus_b\FF k$. Thus $H(k^\perp)$ identifies with $H_{n-1}(\FF)$.
		
		For $n=3$, Proposition~\ref{prop:selfadjoint-reduction} gives $A=B\circ E$ with $B=B_0\oplus[\rho]$, $\rho\ne0$. If $B_0$ is nonscalar and $B_0^2\ne0$, Corollary~\ref{cor:two-by-two} gives $\ell(B_0)\le4$. Lemma~\ref{lem:padding}, using \eqref{eq:diagonal-two}, then gives $\ell(B)\le7$, and appending the final factor $E$ gives $\ell(A)\le8$. If $B_0\ne0$ and $B_0^2=0$, Lemma~\ref{lem:nilpotent-plus-scalar}, applied with $U=k^\perp$, $m=k$, $N=B_0$, and $\mu=\rho$, gives at most $3+1=4$ factors. If $B_0$ is scalar and $B$ is nonscalar, then $B$ is diagonalizable, so \eqref{eq:symmetric-diagonalizable} gives $\ell(B)\le 6$ and hence $\ell(A)\le 7$ after appending $E$. If $B$ is scalar, then $B=\rho I$, so $A=\rho E$ is nonscalar and diagonalizable. Hence \eqref{eq:symmetric-diagonalizable} gives $\ell(A)\le 6$. Thus $w(H_3(\FF))\le 8$.
		
		Let $n\ge4$, and apply Proposition~\ref{prop:selfadjoint-reduction} again. If $B_0$ is nonscalar, the induction hypothesis and Lemma~\ref{lem:padding}, using \eqref{eq:diagonal-two}, give
		\[
		\ell(B)\stackrel{\eqref{eq:padding}}{\le} \left(4(n-1)-4\right)+3=4n-5,
		\]
		and hence $\ell(A)\le 4n-4$ after appending $E$. If $B_0$ is scalar and $B$ is nonscalar, then $B$ is diagonalizable, so
		\[
		\ell(A)\le\ell(B)+1\stackrel{\eqref{eq:symmetric-diagonalizable}}{\le}3\ceil{\log_2n}+1.
		\]
		If $B$ is scalar, then $B=\rho I$, so $A=\rho E$ is nonscalar and diagonalizable. Hence
		\[
		\ell(A)\stackrel{\eqref{eq:symmetric-diagonalizable}}{\le}3\ceil{\log_2n}.
		\]
		Thus in either case 
		\[
		\ell(A)\le 3\ceil{\log_2n}+1\le 4n-4.
		\]
		Here $\ceil{\log_2n}\le n-1$ and $3n-2\le 4n-4$. This completes the induction and proves the upper bound. Finally, choose $\lambda\in\FF\setminus\{0,1\}$ and put $C:=\lambda I_n+E_{11}$. The matrix $C$ is nonscalar and diagonalizable, so $C\in\PP(H_n(\FF))$. If $m=\ell_{H_n(\FF)}(C)$, then
		\[
		n\stackrel{\eqref{eq:rank-product}}{\le} 2^{m-1}\rank(C-\lambda I_n) =2^{m-1}.
		\]
		Hence $m\ge1+\ceil{\log_2n}$, proving the lower bound.
	\end{proof}
	
	Therefore, the number of factors is not bounded independently of $n$, although these bounds do not determine the growth order of $w(H_n(\FF))$.
	
	\section{The remaining simple algebras}\label{sec:remaining}
	
	So far we have treated spin factors, full matrix algebras, and symmetric matrices. It remains to consider the symplectic Hermitian and split Albert algebras. We begin with the symplectic Hermitian case from \eqref{eq:symplectic-model}. The argument works over any field of characteristic different from $2$. The following result is due to Waterhouse~\cite[Theorem~3]{Waterhouse2005}. We include a self-contained proof in the present notation.
	
	\begin{lemma}\label{lem:symplectic-normal-form}
		For every $X\in H_{2n}(\FF,\mathrm{sp})$, there exist a symplectic matrix $S\in M_{2n}(\FF)$ and a matrix $A\in M_n(\FF)$ such that
		\begin{equation}\label{eq:symplectic-normal-form}
			S^{-1}XS=\diag(A,A^t).
		\end{equation}
	\end{lemma}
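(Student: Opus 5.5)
The plan is to build an $X$-invariant Lagrangian splitting of $V:=\FF^{2n}$ with respect to the alternating form $\omega(u,v):=u^tR_nv$. The condition $X^tR_n=R_nX$ says exactly that $\omega(Xu,v)=\omega(u,Xv)$ for all $u,v$. Suppose we find totally isotropic $X$-invariant subspaces $L,L'$ with $V=L\oplus L'$ and $\omega$ pairing $L$ and $L'$ nondegenerately. Choose a basis $f_1,\dots,f_n$ of $L$ and the $\omega$-dual basis $g_1,\dots,g_n$ of $L'$, so that $\omega(f_i,g_j)=\delta_{ij}$. Up to a harmless sign or reordering, the matrix $S$ with columns $f_1,\dots,f_n,g_1,\dots,g_n$ is then symplectic. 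In this basis $X|_L$ has some matrix $A$, and $X|_{L'}$ has matrix $A^t$, because $\omega(Xf_i,g_j)=\omega(f_i,Xg_j)$. This gives \eqref{eq:symplectic-normal-form}.

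\emph{Key observation: every cyclic subspace $\FF[X]u$ is totally isotropic.} It suffices to show $\omega(u,X^ku)=0$ for all $k\ge0$, since $\omega(X^iu,X^ju)=\omega(u,X^{i+j}u)$.
\begin{itemize}
\item If $k=2j$, then $\omega(u,X^ku)=\omega(X^ju,X^ju)=0$.
\item If $k=2j+1$, then self-adjointness gives $\omega(X^ju,X^{j+1}u)=\omega(X^{j+1}u,X^ju)$, while alternation makes this its own negative. Since $\operatorname{char}\FF\ne2$, it vanishes.
\end{itemize}

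\emph{Induction on $n$.} The case $n=0$ is trivial.
\begin{enumerate}
\item Choose $u\in V$ whose annihilator in $\FF[t]$ is the minimal polynomial $f$ of $X$; such a vector exists by the rational canonical form. Let $d:=\deg f$ and $W:=\FF[X]u$, so that $\dim W=d$.
\item By nondegeneracy of $\omega$, choose $v\in V$ with
\[
\omega(X^iu,v)=0 \quad (i<d-1), \qquad \omega(X^{d-1}u,v)=1 .
\]
\item Consider the $d\times d$ matrix $\bigl(\omega(X^iu,X^jv)\bigr)=\bigl(\omega(X^{i+j}u,v)\bigr)$ for $0\le i,j<d$. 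It is a Hankel matrix that is zero above the antidiagonal and has ones on the antidiagonal, so it is invertible. Hence $v,Xv,\dots,X^{d-1}v$ are linearly independent.
\item Since $f(X)=0$, we have $\dim\FF[X]v\le d$. Therefore $W':=\FF[X]v$ has dimension exactly $d$, is $X$-invariant, and is totally isotropic by the key observation.
\item By step~3, $\omega$ pairs $W$ and $W'$ nondegenerately. Together with isotropy this gives $W\cap W'=0$, and $N:=W\oplus W'$ is a nondegenerate $X$-invariant subspace.
\item Because $X$ is $\omega$-self-adjoint, $N^{\perp}$ is $X$-invariant, and $V=N\oplus N^\perp$ with $N^\perp$ nondegenerate of dimension $2(n-d)$. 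Identify $N^\perp$ with $\FF^{2(n-d)}$ via a symplectic basis. The induction hypothesis gives an invariant Lagrangian splitting $N^\perp=L_0\oplus L_0'$ as above.
\item Set $L:=W\oplus L_0$ and $L':=W'\oplus L_0'$; these have the required properties, which completes the induction.
\end{enumerate}

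The main obstacle I anticipate is step~4: ensuring that the partner subspace $W'$ is simultaneously $X$-invariant, of the right dimension, and isotropic. The choice of $u$ with maximal local minimal polynomial, combined with the cyclic-isotropy observation, is designed to handle all three at once. The remaining steps are routine linear algebra.
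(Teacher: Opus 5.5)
Your proposal is correct and follows essentially the same route as the paper. Both arguments show that cyclic subspaces are totally isotropic, pair a maximal cyclic subspace $\FF[X]u$ with $\FF[X]v$ through the anti-triangular Gram matrix $(\omega(X^{i+j}u,v))$, recurse on the $X$-invariant nondegenerate complement, and finish with an $\omega$-dual basis. The only differences are minor: you run a formal induction through a symplectic basis of $N^\perp$ where the paper simply repeats the construction, and no sign or reordering is actually needed, since the Gram matrix in the basis $(f_1,\dots,f_n,g_1,\dots,g_n)$ is exactly $R_n$.
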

	
	\begin{proof}
		Define the bilinear form
		\[
		\omega:\FF^{2n}\times\FF^{2n}\to\FF, \qquad \omega(u,v):=u^tR_nv.
		\]
		By the definition of $R_n$, the form $\omega$ is alternating and nondegenerate, and \eqref{eq:symplectic-model} says that $X$ is self-adjoint with respect to $\omega$. For every $u\in\FF^{2n}$, the subspace $\Span\{u,Xu,X^2u,\ldots\}$ is totally isotropic. Indeed, for all $i,j\ge0$,
		\[
		\omega(X^iu,X^ju)=\omega(u,X^{i+j}u)=-\omega(X^{i+j}u,u)=-\omega(u,X^{i+j}u),
		\]
		so $\omega(X^iu,X^ju)=0$ since $\operatorname{char}\FF\ne2$.
		
		Let $d$ be the degree of the minimal polynomial of $X$. By rational canonical form, there exists $u\in\FF^{2n}$ such that $u,Xu,\ldots,X^{d-1}u$ are linearly independent. Set
		\[
		U:=\Span\{u,Xu,\ldots,X^{d-1}u\}.
		\]
		Since $\omega$ is nondegenerate, the linear functionals $y\mapsto\omega(X^iu,y)$, $0\le i<d$, are linearly independent. Hence there exists $v\in\FF^{2n}$ such that $\omega(X^iu,v)=\delta_{i,d-1}$ for $0\le i<d$. Set
		\[
		W:=\Span\{v,Xv,\ldots,X^{d-1}v\}.
		\]
		Since the minimal polynomial of $X$ has degree $d$, both $U$ and $W$ are $X$-invariant, and by the first paragraph they are totally isotropic. Moreover, $\omega(X^iu,X^jv)=\omega(X^{i+j}u,v)$. Hence this value is zero when $i+j<d-1$ and is equal to $1$ when $i+j=d-1$. Thus, after reversing the columns, the matrix
		$(\omega(X^iu,X^jv))_{0\le i,j<d}$ is triangular with diagonal entries equal to $1$. It follows that $v,Xv,\ldots,X^{d-1}v$ are linearly independent and that the pairing between $U$ and $W$ is nondegenerate. Consequently, $U\cap W=0$ and the restriction of $\omega$ to $U\oplus W$ is nondegenerate.
		
		The subspace of vectors $\omega$-orthogonal to $U\oplus W$ is $X$-invariant, since $X$ is self-adjoint and $U\oplus W$ is $X$-invariant. Since the restriction of $\omega$ to $U\oplus W$ is nondegenerate, its orthogonal complement is also nondegenerate. We may therefore repeat the construction inside that complement. At each stage, the new pair of totally isotropic subspaces is orthogonal to all previously constructed pairs. Hence the direct sums of the first components and of the second components remain totally isotropic and $X$-invariant. Continuing until the orthogonal complement is zero yields $X$-invariant totally isotropic subspaces $L$ and $M$ such that $\FF^{2n}=L\oplus M$. At each stage the two added summands have the same dimension, so $\dim L=\dim M=n$.
		
		Choose a basis $\{e_1,\ldots,e_n\}$ of $L$. Since the pairing between $L$ and $M$ induced by $\omega$ is nondegenerate, there is a basis $\{f_1,\ldots,f_n\}$ of $M$ such that $\omega(e_i,f_j)=\delta_{ij}$. Since $\FF^{2n}=L\oplus M$, the vectors $e_1,\ldots,e_n,f_1,\ldots,f_n$ form a basis of $\FF^{2n}$. As $L$ and $M$ are totally isotropic, the matrix of $\omega$ with respect to the ordered basis $(e_1,\ldots,e_n,f_1,\ldots,f_n)$ is $R_n$.
		
		Let $S\in M_{2n}(\FF)$ be the matrix whose columns, with respect to the standard basis of $\FF^{2n}$, are $e_1,\ldots,e_n,f_1,\ldots,f_n$. Then $S^tR_nS=R_n$, so $S$ is symplectic. Since $L$ and $M$ are $X$-invariant, the matrix of $X$ in this basis has the form $S^{-1}XS=\diag(A,B)$ for some $A,B\in M_n(\FF)$. The self-adjointness of $X$ with respect to $\omega$ gives $\diag(A,B)^tR_n=R_n\diag(A,B)$, and hence $B=A^t$. This proves
		\eqref{eq:symplectic-normal-form}.
	\end{proof}
	
	\begin{theorem}\label{thm:symplectic}
		Let $\FF$ be a field with $\operatorname{char}\FF\ne2$, and let $n\ge2$. Then
		\[
		\PP(H_{2n}(\FF,\mathrm{sp}))=\left(H_{2n}(\FF,\mathrm{sp})\setminus\FF I_{2n}\right)\cup\{0,I_{2n}\},
		\]
		and
		\begin{equation}\label{eq:symplectic-width}
			1+\ceil{\log_2n} \le w(H_{2n}(\FF,\mathrm{sp}))  \le w(M_n(\FF)^+).
		\end{equation}
		In particular, the maximum length grows logarithmically with $n$.
	\end{theorem}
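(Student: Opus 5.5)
The plan is to transport the problem to $M_n(\FF)^+$ through the normal form of Lemma~\ref{lem:symplectic-normal-form}. Consider the map
\[
\Phi:M_n(\FF)^+\to H_{2n}(\FF,\mathrm{sp}),\qquad \Phi(A):=\diag(A,A^t).
\]
First I check that $\Phi(A)$ lies in $H_{2n}(\FF,\mathrm{sp})$. We have $\diag(A,A^t)^tR_n=\diag(A^t,A)R_n$, and both sides of $\diag(A^t,A)R_n=R_n\diag(A,A^t)$ are equal to the block matrix with off-diagonal blocks $A^t$ and $-A^t$. Next, $\Phi$ is a Jordan homomorphism: it is $A\mapsto\diag(A,0)+\diag(0,A^t)$, and transposition is an anti-automorphism, so
\[
\Phi(A\circ B)=\diag\!\bigl(A\circ B,\,A^t\circ B^t\bigr).
\]
The map $\Phi$ is also injective and unital, so it sends idempotents to idempotents and words to words.

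Now let $X\in H_{2n}(\FF,\mathrm{sp})$ be nonscalar. By Lemma~\ref{lem:symplectic-normal-form} there is a symplectic $S$ with $S^{-1}XS=\Phi(A)$, and $A$ is nonscalar because $X$ is. Conjugation by a symplectic matrix is a Jordan automorphism of $H_{2n}(\FF,\mathrm{sp})$, so it preserves length. By Theorem~\ref{thm:matrices}, $A\in\PP(M_n(\FF)^+)$ with $\ell(A)\le w(M_n(\FF)^+)$. Applying $\Phi$ gives
\[
\ell_{H_{2n}(\FF,\mathrm{sp})}(X)=\ell(\Phi(A))\le\ell_{M_n(\FF)^+}(A)\le w(M_n(\FF)^+).
\]
Combined with Lemma~\ref{lem:scalars}, this gives both the classification of $\PP(H_{2n}(\FF,\mathrm{sp}))$ and the upper bound in \eqref{eq:symplectic-width}.

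For the lower bound I use Remark~\ref{rem:pointwise-lower}, since $H_{2n}(\FF,\mathrm{sp})$ is a unital Jordan subalgebra of $M_{2n}(\FF)^+$. Pick $\lambda\notin\{0,1\}$ and set $C:=\Phi(\lambda I_n+E_{11})=\lambda I_{2n}+E_{11}+E_{n+1,n+1}$. Then $C$ is nonscalar, so $C\in\PP$, and $\rank(C-\lambda I_{2n})=2$. The remark therefore gives
\[
\ell(C)\ge 1+\ceil{\log_2(2n/2)}=1+\ceil{\log_2 n}.
\]
The logarithmic growth claim then follows from \eqref{eq:matrix-width}.

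Given Lemma~\ref{lem:symplectic-normal-form}, none of these steps is a real obstacle. The only delicate points are the routine checks that $\Phi(A)$ lies in $H_{2n}(\FF,\mathrm{sp})$, that $\Phi$ preserves the Jordan product, and that rank is counted as $2$ rather than $1$ in the lower bound. That last point is why the bound comes out as $\log_2 n$ and not $\log_2 2n$.
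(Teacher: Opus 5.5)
Your proof is correct and follows the paper's argument: the embedding $\Phi(A)=\diag(A,A^t)$ together with the symplectic normal form gives the classification and the upper bound, and the rank inequality of Lemma~\ref{lem:rank} (via Remark~\ref{rem:pointwise-lower}) applied to the rank-two perturbation $\Phi(\lambda I_n+E_{11})$ gives the lower bound; the paper uses $\Phi(\lambda I_n+E_{12})$ there instead, which makes no difference. One trivial slip: the common value of $\diag(A^t,A)R_n$ and $R_n\diag(A,A^t)$ has off-diagonal blocks $A^t$ and $-A$, not $A^t$ and $-A^t$, and this does not affect the conclusion.
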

	\begin{proof}
		The map
		\[
		\Phi:M_n(\FF)^+\to H_{2n}(\FF,\mathrm{sp}), \qquad \Phi(A):=\diag(A,A^t),
		\]
		is an injective unital Jordan homomorphism. Let
		$X\in H_{2n}(\FF,\mathrm{sp})$ be nonscalar. By
		Lemma~\ref{lem:symplectic-normal-form}, there exist a symplectic matrix $S\in M_{2n}(\FF)$ and a nonscalar matrix $A\in M_n(\FF)$ satisfying \eqref{eq:symplectic-normal-form}. By Theorem~\ref{thm:matrices}, $A$ has an idempotent word of length at most $w(M_n(\FF)^+)$. Applying $\Phi$ to its factors and conjugating them by $S$ gives a word for $X$ of the same length. Together with Lemma~\ref{lem:scalars}, this proves the asserted classification and the upper bound in \eqref{eq:symplectic-width}.
		
		For the lower bound, choose $\lambda\in\FF\setminus\{0,1\}$ and set
		$X:=\Phi(\lambda I_n+E_{12})$. Then
		$\rank(X-\lambda I_{2n})=2$. If $X$ has an idempotent word of length
		$m$ in $H_{2n}(\FF,\mathrm{sp})$, then, viewing the same word in
		$M_{2n}(\FF)^+$,
		\[
		2n\stackrel{\eqref{eq:rank-product}}{\le} 2^{m-1}\rank(X-\lambda I_{2n})=2^{m-1}\cdot2.
		\]
		Hence $m\ge1+\ceil{\log_2n}$.
	\end{proof}
	
	Over quadratically closed fields, Theorem~\ref{thm:spin}\textup{(a)}, applied to the spin-factor identification in Section~\ref{subsec:models}, gives $w(H_4(\FF,\mathrm{sp}))=3$.
	
	We now determine the idempotent words in the split Albert algebra $\mathcal A$ from \eqref{eq:albert-model}. We use the following embedding theorem, due to Jacobson in characteristic different from $2$ \cite[Theorem~IX.11]{Jacobson1968}. Petersson later extended it to arbitrary characteristic \cite[Theorem~6.3]{Petersson2015}.
	
	\begin{theorem}[Embedding theorem]\label{thm:albert-embedding}
		Let $\mathcal A$ be a split Albert algebra over $\FF$. Every $x\in\mathcal A$ is contained in a Jordan subalgebra $\mathcal B\subseteq\mathcal A$ with the same unit as $\mathcal A$ and isomorphic to $M_3(\FF)^+$.
	\end{theorem}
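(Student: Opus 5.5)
This is Jacobson's theorem, and we shall use it as cited. If I had to reprove it in the present setting, I would proceed as follows.

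The first step is to reduce $x$ to a normal form by automorphisms of $\mathcal A=H_3(\mathbb O_s)$. Write
\[
x=\begin{bmatrix}\alpha&a&b\\ \bar a&\beta&c\\ \bar b&\bar c&\gamma\end{bmatrix}.
\]
A Jordan subalgebra with the same unit that is isomorphic to $M_3(\FF)^+$ is obtained as follows. Take a two-dimensional split composition subalgebra $K=\FF e\oplus\FF\bar e\subseteq\mathbb O_s$, where $e$ and $\bar e$ are orthogonal idempotents with $e+\bar e=1$. Then $H_3(K)\cong M_3(\FF)^+$ via the standard identification $H_3(\FF\times\FF)\cong M_3(\FF)^+$. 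This identification sends a hermitian matrix with entries $\lambda e+\mu\bar e$ to the matrix having $\lambda$ above the diagonal and $\mu$ below it. More generally, any conjugate of $H_3(K)$ by an automorphism of $\mathcal A$ has the same property. So it suffices to find an automorphism $\varphi$ of $\mathcal A$ fixing $I_3$ such that all off-diagonal entries of $\varphi(x)$ lie in a common split subalgebra $K$.

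The second step uses the automorphisms of $\mathcal A$ induced by isotopies and reflections of $\mathbb O_s$. These act on the three off-diagonal positions by triality, $(a,b,c)\mapsto(\sigma_1a,\sigma_2b,\sigma_3c)$ with $(\sigma_1,\sigma_2,\sigma_3)$ a related triple. With them I would move the entries so that $a$ and $b$ lie in a common two-dimensional subalgebra. By Artin's theorem, $a$ and $b$ generate an associative subalgebra, which is contained in a quaternion subalgebra $Q\cong M_2(\FF)$ because $\mathbb O_s$ is split. Next I would use the reflection automorphisms of the form $u\mapsto$ conjugation by unit-norm elements of $Q$, together with elementary Jordan automorphisms $U$-operators of the form $\exp$ of a nilpotent derivation $D_{p,q}$. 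With these I would aim to bring $c$ into $Q$ while keeping $a$ and $b$ there. This places $x$ in $H_3(Q)\cong H_6(\FF,\mathrm{sp})$. Then Lemma~\ref{lem:symplectic-normal-form} conjugates $x$, inside $H_6(\FF,\mathrm{sp})$, into $\Phi(M_3(\FF)^+)=\{\diag(A,A^t)\}$. The image $\Phi(M_3(\FF)^+)$ is a unital copy of $M_3(\FF)^+$, and its conjugate under a symplectic similarity is again a subalgebra of $H_3(Q)\subseteq\mathcal A$. This finishes the argument. Since symplectic similarities of $H_6(\FF,\mathrm{sp})$ are Jordan automorphisms of that subalgebra, no extension to all of $\mathcal A$ is needed.

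The main obstacle is the middle step: putting all three off-diagonal entries into a single quaternion subalgebra by automorphisms of $\mathcal A$, over an arbitrary field of characteristic different from $2$. What I would try first is to exploit the Peirce decomposition relative to the diagonal idempotent $E_{11}$. I would apply a Peirce reflection or an exponential of $\mathcal A_{1/2}(E_{11})$ to make the entry $c$ in the $(2,3)$ slot lie in the subalgebra generated by $a$ and $b$, using the cubic generic minimal polynomial of $x$ to control the computation. If that fails over small or non-closed fields, I would fall back on Jacobson's original argument via reduced elements. In that argument, one first shows that $\FF[x]$ embeds in a split cubic étale subalgebra $\FF^3$ after enlarging to a suitable frame of idempotents. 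One then coordinatizes relative to that frame.
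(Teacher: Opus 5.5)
Invoking Jacobson's theorem is exactly what the paper does. It gives no argument of its own: it cites Jacobson (Theorem IX.11, characteristic $\ne 2$) and notes Petersson's extension to arbitrary characteristic. Read as ``use the cited result,'' your proposal agrees with the paper.

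Your reproof sketch, however, is not a proof, and parts of it are wrong.

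\emph{What is sound.} The final step works: if $x\in H_3(Q)$ for a \emph{split} quaternion subalgebra $Q\subseteq\mathbb O_s$, then $H_3(Q)\cong H_6(\FF,\mathrm{sp})$, and Lemma~\ref{lem:symplectic-normal-form} gives a unital copy of $M_3(\FF)^+$ containing $x$.

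\emph{The main gap.} That step only reduces the theorem to showing that $x$ lies in some unital copy of $H_6(\FF,\mathrm{sp})$ inside $\mathcal A$. The step meant to achieve this is bringing $c$ into the same split $Q$ as $a$ and $b$ by automorphisms, over an arbitrary field and including all isotropic and degenerate configurations. That is the entire content of the theorem, and you do not carry it out.

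\emph{Splitness of $Q$.} This is essential, and ``because $\mathbb O_s$ is split'' does not justify it. Over $\mathbb{R}$, $\mathbb O_s$ contains Hamilton's quaternions $\mathbb{H}$. Moreover $H_3(\mathbb{H})$ is formally real, so it contains no copy of $M_3(\mathbb{R})^+$, in which $E_{12}\circ E_{12}=0$.

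\emph{The fallback is false as stated.} If $\FF[x]$ lies in a split cubic \'etale subalgebra $\FF e_1+\FF e_2+\FF e_3$, then the minimal polynomial of $x$ splits into distinct linear factors over $\FF$. This fails for the image of $E_{12}$, or of the companion matrix of an irreducible cubic, under a unital embedding $M_3(\FF)^+\hookrightarrow\mathcal A$. So that route could at best handle the ``diagonalizable'' elements, and it misses exactly the cases where the theorem has content.
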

	
	\begin{proposition}\label{prop:albert}
		Let $\FF$ be a field with $\operatorname{char}\FF\ne2$. For the split Albert algebra $\mathcal A$, we have
		\[
		\PP(\mathcal A)=(\mathcal A\setminus\FF1)\cup\{0,1\},
		\qquad w(\mathcal A)\le 10.
		\]
		If $\FF$ is quadratically closed, then $w(\mathcal A)\le 7$.
	\end{proposition}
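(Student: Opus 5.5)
The plan is to reduce everything to the full matrix case via the embedding theorem. Let $x\in\mathcal A$ be nonscalar. By Theorem~\ref{thm:albert-embedding}, there is a Jordan subalgebra $\mathcal B\subseteq\mathcal A$ containing $x$, sharing the unit $1$ of $\mathcal A$, and a Jordan isomorphism $\psi:M_3(\FF)^+\to\mathcal B$. Since $\psi$ is unital, $\psi(\FF I_3)=\FF1$. Hence $A:=\psi^{-1}(x)$ is a nonscalar matrix in $M_3(\FF)$.

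By Theorem~\ref{thm:matrices}, $A$ has an idempotent word in $M_3(\FF)^+$ of length at most $w(M_3(\FF)^+)\le 2+5\ceil{\log_2 3}=12$. This is weaker than the claimed bound, so I will not quote the width bound directly. Instead I will redo the estimate from the proof of Theorem~\ref{thm:matrices} for $n=3$, splitting according to the rational canonical form of $A$.

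\textbf{Case 1: $A$ is cyclic.} Proposition~\ref{prop:cyclic} gives length at most $7$.

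\textbf{Case 2: $A$ is not cyclic.} Since $A$ is nonscalar, its invariant factors are $f_1$ of degree one and $f_2$ of degree two. So $A$ is similar to $[\lambda]\oplus C$ with $C\in M_2(\FF)$ a nonscalar companion block. Proposition~\ref{prop:exact-two} gives $\ell(C)\le5$. Lemma~\ref{lem:padding} with $r=2$, $s=1$ and $c=5$ then gives at most $5+5\ceil{\log_2(3/2)}=10$. Over a quadratically closed field, $\ell(C)\le3$ and $c=3$ by Corollary~\ref{cor:two-by-two}, giving at most $3+3=6$.

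In both cases $\ell_{M_3(\FF)^+}(A)\le 10$ in general, and $\le7$ over quadratically closed fields (Case 1 contributes $7$, Case 2 contributes $6$).

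Applying $\psi$ to the factors of such a word yields idempotents of $\mathcal B\subseteq\mathcal A$. Since $\psi$ is a Jordan homomorphism, the resulting word has value $x$ in $\mathcal A$ and the same length. So $\ell_{\mathcal A}(x)\le\ell_{M_3(\FF)^+}(A)$, which gives the two width bounds. Together with Lemma~\ref{lem:scalars}, which excludes the scalars other than $0$ and $1$, this gives the description of $\PP(\mathcal A)$.

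The main point to check is that the embedding in Theorem~\ref{thm:albert-embedding} is unital, so that nonscalar elements pull back to nonscalar matrices; this is exactly what the statement ``with the same unit'' guarantees. The only other step needing care is the bookkeeping in Lemma~\ref{lem:padding} for $r=2$, $s=1$. There the condition $s\le r$ holds, so a single padding step of cost $c$ suffices.
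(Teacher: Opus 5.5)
Your proposal is correct and follows the paper's proof essentially step for step. You use the unital embedding of Theorem~\ref{thm:albert-embedding} into a copy of $M_3(\FF)^+$. You then treat the cyclic case by Proposition~\ref{prop:cyclic}, giving seven factors, and the non-cyclic case $[\lambda]\oplus C$ by Proposition~\ref{prop:exact-two} and Lemma~\ref{lem:padding}, giving ten factors, or six over quadratically closed fields; Lemma~\ref{lem:scalars} disposes of the scalars.
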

	
	\begin{proof}
		Let $x\in\mathcal A$ be nonscalar. By Theorem~\ref{thm:albert-embedding}, $x$ belongs to a Jordan subalgebra $\mathcal B\subseteq\mathcal A$ with the same unit as $\mathcal A$ and isomorphic to $M_3(\FF)^+$. Hence $x$ is nonscalar in $\mathcal B$. Let $X\in M_3(\FF)$ be its image under such an isomorphism.
		
		If $X$ is cyclic, Proposition~\ref{prop:cyclic} gives an idempotent word of length at most seven. Otherwise, $X$ has at least two invariant factors. Since their positive degrees sum to $3$, the only possibilities are $1+2$ and $1+1+1$. The latter cannot occur, since divisibility would then force the three monic linear invariant factors to coincide and make $X$ scalar. Hence the degrees are $1+2$, and rational canonical form shows that $X$ is similar to a block diagonal matrix $\diag(C,[\lambda])$, where $C\in M_2(\FF)$ is cyclic and $\lambda\in\FF$. By similarity invariance, Proposition~\ref{prop:exact-two} and Lemma~\ref{lem:padding} give an idempotent word of length at most ten, or at most six if $\FF$ is quadratically closed. Thus every nonscalar element of $\mathcal A$ has an idempotent word with the stated bound. Finally, $0$ and $1$ are idempotents, while Lemma~\ref{lem:scalars} excludes every other scalar element.
	\end{proof}
	
	\begin{proof}[Proof of Theorem~\ref{thm:classification}]
		By Theorem~\ref{thm:structure}, it suffices to consider the six families listed there. Lemma~\ref{lem:scalars} handles the scalar algebra. Theorem~\ref{thm:spin} treats all spin factors. Since $\FF$ is algebraically closed, every nondegenerate symmetric bilinear form on a
		two-dimensional $\FF$-vector space admits an orthonormal basis. Hence every spin factor with $\dim V=2$ is isomorphic to $H_2(\FF)$ via the spin-factor identification in the proof of
		Corollary~\ref{cor:two-by-two}; for these, that corollary gives the description and the exact four-factor statement. The full matrix, symmetric matrix and symplectic Hermitian cases follow from Theorems~\ref{thm:matrices}, \ref{thm:symmetric}, and \ref{thm:symplectic}, respectively. Proposition~\ref{prop:albert} completes the split Albert case.
	\end{proof}
	
	\begin{corollary}\label{cor:arbitrary-bracketing}
		Let $\FF$ be algebraically closed with $\operatorname{char}\FF\ne2$, and let
		$\J$ be a finite-dimensional simple Jordan algebra over $\FF$. If
		$\dim\J>1$, then every element of $\J$ is the value of a product of
		idempotents with some bracketing. If $\dim\J=1$, the only such values are
		$0$ and $1$.
	\end{corollary}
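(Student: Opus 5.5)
The plan is to reduce everything to Theorem~\ref{thm:classification} and then handle the two kinds of elements it leaves out: the nonidempotent scalars and, in $H_2(\FF)$, the nonzero square-zero elements. Under arbitrary bracketing, any product $u\circ v$ with $u,v$ themselves bracketed idempotent products counts as a value, so the set of values is closed under Jordan multiplication and contains $\PP(\J)$.

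\emph{The case $\dim\J=1$.} Here $\J=\FF$, and the only idempotents are $0$ and $1$. Every product of these, in any bracketing, lies in $\{0,1\}$, so this case is immediate.

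\emph{Scalars when $\dim\J>1$.} We must produce every $\lambda1$ with $\lambda\ne0$. The idea is to take a nonscalar $x$ and write $\lambda1=x\circ y$ with both $x$ and $y$ obtainable.

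\begin{itemize}
\item If $\J$ is not a spin factor, choose a nontrivial idempotent $e$. Then $x:=\lambda e+\frac{\lambda}{2}(1-e)$ is invertible and nonscalar. Its inverse $y:=\lambda^{-1}e+2\lambda^{-1}(1-e)$ is also nonscalar. Since $x$ and $y$ lie in the associative subalgebra spanned by $e$ and $1-e$, we get $x\circ y=1$. Replacing $x$ by $\lambda x$ gives $\lambda1$ directly, and $\lambda x$ is still nonscalar, so it is a value.
\item The same construction works in every algebra of dimension greater than $1$ under the classification, because each such algebra has a nontrivial idempotent. In a spin factor these are the $e_t$; in the matrix families and in $\mathcal A$ take a diagonal matrix unit.
\end{itemize}

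In every algebra except $H_2(\FF)$, Theorem~\ref{thm:classification}(a) shows that all nonscalar elements are left-nested words. Hence $\lambda1=(\lambda x)\circ y$ is a value of a bracketed product of idempotents. Together with the nonscalars, this gives all of $\J$.

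\emph{The case $H_2(\FF)$.} Here the elements $\lambda e+\frac{\lambda}{2}(1-e)$ and their inverses are diagonal with distinct eigenvalues. They therefore square to nonzero elements, and part~(b) of Theorem~\ref{thm:classification} covers them, so the scalars follow as above. It remains to obtain a nonzero $N$ with $N^2=0$. I would write $N=x\circ y$ with $x$ and $y$ both representable. For example, in the spin model with hyperbolic basis $\{u,v\}$, take $N=u$. Then
\[
(\alpha1+u)\circ(\beta1+\gamma u)=(\alpha\beta)1+(\alpha\gamma+\beta)u,
\]
so choosing $\beta=0$, $\alpha\gamma=1$ and $\alpha\ne0$ gives $u$. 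Both factors must be checked against~(b). The factor $\alpha1+u$ with $\alpha\ne0$ has square $\alpha^2 1+2\alpha u\ne0$. The factor $\gamma u$, however, is itself square-zero, so this choice fails and must be adjusted.

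The fix is to use an anisotropic direction. Take $x=\alpha1+u+sv$ and $y=\beta1+tu+rv$ with small corrections. Cancelling the scalar term $\alpha\beta+b(u+sv,tu+rv)$ and the $v$-coefficient, while keeping the $u$-coefficient equal to $1$, is a small polynomial system in the free parameters. With enough freedom it should have solutions in which neither $x$ nor $y$ is scalar or square-zero. Over an algebraically closed field this should be routine to arrange.

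The main obstacle is this last step: exhibiting explicit representable factors whose product is the square-zero element. I would first try $x=e_t$, a nontrivial idempotent, and solve $e_t\circ y=N$ for $y$. Such a $y$ is determined up to the kernel of multiplication by $e_t$, so there is enough freedom to avoid $y^2=0$.
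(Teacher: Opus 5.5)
\paragraph{Overall.} Your treatment of the one-dimensional case and of the nontrivial scalars is fine. Writing $\lambda1=(\lambda x)\circ x^{-1}$ with $x$ a nonscalar invertible element of $\Span\{e,1-e\}$ is essentially the paper's identity $(1+e)\circ\lambda(1-\frac12e)=\lambda1$, and your check that both factors have nonzero square in $H_2(\FF)$ is correct. There is one small slip: $E_{11}\notin H_{2n}(\FF,\mathrm{sp})$, so you should use $E_{11}+E_{n+1,n+1}$, the image of $E_{11}\in M_n(\FF)$ under $A\mapsto\diag(A,A^t)$. The genuine gap is in the nonzero square-zero elements of $H_2(\FF)$, where the argument is not finished.

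\paragraph{The gap.} You never produce the required factorization, and the route you say you would try first cannot work. In the spin model, for any $y=\beta1+w$ one has
$e_t\circ y=\bigl(\frac\beta2+b(t,w)\bigr)1+\frac12w+\beta t$.
Equating this with $u$ forces $w=2u-2\beta t$. The scalar coefficient then becomes $2b(t,u)$, so $t\in u^\perp=\FF u$, which contradicts $q(t)=\frac14$. Thus $u$ lies in the image of multiplication by no nontrivial idempotent; this is exactly the obstruction in the proof of Theorem~\ref{thm:spin}\textup{(b)} that excludes $u$ from $\PP$. So the equation $e_t\circ y=u$ is inconsistent, and there is no ``freedom up to the kernel'' to exploit.

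\paragraph{The fix.} Your other ansatz does work, but ``should be routine'' is not a proof. Write $x=\alpha1+u+sv$ and $y=\beta1+t'u+rv$ (I rename your $t$ to avoid a clash). You must actually solve
\[
\alpha\beta+r+st'=0,\qquad \alpha t'+\beta=1,\qquad \alpha r+\beta s=0,
\]
with neither factor scalar or square-zero. For instance, $\alpha=\beta=s=1$, $t'=0$, $r=-1$ gives
\[
(1+u+v)\circ(1-v)=\bigl(1+b(u+v,-v)\bigr)1+(-v)+(u+v)=u.
\]
Both factors have the form $1+(\text{nonzero vector})$, so they lie in $\PP(H_2(\FF))$ by Theorem~\ref{thm:spin}\textup{(b)}. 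The paper makes the analogous choice $(1+z)\circ(-1+u+z)=u$ with $\{u,z\}$ hyperbolic. Once you supply such an explicit pair, your argument is complete.
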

	
	\begin{proof}
		The one-dimensional case is immediate, so assume that $\dim\J>1$. By Theorem~\ref{thm:classification}, the only elements not already represented by left-nested idempotent words are the nontrivial scalar elements and, when $\J\cong H_2(\FF)$, the nonzero square-zero elements.
		
		By Theorem~\ref{thm:structure}, $\J$ contains a nontrivial idempotent $e$. Let $\lambda\in\FF\setminus\{0,1\}$. The elements
		\[
		1+e
		\qquad\text{and}\qquad
		\lambda\left(1-\frac12e\right)
		\]
		are nonscalar. If $\J\cong H_2(\FF)$, neither is square-zero, since
		\[
		(1+e)^2=1+3e, \qquad
		\left(\lambda\left(1-\frac12e\right)\right)^2  =\lambda^2\left(1-\frac34e\right),
		\]
		and both squares are nonzero because $e$ is nontrivial. Hence Theorem~\ref{thm:classification} gives left-nested idempotent words with these two values. Bracketing the two words together yields
		\[
		(1+e)\circ\lambda\left(1-\frac12e\right)=\lambda1.
		\]
		Since $0$ and $1$ are themselves idempotents, this represents every scalar element of $\J$.
		
		It remains to consider a nonzero $N\in H_2(\FF)$ with $N^2=0$. Under the spin-factor identification from the proof of Corollary~\ref{cor:two-by-two}, $H_2(\FF)$ identifies with $\mathcal S(V,b)$, where $V$ is the two-dimensional space of traceless symmetric matrices, and $N$ corresponds to a nonzero isotropic vector $u\in V$. Extend $\{u\}$ to a hyperbolic basis $\{u,z\}$ of $V$. Since $z$ and $u+z$ are nonzero, Theorem~\ref{thm:spin} \textup{(b)} shows that both $1+z$ and $-1+u+z$ are values of left-nested idempotent words. By \eqref{eq:spin-product},
		\[
		(1+z)\circ(-1+u+z)=u.
		\]
		Bracketing these two words together therefore gives a product of idempotents with value $N$.
	\end{proof}
	
	Recall that an ideal $\mathcal I$ of a finite-dimensional Jordan algebra $\J$ over $\FF$ is \emph{nilpotent} if, for some integer $s\ge1$, every product of at least $s$ elements of $\mathcal I$ vanishes, however bracketed. The largest such ideal is the \emph{radical} of $\J$, and $\J$ is called \emph{semisimple} if its radical is zero. Over an algebraically closed field of characteristic different from $2$, $\J$ is semisimple if and only if it is a direct sum of simple ideals; see~\cite[Part~I, Section~2.13, Renaissance Structure Theorem]{McCrimmon2004}. Together with Lemma~\ref{lem:products} and Theorem~\ref{thm:classification}, this decomposition gives the following corollary.
	
	\begin{corollary}\label{cor:semisimple}
		Let $\FF$ be algebraically closed with $\operatorname{char}\FF\ne2$, and let
		$\J$ be a finite-dimensional semisimple Jordan algebra. Write $\J=\J_1\oplus\cdots\oplus\J_s$ as a direct sum of simple ideals. Then
		$x=x_1+\cdots+x_s$, with $x_i\in\J_i$, belongs to $\PP(\J)$ if and only if
		each $x_i$ satisfies the condition in Theorem~\ref{thm:classification}
		for $\J_i$. In that case,
		\[
		\ell_\J(x)=\max_{1\le i \le s}\ell_{\J_i}(x_i),  \qquad   w(\J)=\max_{1 \le i \le s}w(\J_i).
		\]
	\end{corollary}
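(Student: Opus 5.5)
The plan is to reduce everything to Lemma~\ref{lem:products} through the identification of $\J$ with the product $\prod_{i=1}^s\J_i$. Since the $\J_i$ are ideals with $\J_i\circ\J_j=0$ for $i\ne j$, the map $x\mapsto(x_1,\ldots,x_s)$ is a Jordan isomorphism from $\J$ onto $\prod_i\J_i$ with componentwise product. Each simple ideal $\J_i$ is finite-dimensional, hence unital with unit $1_i$, and the unit of $\J$ is $1=1_1+\cdots+1_s$. The idempotents of $\J$ are exactly the sums $e_1+\cdots+e_s$ with each $e_i$ an idempotent of $\J_i$. Jordan isomorphisms preserve idempotent words and their lengths, so we have
\[
\PP(\J)=\prod_i\PP(\J_i)
\quad\text{and}\quad
\ell_\J(x)=\max_i\ell_{\J_i}(x_i).
\]

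The membership criterion then follows directly. Each $\J_i$ is a finite-dimensional simple Jordan algebra over the algebraically closed field $\FF$, so Theorem~\ref{thm:classification} describes $\PP(\J_i)$. It is $(\J_i\setminus\FF1_i)\cup\{0,1_i\}$ unless $\J_i\cong H_2(\FF)$, in which case the nonzero square-zero elements are also excluded. Thus $x\in\PP(\J)$ exactly when each $x_i$ meets the condition of that theorem for $\J_i$, and the length formula is the second part of Lemma~\ref{lem:products}.

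For the width, the inequality $w(\J)\le\max_iw(\J_i)$ follows at once from the length formula. For the reverse inequality, fix $i$ and take $x_i\in\PP(\J_i)$. Set $x_j:=1_j$ for every $j\ne i$, which is possible since $1_j\in\PP(\J_j)$ with length one. Then $x:=\sum_jx_j\in\PP(\J)$ and $\ell_\J(x)=\max\{\ell_{\J_i}(x_i),1\}=\ell_{\J_i}(x_i)$. Taking suprema over $x_i$ gives $w(\J)\ge w(\J_i)$, and this also covers the case where some width is infinite. The width identity follows.

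I expect no real obstacle. The only point needing care is that the unit of $\J$ decomposes as $\sum_i1_i$, which is needed so that Lemma~\ref{lem:products}, stated for unital factors, applies. This holds because each simple ideal is unital and is a direct factor of $\J$.
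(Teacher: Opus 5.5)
Your proposal is correct and follows the paper's own route. The paper likewise derives the corollary directly from the decomposition $\J\cong\prod_i\J_i$, together with Lemma~\ref{lem:products} and Theorem~\ref{thm:classification}. Your padding-by-units argument for $w(\J)\ge w(\J_i)$ simply spells out a step the paper leaves implicit in the max formula for lengths.
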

	
	\begin{remark}\label{rem:quadratic-scope}
		The conclusion of Theorem~\ref{thm:classification} need not hold over a
		quadratically closed field. Indeed, inside an algebraic closure of
		$\mathbb F_5$, set
		\[
		\mathbb E:=\bigcup_{r\ge0}\mathbb F_{5^{2^r}}.
		\]
		Every quadratic polynomial over $\mathbb E$ has its coefficients in some $\mathbb F_{5^{2^r}}$ and splits over $\mathbb F_{5^{2^{r+1}}}$, so $\mathbb E$ is quadratically closed. On the other hand, the cubic polynomial
		$f(t):=t^3+t+1\in\mathbb F_5[t]\subseteq\mathbb E[t]$ has no root in $\mathbb F_5$ and is therefore irreducible over $\mathbb F_5$. If $f$ had a root $\alpha\in\mathbb E$, then
		$\alpha\in\mathbb F_{5^{2^r}}$ for some $r$. Since $f$ is irreducible over $\mathbb F_5$, we would have $[\mathbb F_5(\alpha):\mathbb F_5]=3$, which would have to divide $2^r$, a contradiction. Thus $f$ has no root in $\mathbb E$ and, being cubic, is irreducible over $\mathbb E$.
		
		Let $(f)$ denote the principal ideal generated by $f$ in $\mathbb E[t]$. Since $f$ is irreducible over $\mathbb E$, the quotient $\mathcal K:=\mathbb E[t]/(f)$ is a field extension of $\mathbb E$ of degree $3$. Viewed as a Jordan algebra over $\mathbb E$ with its usual multiplication, $\mathcal K$ is three-dimensional and simple, and its only idempotents are $0$ and $1$. Consequently, $\PP(\mathcal K)=\{0,1\}$, although $\mathcal K\setminus\mathbb E1\ne\emptyset$.
	\end{remark}
	
	\begin{remark}\label{rem:real-closed}
		Let $\FF$ be real closed, endowed with its unique ordering, and let $n\ge2$. Recall that $A\in H_n(\FF)$ is \emph{positive definite} if $v^tAv>0$ for every nonzero $v\in\FF^n$. Let
		$A\in\PP(H_n(\FF))$ with $A\ne I_n$. After deleting trailing unit factors, write $A=X\circ E$ with $E\ne I_n$ an idempotent, taking $X=E=A$ when the representing word has length one. For every $v\in\ker E$,
		\[
		v^tAv=\frac12v^t(XE+EX)v=0.
		\]
		Since $\ker E\ne0$, no nonscalar positive definite matrix belongs to $\PP(H_n(\FF))$. In particular, $\diag(1,2,\ldots,2)\notin\PP(H_n(\FF))$.
	\end{remark}
	
	\subsection*{Funding}
	This research was supported by the European Union -- NextGenerationEU through the National Recovery and Resilience Plan 2021--2026, Institutional Grants of the University of Zagreb Faculty of Science (IK IA 1.1.3. Impact4Math, PMF-CROFUND).
	
	\subsection*{AI use disclosure}
	During the preparation of this manuscript, the authors used OpenAI's GPT-5.6 Sol for preliminary exploration of proof strategies and literature-search assistance, and OpenAI's GPT-6 Astra for suggestions on proof simplification and exposition, as well as for copyediting and proofreading. The authors reviewed and verified all AI-assisted material and take full responsibility for the final content.

\end{document}